\newtheorem{theorem}{Theorem}[section]
\newtheorem{conjecture}{Conjecture}[section]
\newtheorem{lemma}[theorem]{Lemma}
\newtheorem{corollary}[theorem]{Corollary}
\theoremstyle{definition}
\newtheorem{definition}[theorem]{Definition}
\newtheorem{remark}[theorem]{Remark}
\definecolor{A}{rgb}{.75,1,.75}
\numberwithin{equation}{section}
\begin{document}

\title[Presentation of Hecke-Hopf algebras for non-simply-laced type]{On the presentation of Hecke-Hopf algebras for non-simply-laced type}
\author[Weideng Cui]{Weideng Cui}
\address{School of Mathematics, Shandong University, Jinan, Shandong 250100, P.R. China.}
\email{cwdeng@amss.ac.cn}

\begin{abstract}
Hecke-Hopf algebras were defined by A. Berenstein and D. Kazhdan. We give an explicit presentation of an Hecke-Hopf algebra when the parameter $m_{ij},$ associated to any two distinct vertices $i$ and $j$ in the presentation of a Coxeter group, equals $4,$ $5$ or $6$. As an application, we give a proof of a conjecture of Berenstein and Kazhdan when the Coxeter group is crystallographic and non-simply-laced. As another application, we show that another conjecture of Berenstein and Kazhdan holds when $m_{ij},$ associated to any two distinct vertices $i$ and $j,$ equals $4$ and that the conjecture does not hold when some $m_{ij}$ equals $6$ by giving a counterexample to it.

In the second part of this paper, we define a new family of algebras, called generalized affine Hecke algebras, associated to affine Weyl groups such that the Hecke-Hopf algebras associated to finite Weyl groups and the Hecke algebras associated to affine Weyl groups are both subalgebras of the new kind of algebras. Finally, we study the representation theory of the generalized affine Hecke algebras.
\end{abstract}



\maketitle
\medskip
\section{Introduction}
Associated to each Coxeter group $W,$ we have an Iwahori-Hecke algebra $H_{q}(W),$ which has been extensively studied. Recently, Berenstein and Kazhdan [BK] defined a new family of Hopf algebras, called Hecke-Hopf algebras, such that $H_{q}(W)$ are left coideal subalgebras of them. Let us now recall their definitions.

Recall that a Coxeter group $W$ is generated by $s_i$ ($i\in I$) subject to relations $(s_is_j)^{m_{ij}}=1$,
where $m_{ij}=m_{ji}\in \mathbb{Z}_{\ge 0}$  are such that $m_{ij}=1$ if and only if $i=j$.

Given a Coxeter group $W=\langle s_i\:|\:i\in I\rangle,$ following [BK, Definition 1.13] we first define $\hat{\bf{H}}(W)$ to be the $\mathbb{Z}$-algebra generated by $s_i,D_i$ ($i\in I$) subject to the relations:

(i) Rank $1$ relations: $s_i^2=1$,  $D_i^2=D_i$, $s_iD_i+D_is_i=s_i-1$ for $i\in I$.

(ii) Coxeter relations: $(s_is_j)^{m_{ij}}=1.$

(iii) The linear braid relations: $\underbrace{D_is_js_i\cdots s_{j'}}_{m_{ij}} =\underbrace{s_j\cdots s_{i'}s_{j'}D_{i'}}_{m_{ij}}$
for all distinct $i,j\in I$ with $m_{ij}\ne 0$, where $i'=\begin{cases}
i & \text{if $m_{ij}$ is even}\\
j & \text{if $m_{ij}$ is odd}\\
\end{cases}$ and  $\{i',j'\}=\{i,j\}$.

By [BK, Theorem 1.15], $\hat{\bf{H}}(W)$ is a Hopf algebra with the coproduct $\Delta,$ the counit $\varepsilon,$ and the antipode $S.$

Set $\mathcal{S}:=\{ws_iw^{-1}\,|\,w\in W, i\in I\}$. Then $\mathcal{S}$ the set of all reflections in $W$. It is easy to see that linear braid relations in $\hat {\bf H}(W)$ imply that for any $s\in {\mathcal S},$ there is a unique element $D_s\in \hat {\bf H}(W)$ such that $D_{s_i}=D_i$ for $i\in I$ and $D_{s_iss_i}=s_iD_ss_i$ for any $i\in I$ and $s\in {\mathcal S}\setminus \{s_i\}$.

Let $\hat {\bf D}(W)$ be the subalgebra of $\hat {\bf H}(W)$ generated by all $D_s$ ($s\in {\mathcal S}$) and
${\bf K}(W):=\bigcap\limits_{w\in W} w\hat {\bf D}(W)w^{-1}$.

The algebra $\hat {\bf D}(W)$ can be viewed as naturally filtered by $\deg D_s=1$ for $s\in {\mathcal S}.$ For distinct $i,j\in I,$ we denote by ${\bf K}_{ij}(W)$ the set of all elements in ${\bf K}(W_{\{i,j\}})\cap Ker~\varepsilon\subset \hat {\bf D}(W_{\{i,j\}})$ having degree at most $m_{ij},$ where $W_{\{i,j\}}$ is the parabolic subgroup of $W$ generated by $s_i$ and $s_j.$

By [BK, Theorem 1.23] we have that the ideal $\underline {\bf J}(W),$ generated by  by all ${\bf K}_{ij}(W)$ ($i\ne j\in I$), is  a Hopf ideal, and thus the quotient algebra ${\bf H}(W)=\hat {\bf H}(W)/\underline {\bf J}(W)$ is a Hopf algebra.

${\bf H}(W)$ is defined to the Hecke-Hopf algebra of $W.$

The following theorem follows from [BK, Theorem 1.24].

\begin{theorem}
\label{introduction-theorem}
Suppose that $W$ is simply-laced, that is, $m_{ij}\in \{0,2,3\}$ for all distinct $i,j\in I$. Then the Hecke-Hopf algebra ${\bf H}(W)$ is generated by $s_i,D_i$ $(i\in I)$ subject to the relations:

$\bullet$
$s_i^2=1$,
$s_iD_i+D_is_i=s_i-1$, $D_i^2=D_i$ for $i\in I$

$\bullet$
$s_js_i=s_is_j$,
$D_js_i=s_iD_j$, $D_jD_i=D_iD_j$ if $m_{ij}=2$.

$\bullet$
$s_js_is_j=s_is_js_i$,
$D_is_js_i=s_js_iD_j$, $D_js_iD_j=s_iD_jD_i+D_iD_js_i+s_iD_js_i$ if $m_{ij}=3$.

\end{theorem}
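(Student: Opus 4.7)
The plan is to unpack the definition $\mathbf{H}(W)=\hat{\mathbf{H}}(W)/\underline{\mathbf{J}}(W)$ and to identify the generators of each $\mathbf{K}_{ij}(W)$ explicitly in the simply-laced case.

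I would begin by observing that the first two relations in each bullet of the theorem are already present in $\hat{\mathbf{H}}(W)$: the rank-$1$ relations are imposed by hand, and the Coxeter and linear braid relations specialize, at $m_{ij}=2$, to $s_is_j=s_js_i$ together with $D_is_j=s_jD_i$ and $D_js_i=s_iD_j$, and at $m_{ij}=3$ to $s_is_js_i=s_js_is_j$ together with $D_is_js_i=s_js_iD_j$. So the only new input needed to pass from $\hat{\mathbf{H}}(W)$ to $\mathbf{H}(W)$ is the third relation in each bullet, which must come from the ideal $\underline{\mathbf{J}}(W)$.

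The central step is to compute $\mathbf{K}_{ij}(W)$ directly. The key observation is that conjugation by a simple reflection $s_k$ sends $D_s\mapsto D_{s_kss_k}\in\hat{\mathbf{D}}$ whenever $s\neq s_k$, but sends $D_k$ itself to $1-s_k-D_k$, which leaves $\hat{\mathbf{D}}$. Consequently, membership of a degree-$\leq m_{ij}$ element of $\hat{\mathbf{D}}(W_{\{i,j\}})\cap\ker\varepsilon$ in $\mathbf{K}(W_{\{i,j\}})$ amounts to a linear cancellation condition on the ``bad'' group-algebra tails produced by conjugation. I would parameterize such an element in a chosen monomial basis (using $D_k^2=D_k$ and, for $m_{ij}=3$, the identity $D_{s_is_js_i}=s_iD_js_i$ coming from the definition of $D_s$), and solve the resulting linear system. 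For $m_{ij}=2$ the solution is one-dimensional, spanned by $D_iD_j-D_jD_i$, which gives the commutativity relation. For $m_{ij}=3$, the reflections of $W_{\{i,j\}}\cong\mS_3$ form a single three-element orbit, and a longer but analogous calculation yields a one-dimensional $\mathbf{K}_{ij}(W)$; choosing a basis element and rewriting with the relations already available produces exactly $D_js_iD_j - s_iD_jD_i - D_iD_js_i - s_iD_js_i$.

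The proof concludes with a completeness check: the algebra defined by the listed relations surjects onto $\mathbf{H}(W)$ and admits an explicit spanning set of the form $\{w\cdot M : w\in W,\,M\text{ an ordered monomial in the }D_i\}$, which matches the size of $\mathbf{H}(W)$, so the surjection is an isomorphism. The main obstacle is the $m_{ij}=3$ invariance computation: one must simultaneously track the cancellation of $s_i$-, $s_j$-, and $s_is_js_i$-tails across the three distinct reflection conjugations in $W_{\{i,j\}}$, and choose a monomial basis that keeps the resulting linear system tractable.
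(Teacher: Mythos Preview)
The paper does not itself prove this theorem; it is quoted from [BK, Theorem~1.24]. However, the paper's proof of the analogous Theorem~\ref{theoremb1} for type $B_2$ is explicitly modelled on the proof of [BK, Theorem~1.24], so that is the natural point of comparison. The main technical difference from your proposal is that [BK] and the paper do \emph{not} track conjugation $wD_sw^{-1}$ directly. Instead they use the $\mathbb{Z}W$-module algebra structure on $\hat{\mathbf{D}}(W)$ recorded in \eqref{b2-module-algebra}, under which $w(D_s)$ equals $D_{wsw^{-1}}$ or $1-D_{wsw^{-1}}$ according to lengths and therefore stays inside $\hat{\mathbf{D}}$, together with a family of twisted derivations $d_k$ satisfying $d_k(\widehat{D}_{k'})=\delta_{k,k'}$ and $d_k(xy)=d_k(x)y+\bar s_k(x)\,d_k(y)$. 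The containment $\mathbf{K}_{ij}(W)\subseteq\{x:d_k(x)=0=\varepsilon(x)\text{ for all }k\}$ from [BK, Proposition~7.33(b)] turns the problem into a linear system in the coefficients of $x$ relative to a fixed monomial basis; one solves it, reduces the resulting list of generators of the containing ideal, and then verifies via [BK, Proposition~7.20] that the surviving generators actually lie in $\mathbf{K}_{ij}(W)$. Your direct conjugation approach is more elementary and should go through for $m_{ij}\le 3$, but two points deserve care: membership in $\mathbf{K}(W_{\{i,j\}})=\bigcap_{w}w\hat{\mathbf{D}}w^{-1}$ a priori requires checking conjugation by every $w\in W_{\{i,j\}}$, not only by the simple reflections; and your closing ``completeness check'' via a spanning-set count is misplaced---once $\mathbf{K}_{ij}(W)$ is determined, the presentation follows by definition, and what is actually needed (as in the paper) is the verification that the candidate elements genuinely lie in $\mathbf{K}_{ij}(W)$, since the cancellation conditions you impose give only a superset.
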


Recall that an $I\times I$-matrix $A=(a_{ij})$ is a generalized Cartan matrix if $a_{ii}=2$, $a_{ij}\in \mathbb{Z}_{\le 0}$ for $i\ne j$ and $a_{ij}\cdot a_{ji}=0$ implies that $a_{ij}=a_{ji}=0$.

The following conjecture is stated in [BK, Conjecture 1.38].

\begin{conjecture}
\label{introduction-conjecture1}
Let $A=(a_{ij})_{i,j\in I}$ be a generalized Cartan matrix. Let $W=W_A$ be the corresponding crystallographic Coxeter group, i.e., $m_{ij}=
\begin{cases}
2+a_{ij}a_{ji} & \text{if $a_{ij}a_{ji}\le 2$}\\
6   & \text{if $a_{ij}a_{ji}=3$}\\
0   & \text{if $a_{ij}a_{ji}>3$}\\
\end{cases}$
for $i\ne j\in I$ and let ${\mathcal L}_I=\mathbb{Z}[t_i^{\pm 1}]_{i\in I}$. Then the assignments
\begin{equation}
\label{eq:demazure W}
s_i(t_j):=t_i^{-a_{ij}}t_j,\quad D_i(t_j):=t_j\frac{1-t_i^{-a_{ij}}}{1-t_i}\,
\end{equation}
for $i,j\in I$ turn ${\mathcal L}_I$ into an ${\bf H}(W)$-module algebra.
\end{conjecture}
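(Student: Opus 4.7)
The plan is to reduce the conjecture to a finite rank-$2$ verification, using the explicit presentations of ${\bf H}(W)$: Theorem~\ref{introduction-theorem} covers $m_{ij}\in\{2,3\}$, and the main presentation theorems proved elsewhere in this paper cover $m_{ij}\in\{4,5,6\}$. Since $W$ is crystallographic, only $m_{ij}\in\{2,3,4,6\}$ occur, so every pair $(i,j)$ is covered by an explicit presentation, which makes the module algebra axiom checkable in finitely many steps.

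First I extend the assignments on each generator $t_j$ to operators on all of $\mathcal{L}_I$ as dictated by the Hopf structure: since $\Delta(s_i)=s_i\otimes s_i$, the element $s_i$ must act as an algebra automorphism of $\mathcal{L}_I$; and since $\Delta(D_i)=D_i\otimes 1+s_i\otimes D_i$ in $\hat{\bf H}(W)$, the element $D_i$ must satisfy the twisted Leibniz rule $D_i(fg)=D_i(f)g+s_i(f)D_i(g)$. Compatibility with $t_jt_j^{-1}=1$ forces the action on negative powers. With this extension, $D_i$ is visibly the classical isobaric divided-difference operator $(1-s_i)/(1-t_i)$ on each generator, and the $s_i$ realize the standard Weyl group action on the weight lattice.

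Since I now have an action of the free algebra on the generators $\{s_i,D_i\}$ on $\mathcal{L}_I$, it remains to verify the defining relations of ${\bf H}(W)$. By the cited presentations, all such relations live inside rank-$2$ parabolic subalgebras: for each pair $(i,j)$ with $m_{ij}\in\{2,3,4,6\}$ I must check the rank-$1$ relations, the Coxeter and linear braid relations, and the additional polynomial relations in $D_i,D_j$ coming from ${\bf K}_{ij}(W)$. Each such relation involves only $s_i,s_j,D_i,D_j$, and these operators act on any $t_k$ with $k\notin\{i,j\}$ by multiplication by a Laurent monomial in $t_i,t_j$; consequently every verification reduces to an identity of operators on $\mathbb{Z}[t_i^{\pm1},t_j^{\pm1}]$ and, via Leibniz, further to an identity on the two generators $t_i$ and $t_j$. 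The rank-$1$ identities $s_i^2=1$, $D_i^2=D_i$, $s_iD_i+D_is_i=s_i-1$, the Coxeter relation, and the linear braid relations are then immediate classical facts about Weyl group actions and divided-difference operators.

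The principal obstacle is the rank-$2$ verification when $m_{ij}=4$ or $6$: the new relations supplied by the presentation theorems of this paper contain many terms of degree up to $m_{ij}$ in $D_i,D_j$, and their validity on $\mathcal{L}_I$ hinges on cancellations that become transparent only after full expansion in powers of $t_i,t_j$ using the precise Cartan values $a_{ij},a_{ji}$. The $m_{ij}=6$ case (type $G_2$, with asymmetric Cartan data $(a_{ij},a_{ji})\in\{(-1,-3),(-3,-1)\}$) is the most delicate, but each identity is a routine, if lengthy, mechanical computation once the action has been set up as above.
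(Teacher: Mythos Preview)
Your plan coincides with the paper's: reduce to rank-$2$ parabolics via the explicit presentations (Theorem~\ref{introduction-theorem} for $m_{ij}\in\{2,3\}$, Theorems~\ref{theoremb1} and~\ref{theoremg2} for $m_{ij}\in\{4,6\}$), and then verify the finitely many relations for the Demazure operators in each type. The paper does exactly this in Lemmas~\ref{b2-lemma1} and~\ref{G2-lemma1}, Corollaries~\ref{b2-corollary-module} and~\ref{G2-corollary-module}, and assembles the pieces in Section~5.

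The one point where the paper's execution is tighter than your sketch is the verification mechanism. You propose to check operator identities on $\mathcal{L}_I$ and reduce ``via Leibniz'' to the two generators $t_i,t_j$. As stated this step is not justified: for a composite such as $D_iD_j$ the twisted Leibniz expansion of $(D_iD_j)(fg)$ contains cross-terms like $s_iD_j(f)\,D_i(g)$, so vanishing on generators does not propagate to vanishing on products unless the element in question happens to be skew-primitive, which the degree-$\le m_{ij}$ generators of ${\bf K}_{ij}(W)$ are not. The paper avoids this by passing through the smash product: by [BK, Proposition~7.42] there is already an algebra homomorphism $\hat p_W:\hat{\bf H}(W)\to\mathcal{Q}\rtimes\mathbb{Z}W$ sending $D_i\mapsto\frac{1}{1-t_i}(1-s_i)$, so one only has to check that each \emph{new} ideal generator from the presentation maps to zero in $\mathcal{Q}\rtimes\mathbb{Z}W$. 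That is an equality of elements $\sum_{w\in W_{\{i,j\}}} f_w\,w$ with $f_w\in\mathcal{Q}_{\{i,j\}}$, verified coefficient-by-coefficient (see the proofs of Lemmas~\ref{b2-lemma1} and~\ref{G2-lemma1}). This packaging makes the rank-$2$ reduction automatic, since the images visibly involve only $t_i,t_j,s_i,s_j$, and the module-algebra structure on $\mathcal{L}_I$ then follows by restricting the natural action of $\mathcal{Q}\rtimes\mathbb{Z}W$ on $\mathcal{Q}$. Your outline becomes correct once you replace the Leibniz argument by this smash-product computation, and then it is literally the paper's proof.
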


Conjecture \ref{introduction-conjecture1} has been proved by Berenstein and Kazhdan in [BK, Section 7.8] when $W$ is simply-laced, that is, when $A$ is symmetric. The first result of the paper is the proof of this conjecture when $W$ is non-simply-laced.

We denote by ${\bf D}(W)$ the image of $\hat {\bf D}(W)$ under the natural projection $\hat {\bf H}(W)\twoheadrightarrow {\bf H}(W)$. The following conjecture is stated in [BK, Conjecture 1.33] which gives a presentation of ${\bf D}(W)$.
\begin{conjecture}
\label{introduction-conjecture2}
For any crystallographic Coxeter group $W$,  the algebra ${\bf D}(W)$ is generated by $D_s$ $(s\in {\mathcal S})$ subject to relations $D_s^2=D_s$ $(s\in {\mathcal S})$, the braid relations$:$
\begin{equation}
\label{braid-relations}
\underbrace{D_iD_jD_{i}\cdots}_{m_{ij}} =\underbrace{D_jD_iD_{j}\cdots}_{m_{ij}}
\end{equation}
and  the following relations$:$

$(a)$ Quadratic-linear relations $($for $1\le p<\frac{m}{2n})$$:$
$$\displaystyle{\sum\limits_{0\le a<b< \frac{m}{n}:b-a=\frac{m}{n}-p} D_{r+an} D_{r+bn} = \sum\limits_{0\le a'<b'< \frac{m}{n}:b'-a'=p} D_{r+b'n} D_{r+a'n}-
\sum\limits_{p\le c<\frac{m}{n}-p} D_{r+cn}}.$$

$(b)$ Yang-Baxter type relations $($for $0\le t\le \frac{m}{n})$$:$
$$\overset{\longrightarrow}{\prod\limits_{t\le a\le \frac{m}{n}-1}}(1-D_{r+an})
\overset{\longrightarrow}{\prod\limits_{0\le b\le t-1}} D_{r+bn}
=\overset{\longleftarrow}{\prod\limits_{0\le b\le t-1}} D_{r+bn}\overset{\longleftarrow}{\prod\limits_{t\le a\le \frac{m}{n}-1}}(1-D_{r+an}).$$
where $m :=m_{ij}\geq 2$ for any two distinct $i, j\in I,$ $n$ is a divisor of $m$, $1\leq r\leq n$ and $D_k:=D_{w\cdot \underbrace{s_is_j\cdots}_{2k-1}\cdot w^{-1}}\in {\bf D}(W)$ with any $w\in W$ such that $\ell(ws_i)=\ell(w)+1,\ell(ws_j)=\ell(w)+1$ and for $k=1,\ldots,m$.
\end{conjecture}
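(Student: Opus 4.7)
The plan is to reduce the conjecture to a rank-$2$ assertion. All of the listed identities --- the idempotency $D_s^2 = D_s$, the braid relations (\ref{braid-relations}), and the quadratic-linear and Yang-Baxter relations (a) and (b) --- are statements about reflections sitting inside a single rank-$2$ parabolic subgroup $W_{\{i,j\}}$, so together with the natural maps $\mathbf{D}(W_{\{i,j\}}) \to \mathbf{D}(W)$ one expects $\mathbf{D}(W)$ to be built from its rank-$2$ pieces with no further relations. The cases $m_{ij} \in \{2,3\}$ can be read off from the presentation of $\mathbf{H}(W_{\{i,j\}})$ given by Theorem \ref{introduction-theorem}, so for a crystallographic Coxeter group the remaining cases are $m_{ij}=4$ and $m_{ij}=6$ --- precisely the setting in which this paper produces a new explicit presentation of $\mathbf{H}(W_{\{i,j\}})$. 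That presentation is the key ingredient.

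The rank-$2$ step itself splits into two parts. First, \emph{verify the relations}: starting from the new presentation of $\mathbf{H}(W_{\{i,j\}})$, one expands each $D_k$ as an explicit word in $s_i, s_j, D_i, D_j$ using the inductive rule $D_{s_i t s_i} = s_i D_t s_i$, and then checks by direct computation that every relation in (a) and (b) holds in $\mathbf{D}(W_{\{i,j\}})$. Second, \emph{establish completeness}: let $\mathbf{D}'_{ij}$ denote the abstract algebra defined by the conjectural presentation. The verification yields a surjection $\mathbf{D}'_{ij} \twoheadrightarrow \mathbf{D}(W_{\{i,j\}})$, and one must upgrade it to an isomorphism. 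A natural route is to use the listed relations as a rewriting system to span $\mathbf{D}'_{ij}$ by a controlled set of monomials, and to detect linear independence of their images in $\mathbf{D}(W_{\{i,j\}})$ via a faithful module --- the Demazure-type action of $\mathbf{H}(W)$ on $\mathcal{L}_I = \mathbb{Z}[t_i^{\pm 1}]_{i\in I}$ arising from Conjecture \ref{introduction-conjecture1} (proved as the paper's first main application) is the obvious candidate, since evaluating monomials in the $D_k$'s against well-chosen Laurent polynomials can separate distinct elements.

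The main obstacle is the completeness step when $m_{ij}=6$, that is, in type $G_2$. Here there are six reflections $D_1,\ldots,D_6$, and the quadratic-linear relations (a) are indexed by divisors $n \in \{1,2,3,6\}$ of $m=6$; the resulting web of identities couples the $D_k$'s in a rigid but intricate way. Any rewriting system built from the listed relations then has many overlap ambiguities, and it is not a priori clear that the identities (a) and (b) suffice to resolve all of them: failure of confluence at even a single overlap would produce an extra relation in $\mathbf{D}(W_{G_2})$ not implied by the stated ones, and hence a genuine obstruction to the conjecture. By contrast, the case $m_{ij}=4$ (type $B_2$) involves only four generators $D_1,\ldots,D_4$ and two nontrivial divisors of $m$, so the rewriting is small enough to treat exhaustively and one expects the conjecture to go through as stated. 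The technical crux is therefore a careful computational analysis in the $G_2$ situation, where the outcome --- proof or counterexample --- is decided by a single overlap calculation.
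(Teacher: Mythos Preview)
Your high-level plan---reduce to the rank-$2$ parabolics $W_{\{i,j\}}$ and then use the paper's explicit presentation of $\mathbf{H}(W_{\{i,j\}})$ as the key input---matches the paper's strategy, and you correctly anticipate that $m_{ij}=4$ goes through while $m_{ij}=6$ is where the conjecture fails.

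Where your proposal diverges is in the mechanism for the completeness step. You want to build a rewriting system for the abstract algebra $\mathbf{D}'_{ij}$ on the conjectured relations and then separate monomials via the Demazure action. The paper does neither. The point is that the proofs of Theorems~\ref{theoremb1} and~\ref{theoremg2} already \emph{compute} $K_{12}(W_{\{i,j\}})$ from scratch: one writes a general element of $\hat{\mathbf{D}}(W_{\{i,j\}})_{\le m_{ij}}$ in the monomial basis in $\widehat{D}_1,\ldots,\widehat{D}_{m_{ij}}$, imposes the linear conditions $d_k(x)=0=\varepsilon(x)$ coming from the derivations and the $\mathbb{Z}W$-module structure, and solves. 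This outputs an explicit generating set for the ideal of relations, so there is no separate ``completeness'' problem---one simply inspects whether the generators coincide with the conjectured list. For $B_2$ the two generators are identified as $Q_{12}^{(1,1,1)}$ and $R_{12}^{(1,1,4)}$, precisely the instances of (a) and (b), and the conjecture holds. For $G_2$ the generating set contains the degree-$4$ elements \eqref{G2-K12W2-element1}--\eqref{G2-K12W2-element3} (equivalently relations \eqref{G-relation9}--\eqref{G-relation11}), and the paper observes that these are not among the elements of type (a) or (b); that is the counterexample.

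Your confluence heuristic is pointed at the right phenomenon but the logic is inverted. Failure of confluence for a rewriting system built from (a) and (b) is a statement about $\mathbf{D}'_{ij}$ and says nothing directly about $\mathbf{D}(W_{G_2})$; and the Demazure action, being an $\mathbf{H}(W)$-module, factors through $\mathbf{D}(W_{G_2})$ and therefore cannot detect elements that are nonzero in $\mathbf{D}'_{ij}$ but zero in $\mathbf{D}(W_{G_2})$---which is exactly what a counterexample requires. What actually produces the counterexample is knowing the \emph{true} relations in $\mathbf{D}(W_{G_2})$, and that comes from the direct $K_{12}$ computation, not from rewriting or from the action on $\mathcal{L}_I$. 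In short: your ingredients are the right ones, but the paper's route through the derivations $d_k$ replaces your rewriting/faithful-module machinery by a single linear-algebra computation that settles both directions at once.
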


Conjecture \ref{introduction-conjecture2} has been proved by Berenstein and Kazhdan when $W$ is simply-laced, that is, $m_{ij}\in \{0,2,3\}$ for all distinct $i,j\in I$. Another result of this paper is the proof of the conjecture when $m_{ij},$ associated to any two distinct vertices $i$ and $j,$ equals $4$ and gives a counterexample to the conjecture when some parameter $m_{ij}$ equals $6.$

This paper is organized as follows. In Sections 2, 3 and 4, we first consider the parabolic subgroups $W_{\{i,j\}}$ of $W$ generated by $s_i$ and $s_j$ for any two distinct vertices $i$ and $j$ when $m_{ij}=4,$ $5$ or $6,$ which are isomorphic to the Coxeter groups of type $B_2,$ $G_2$ or $I_{2}(5),$ respectively. We prove that Conjecture \ref{introduction-conjecture1} is true when $W_{\{i,j\}}$ is of type $B_2$ and $G_2$ on the one hand, and on the other hand, we show that Conjecture \ref{introduction-conjecture2} holds when $W_{\{i,j\}}$ is of type $B_2$ and that it does not hold when $W_{\{i,j\}}$ is of type $G_2.$

In Section 5, by using the results in Sections 2-4, we give an explicit presentation of an Hecke-Hopf algebra when the parameter $m_{ij},$ associated to any two distinct vertices $i$ and $j$ in the presentation of a Coxeter group, equals $4,$ $5$ or $6$ and give a proof of Conjecture \ref{introduction-conjecture1} when the Coxeter group is crystallographic and non-simply-laced. Besides, we show that Conjecture \ref{introduction-conjecture2} holds when $m_{ij},$ associated to any two distinct vertices $i$ and $j,$ equals $4$ and that it does not hold when some $m_{ij}$ equals $6.$

In Section 6, we define a new family of algebras, called generalized affine Hecke algebras, associated to affine Weyl groups such that the Hecke-Hopf algebras associated to finite Weyl groups and the Hecke algebras associated to affine Weyl groups are both subalgebras of the new kind of algebras. Finally, we study the representation theory of the generalized affine Hecke algebras.

\section{Type $B_2$}
Let $W_1$ be the Coxeter group of type $B_2$ with generators $s_1,s_2$ and relations $s_1^{2}=s_2^{2}=1$ and $s_1s_2s_1s_2=s_2s_1s_2s_1.$ We then have the following theorem.

\begin{theorem}\label{theoremb1}
The Hecke-Hopf algebra $\mathbf{H}(W_1),$ associated to $W_1,$ has the following presentation$:$ it is generated by the elements $s_1,s_2,D_1,D_2$ with the following relations$:$
\begin{align}
s_1^{2}=&s_2^{2}=1,\quad s_1s_2s_1s_2=s_2s_1s_2s_1;\label{B-relation1}\\[0.1em]
D_{i}^{2}=&D_{i};\label{B-relation2}\\[0.1em]
s_{i}D_i&+D_{i}s_i=s_i-1;\label{B-relation3}\\[0.1em]
D_1s_2&s_1s_2=s_2s_1s_2D_1,\quad D_2s_1s_2s_1=s_1s_2s_1D_2;\label{B-relation4}\\[0.1em]
D_2s_1&s_2D_1=D_1D_2s_1s_2+s_1D_2D_1s_2+s_1s_2D_1D_2+s_1s_2D_1s_2+s_1D_2s_1s_2;\label{B-relation5}\\[0.1em]
D_1D_2&D_1D_2=D_2D_1D_2D_1.\label{B-relation6}
\end{align}
\end{theorem}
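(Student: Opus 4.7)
The plan is to leverage the fact that relations \eqref{B-relation1}--\eqref{B-relation4} are precisely the defining relations of $\hat{\mathbf{H}}(W_1)$. Since $\mathbf{H}(W_1)=\hat{\mathbf{H}}(W_1)/\underline{\mathbf{J}}(W_1)$ and $\underline{\mathbf{J}}(W_1)$ is generated by $\mathbf{K}_{12}(W_1)\subseteq\hat{\mathbf{D}}(W_1)\cap\ker\varepsilon$, the proof reduces to showing that this two-sided ideal coincides with the ideal generated (inside $\hat{\mathbf{H}}(W_1)$) by the two differences $\text{LHS}-\text{RHS}$ coming from \eqref{B-relation5} and \eqref{B-relation6}.

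For the easy direction, that \eqref{B-relation5} and \eqref{B-relation6} hold in $\mathbf{H}(W_1)$, I would treat them separately. Relation \eqref{B-relation6} already lives in $\hat{\mathbf{D}}(W_1)$: the element $D_1D_2D_1D_2-D_2D_1D_2D_1$ is in $\ker\varepsilon$ and has degree $4=m_{12}$, so I only need to verify $W_1$-conjugation invariance. Using the Coxeter relation $s_1s_2s_1s_2=s_2s_1s_2s_1$ together with the linear braid relations \eqref{B-relation4}, a direct check should show this element is stabilized under conjugation by both $s_1$ and $s_2$, hence by all of $W_1$, placing it in $\mathbf{K}_{12}(W_1)$. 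Relation \eqref{B-relation5} mixes $s$'s and $D$'s; I would move every $s_i$ to the right using the identities $s_iD_j=(s_iD_js_i)\cdot s_i$ for $i\neq j$ together with the rank-one relation \eqref{B-relation3}, so as to rewrite $\text{LHS}-\text{RHS}$ in the form $Y\cdot u$ for some $u\in W_1$ and some $Y\in\hat{\mathbf{D}}(W_1)\cap\ker\varepsilon$, and then verify $Y\in\mathbf{K}_{12}(W_1)$ by the same conjugation check.

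For the harder direction, I must show that the abstract algebra $A$ defined by the presentation \eqref{B-relation1}--\eqref{B-relation6} is no larger than $\mathbf{H}(W_1)$. The plan is to establish a PBW-style triangular decomposition: using \eqref{B-relation3} and \eqref{B-relation4} every element of $A$ can be written as $\sum_{w\in W_1}d_w\cdot w$ with each $d_w$ in the subalgebra $A_D\subseteq A$ generated by $D_1,D_2,s_1D_2s_1,s_2D_1s_2$. The relations \eqref{B-relation2} and \eqref{B-relation6}, together with a purely-$D$ consequence of \eqref{B-relation5} extracted by the same $s$-pushing used above, should bound the $\mathbb{Z}$-rank of $A_D$; comparing with the known rank of $\mathbf{D}(W_1)$ coming from the general structural results of [BK] then forces equality and completes the isomorphism $A\cong\mathbf{H}(W_1)$.

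The main obstacle is the enumeration inherent in the converse direction: one must check by hand that every $W_1$-conjugation-invariant element of $\hat{\mathbf{D}}(W_1)\cap\ker\varepsilon$ of degree at most $m_{12}=4$ reduces, modulo \eqref{B-relation1}--\eqref{B-relation4}, to a consequence of \eqref{B-relation5} and \eqref{B-relation6}. In particular, pinning down the precise element $Y\in\hat{\mathbf{D}}(W_1)$ whose reduction encodes relation \eqref{B-relation5} and verifying its $W_1$-invariance is the most delicate step, because in the non-simply-laced rank-$2$ setting there is no $D_1\leftrightarrow D_2$ symmetry to exploit, and several candidate low-degree invariants have to be ruled out by explicit computation in $\hat{\mathbf{D}}(W_1)$.
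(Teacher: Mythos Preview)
Your high-level plan---identify $\mathbf{K}_{12}(W_1)$ and show that, as a two-sided ideal of $\hat{\mathbf{H}}(W_1)$, it is generated by the two elements corresponding to \eqref{B-relation5} and \eqref{B-relation6}---matches the paper's. The easy direction is also essentially the same: the paper pushes the $s$'s out of \eqref{B-relation5} exactly as you suggest, obtaining the element $-\widehat D_2+\widehat D_2\widehat D_3-\widehat D_3-\widehat D_4\widehat D_1+\widehat D_3\widehat D_4+\widehat D_1\widehat D_2$ in the $\widehat D_k$'s, and then verifies membership in $\mathbf{K}_{12}(W_1)$ not by a bare conjugation check but by identifying it with $Q_{12}^{(1,1,1)}$ (and \eqref{B-relation6} with $R_{12}^{(1,1,4)}$) from [BK, \S7.4], for which [BK, Proposition 7.20] already supplies the invariance.

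The substantive divergence is in the hard direction. The paper does \emph{not} use a PBW rank comparison. Instead it invokes [BK, Proposition 7.33(b)], which bounds $\mathbf{K}_{12}(W_1)$ by the joint kernel of four explicit twisted derivations $d_k$ on $\hat{\mathbf{D}}(W_1)_{\le 4}$ (together with $\ker\varepsilon$). Writing a general element in a monomial basis of $\hat{\mathbf{D}}(W_1)_{\le 4}$ and imposing $d_1=\cdots=d_4=0$ yields a finite linear system in the coefficients; solving it produces thirteen explicit generators, which a routine reduction (using Lemma~\ref{b2-lemma-relations}) collapses to two. This derivation criterion is precisely the tool that makes the ``enumeration'' you flag as the main obstacle tractable; without it, a direct classification of all degree~$\le 4$ elements whose $W_1$-conjugates remain in $\hat{\mathbf{D}}(W_1)$ is considerably harder to organise. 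Your alternative---bounding $\operatorname{rank}_{\mathbb Z}A_D$ and comparing with a known rank of $\mathbf{D}(W_1)$---could in principle work, but you would need an independent computation of that rank from [BK] that does not already presuppose the presentation you are trying to establish; the paper avoids this issue entirely by computing $\mathbf{K}_{12}(W_1)$ head-on.
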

\begin{proof}
Our proof follows from that of [BK, Theorem 1.24], but more complicated. We just give a sketch of the main procedure.

Recall from [BK, Lemma 5.20] that for each Coxeter group $W$, $\hat {\bf D}(W)$ is a $\mathbb{Z}W$-module algebra via the following action:
\begin{align}\label{b2-module-algebra}
w(D_s)=
\begin{cases}
D_{wsw^{-1}} & \text{if } \ell(ws)>\ell(w)
\\
1-D_{wsw^{-1}}
 & \text{if } \ell(ws)<\ell(w)
\end{cases}
\end{align}
for $w\in W$ and $s\in {\mathcal S},$ and $\ell$ is the length function on $W.$

We set $\widehat{D}_1 :=D_{s_{1}}=D_1$, $\widehat{D}_2 :=D_{s_{1}s_2s_1}=s_1D_{2}s_1,$ $\widehat{D}_3 :=D_{s_{2}s_1s_2}=s_{2}D_1s_{2}$ and $\widehat{D}_{4} :=D_{s_2}=D_{2}.$ We also set $\bar{s}_{1} :=s_1,$ $\bar{s}_2 :=s_1s_2s_1,$ $\bar{s}_{3} :=s_2s_1s_2$ and $\bar{s}_{4} :=s_2.$

By [BK, Proposition 7.33(b)] we have that
\begin{align}\label{b2-k12w1}
K_{12}(W_1) \subseteq \{x\in \hat {\bf D}(W_1)_{\leq 4}\:|\:d_{k}(x)=0=\varepsilon(x), 1\leq k\leq 4\},
\end{align}
where $d_{k}(\widehat{D}_{k'})=\delta_{k, k'}$ and $d_{k}(xy)=d_{k}(x)y+\bar{s}_{k}(x)d_{k}(y)$ for all $\hat {\bf D}(W_1).$

We have that $\hat {\bf D}(W_1)$ is a free $\mathbb{Z}W$-module with a basis consisting of the following elements: $1, \widehat{D}_1, \widehat{D}_2, \widehat{D}_3, \widehat{D}_4,$ $\widehat{D}_{a}\widehat{D}_{b}$ ($a\neq b$), $\widehat{D}_{a}\widehat{D}_{b}\widehat{D}_{c}$ ($a, b, c$ are different from each other), $\widehat{D}_{a}\widehat{D}_{b}\widehat{D}_{a}$ ($a\neq b$), $\widehat{D}_{a}\widehat{D}_{b}\widehat{D}_{c}\widehat{D}_{d}$ ($a, b, c, d$ are different from each other), $\widehat{D}_{a}\widehat{D}_{b}\widehat{D}_{c}\widehat{D}_{b}$ ($a, b, c$ are different from each other), $\widehat{D}_{a}\widehat{D}_{b}\widehat{D}_{c}\widehat{D}_{a}$ ($a, b, c$ are different from each other), $\widehat{D}_{a}\widehat{D}_{b}\widehat{D}_{a}\widehat{D}_{d}$ ($a, b, d$ are different from each other), $\widehat{D}_{a}\widehat{D}_{b}\widehat{D}_{a}\widehat{D}_{b}$ ($a\neq b$).

By \eqref{b2-module-algebra} we have

$\bar{s}_{1}(\widehat{D}_1)=1-\widehat{D}_1,\quad\bar{s}_{1}(\widehat{D}_2)=\widehat{D}_4,\quad\bar{s}_{1}(\widehat{D}_3)=\widehat{D}_3,
\quad\bar{s}_{1}(\widehat{D}_4)=\widehat{D}_2;$

$\bar{s}_{2}(\widehat{D}_1)=1-\widehat{D}_3,\quad\bar{s}_{2}(\widehat{D}_2)=1-\widehat{D}_2,\quad\bar{s}_{2}(\widehat{D}_3)=1-\widehat{D}_1,
\quad\bar{s}_{2}(\widehat{D}_4)=\widehat{D}_4;$

$\bar{s}_{3}(\widehat{D}_1)=\widehat{D}_1,\quad\bar{s}_{3}(\widehat{D}_2)=1-\widehat{D}_4,\quad\bar{s}_{3}(\widehat{D}_3)=1-\widehat{D}_3,
\quad\bar{s}_{3}(\widehat{D}_4)=1-\widehat{D}_2;$

$\bar{s}_{4}(\widehat{D}_1)=\widehat{D}_3,\quad\bar{s}_{4}(\widehat{D}_2)=\widehat{D}_2,\quad\bar{s}_{4}(\widehat{D}_3)=\widehat{D}_1,
\quad\bar{s}_{4}(\widehat{D}_4)=1-\widehat{D}_4.$

\noindent By using the actions, we first claim that if an element $x\in \hat {\bf D}(W_1)_{\leq 4},$ when is written as a $\mathbb{Z}$-linear combination of the basis above, does not contain the terms $\widehat{D}_{1}\widehat{D}_{2}\widehat{D}_{3}\widehat{D}_{4},$ $\widehat{D}_{4}\widehat{D}_{3}\widehat{D}_{2}\widehat{D}_{1},$ then it suffices to consider the situation when $x\in \hat {\bf D}(W_1)_{\leq 3}.$

Assume that
\begin{align*}
x=a_{0}+\sum\limits_{l=1}^{4}a_{l}\widehat{D}_{l}+\sum\limits_{b\neq c}f_{bc}\widehat{D}_{b}\widehat{D}_{c}+\sum\limits_{b\neq c,d; c\neq d}g_{bcd}\widehat{D}_{b}\widehat{D}_{c}\widehat{D}_{d}+\sum\limits_{b\neq c}h_{bc}\widehat{D}_{b}\widehat{D}_{c}\widehat{D}_{b}.
\end{align*}
By $d_1(x)=\cdots=d_{4}(x)=0$ we get that the coefficients satisfy the following relations:

$(1)$ $a_{1}=0;\quad f_{12}+f_{41}+g_{412}=0;\quad f_{13}+f_{31}=0;\quad f_{14}+f_{21}+g_{214}=0;$

$(2)$ $g_{123}+g_{413}+g_{431}=0;\quad g_{132}+g_{312}+g_{341}=0;\quad g_{124}+g_{421}=0;\quad g_{142}+g_{241}=0;$

$(3)$ $g_{134}+g_{314}+g_{321}=0;\quad g_{143}+g_{213}+g_{231}=0;\quad g_{213}+g_{312}=0;\quad g_{231}+g_{132}=0;$

$(4)$ $a_{2}+f_{12}+f_{32}+g_{132}+g_{312}=0;\quad f_{21}-f_{32}-g_{132}-g_{312}=0;\quad f_{23}-f_{12}-g_{132}-g_{312}=0;$

$(5)$ $f_{24}+f_{42}+g_{124}+g_{324}+g_{142}+g_{342}+g_{412}+g_{432}=0;\quad g_{214}-g_{324}-g_{342}=0;$

$(6)$ $g_{241}+g_{421}-g_{432}=0;\quad g_{234}-g_{124}-g_{142}=0;\quad g_{243}+g_{423}-g_{412}=0;$

$(7)$ $a_{3}+f_{23}+f_{43}+g_{243}+g_{423}=0;\quad f_{31}+f_{13}+g_{231}+g_{431}+g_{123}+g_{143}+g_{213}+g_{413}=0;$

$(8)$ $f_{32}-f_{43}-g_{243}-g_{423}=0;\quad f_{34}-f_{23}-g_{243}-g_{423}=0;\quad g_{312}+g_{132}-g_{143}=0;$

$(9)$ $g_{321}-g_{413}-g_{431}=0;\quad g_{314}+g_{134}-g_{123}=0;\quad g_{341}-g_{231}-g_{213}=0;$

$(10)$ $a_{4}=0;\quad f_{41}+f_{34}+g_{341}=0;\quad g_{324}+g_{423}=0;\quad g_{342}+g_{243}=0;\quad f_{42}+f_{24}=0;$

$(11)$ $f_{43}+f_{14}+g_{143}=0;\quad g_{412}+g_{342}+g_{324}=0;\quad g_{421}+g_{241}+g_{234}=0;$

$(12)$ $g_{413}+g_{314}=0;\quad g_{431}+g_{134}=0;\quad g_{423}+g_{243}+g_{214}=0;\quad g_{432}+g_{142}+g_{124}=0;$

$(13)$ $h_{12}=h_{13}=h_{14}=0;\quad h_{21}=h_{23}=h_{24}=0;\quad h_{31}=h_{32}=h_{34}=0;$

$\qquad h_{41}=h_{42}=h_{43}=0.$

\noindent By solving the system consisting of the equations in $(1)$-$(12)$, we get that

$(1)$ $a_{2}=-f_{12}-f_{32}-g_{132}-g_{312};\quad f_{23}=f_{12}+g_{132}+g_{312};\quad f_{21}=f_{32}+g_{132}+g_{312};$

$(2)$ $g_{231}=-g_{132};\quad g_{213}=-g_{312};\quad g_{321}=-g_{123};\quad g_{143}=g_{132}+g_{312};\quad g_{431}=-g_{134};$

$(3)$ $g_{314}=-g_{134}+g_{123};\quad g_{413}=g_{134}-g_{123};\quad a_{3}=-f_{12}-f_{32}-g_{132}-g_{312};$

$(4)$ $g_{341}=-g_{132}-g_{312};\quad f_{43}=f_{32}-g_{243}-g_{423};\quad g_{342}=-g_{243};\quad g_{324}=-g_{423};$

$(5)$ $g_{412}=g_{243}+g_{423};\quad f_{41}=-f_{12}-g_{243}-g_{423};\quad g_{214}=-g_{243}-g_{423};\quad g_{421}=-g_{124};$

$(6)$ $f_{14}=-f_{32}-g_{132}-g_{312}+g_{243}+g_{423};\quad g_{432}=-g_{234};\quad g_{241}=g_{124}-g_{234};$

$(7)$ $g_{142}=-g_{124}+g_{234};\quad f_{34}=f_{12}+g_{132}+g_{312}+g_{243}+g_{423};\quad f_{31}=-f_{13};\quad f_{42}=-f_{24},$

\noindent where $f_{12},$ $f_{32},$ $g_{132},$ $g_{312},$ $g_{243},$ $g_{423},$ $g_{123},$ $g_{134},$ $g_{124},$ $g_{234},$ $f_{13}$ and $f_{24}$ are independent variables.

By $(1)$-$(7)$ and \eqref{b2-k12w1} we get that $K_{12}(W_1)$ is contained in the ideal $J$ which is generated by the following elements:

$(1)$ $-\widehat{D}_{2}+\widehat{D}_{2}\widehat{D}_{3}-\widehat{D}_{3}-\widehat{D}_{4}\widehat{D}_{1}+
\widehat{D}_{3}\widehat{D}_{4}+\widehat{D}_{1}\widehat{D}_{2};$

$(2)$ $-\widehat{D}_{2}+\widehat{D}_{2}\widehat{D}_{1}-\widehat{D}_{3}-\widehat{D}_{1}\widehat{D}_{4}+
\widehat{D}_{4}\widehat{D}_{3}+\widehat{D}_{3}\widehat{D}_{2};$

$(3)$ $-\widehat{D}_{2}-\widehat{D}_{3}+\widehat{D}_{2}\widehat{D}_{3}+\widehat{D}_{2}\widehat{D}_{1}+
\widehat{D}_{1}\widehat{D}_{3}\widehat{D}_{2}-\widehat{D}_{2}\widehat{D}_{3}\widehat{D}_{1}+
\widehat{D}_{1}\widehat{D}_{4}\widehat{D}_{3}-\widehat{D}_{3}\widehat{D}_{4}\widehat{D}_{1}-\widehat{D}_{1}\widehat{D}_{4}+\widehat{D}_{3}\widehat{D}_{4};$

$(4)$ $-\widehat{D}_{2}-\widehat{D}_{3}+\widehat{D}_{2}\widehat{D}_{3}+\widehat{D}_{2}\widehat{D}_{1}+
\widehat{D}_{3}\widehat{D}_{1}\widehat{D}_{2}-\widehat{D}_{2}\widehat{D}_{1}\widehat{D}_{3}+
\widehat{D}_{1}\widehat{D}_{4}\widehat{D}_{3}-\widehat{D}_{3}\widehat{D}_{4}\widehat{D}_{1}-\widehat{D}_{1}\widehat{D}_{4}+\widehat{D}_{3}\widehat{D}_{4};$

$(5)$ $\widehat{D}_{2}\widehat{D}_{4}\widehat{D}_{3}-\widehat{D}_{4}\widehat{D}_{3}-\widehat{D}_{3}\widehat{D}_{4}\widehat{D}_{2}+
\widehat{D}_{4}\widehat{D}_{1}\widehat{D}_{2}-\widehat{D}_{4}\widehat{D}_{1}-\widehat{D}_{2}\widehat{D}_{1}\widehat{D}_{4}+
\widehat{D}_{1}\widehat{D}_{4}+\widehat{D}_{3}\widehat{D}_{4};$

$(6)$ $\widehat{D}_{4}\widehat{D}_{2}\widehat{D}_{3}-\widehat{D}_{4}\widehat{D}_{3}-\widehat{D}_{3}\widehat{D}_{2}\widehat{D}_{4}+
\widehat{D}_{4}\widehat{D}_{1}\widehat{D}_{2}-\widehat{D}_{4}\widehat{D}_{1}-\widehat{D}_{2}\widehat{D}_{1}\widehat{D}_{4}+
\widehat{D}_{1}\widehat{D}_{4}+\widehat{D}_{3}\widehat{D}_{4};$

$(7)$ $\widehat{D}_{1}\widehat{D}_{2}\widehat{D}_{3}-\widehat{D}_{3}\widehat{D}_{2}\widehat{D}_{1}+\widehat{D}_{3}\widehat{D}_{1}\widehat{D}_{4}-
\widehat{D}_{4}\widehat{D}_{1}\widehat{D}_{3};$

$(8)$ $\widehat{D}_{1}\widehat{D}_{3}\widehat{D}_{4}-\widehat{D}_{3}\widehat{D}_{1}\widehat{D}_{4}-\widehat{D}_{4}\widehat{D}_{3}\widehat{D}_{1}+
\widehat{D}_{4}\widehat{D}_{1}\widehat{D}_{3};$

$(9)$ $\widehat{D}_{1}\widehat{D}_{2}\widehat{D}_{4}-\widehat{D}_{4}\widehat{D}_{2}\widehat{D}_{1}+\widehat{D}_{2}\widehat{D}_{4}\widehat{D}_{1}-
\widehat{D}_{1}\widehat{D}_{4}\widehat{D}_{2};$

$(10)$ $\widehat{D}_{2}\widehat{D}_{3}\widehat{D}_{4}-\widehat{D}_{4}\widehat{D}_{3}\widehat{D}_{2}-\widehat{D}_{2}\widehat{D}_{4}\widehat{D}_{1}+
\widehat{D}_{1}\widehat{D}_{4}\widehat{D}_{2};$

$(11)$ $\widehat{D}_{1}\widehat{D}_{3}-\widehat{D}_{3}\widehat{D}_{1};$

$(12)$ $\widehat{D}_{2}\widehat{D}_{4}-\widehat{D}_{4}\widehat{D}_{2};$

$(13)$ $\widehat{D}_{1}\widehat{D}_{2}\widehat{D}_{3}\widehat{D}_{4}-\widehat{D}_{4}\widehat{D}_{3}\widehat{D}_{2}\widehat{D}_{1}.$

By Lemma \ref{b2-lemma-relations}, it is a direct verification that $J$ is in fact generated by the following two elements $-\widehat{D}_{2}+\widehat{D}_{2}\widehat{D}_{3}-\widehat{D}_{3}-\widehat{D}_{4}\widehat{D}_{1}+
\widehat{D}_{3}\widehat{D}_{4}+\widehat{D}_{1}\widehat{D}_{2}$ and $\widehat{D}_{1}\widehat{D}_{2}\widehat{D}_{3}\widehat{D}_{4}-\widehat{D}_{4}\widehat{D}_{3}\widehat{D}_{2}\widehat{D}_{1}.$

Finally, note that
\begin{align*}
-\widehat{D}_{2}+\widehat{D}_{2}\widehat{D}_{3}-\widehat{D}_{3}-\widehat{D}_{4}\widehat{D}_{1}+
\widehat{D}_{3}\widehat{D}_{4}+\widehat{D}_{1}\widehat{D}_{2}=Q_{12}^{(1, 1, 1)}
\end{align*}
and
\begin{align*}
\widehat{D}_{1}\widehat{D}_{2}\widehat{D}_{3}\widehat{D}_{4}-\widehat{D}_{4}\widehat{D}_{3}\widehat{D}_{2}\widehat{D}_{1}=R_{12}^{(1, 1, 4)}
\end{align*}
in the notation of [BK, Subsection 7.4]. By [BK, Proposition 7.20] it is easy to see that the two elements $Q_{12}^{(1, 1, 1)}$ and $R_{12}^{(1, 1, 4)}$ belong to $K_{12}(W_1).$

Moreover, if $-\widehat{D}_{2}+\widehat{D}_{2}\widehat{D}_{3}-\widehat{D}_{3}-\widehat{D}_{4}\widehat{D}_{1}+
\widehat{D}_{3}\widehat{D}_{4}+\widehat{D}_{1}\widehat{D}_{2}=0,$ that is, \eqref{B-relation5} holds, it is easy to check that $\widehat{D}_1\widehat{D}_2\widehat{D}_3\widehat{D}_4=\widehat{D}_1\widehat{D}_4\widehat{D}_1\widehat{D}_4=D_1D_2D_1D_2$ and $\widehat{D}_4\widehat{D}_3\widehat{D}_2\widehat{D}_1=\widehat{D}_4\widehat{D}_1\widehat{D}_4\widehat{D}_1=D_2D_1D_2D_1.$ We are done.
\end{proof}

From the presentation in Theorem \ref{theoremb1}, we can get the following relations in $\mathbf{H}(W_1)$.
\begin{lemma}\label{b2-lemma-relations} We have, in $\mathbf{H}(W_1),$
\begin{align}
D_1s_2s_1D_2&=D_2D_1s_2s_1+s_2D_1D_2s_1+s_2s_1D_2D_1+s_2s_1D_2s_1+s_2D_1s_2s_1,\label{B-relation7}\\[0.1em]
D_1D_2D_1s_2&+D_1s_2D_1D_2=D_2D_1s_2D_1+s_2D_1D_2D_1;\label{B-relation8}\\[0.1em]
D_2D_1D_2s_1&+D_2s_1D_2D_1=D_1D_2s_1D_2+s_1D_2D_1D_2.\label{B-relation9}\\[0.1em]
D_1s_2D_1s_2&=s_2D_1s_2D_1;\label{B-relation10}\\[0.1em]
D_2s_1D_2s_1&=s_1D_2s_1D_2;\label{B-relation11}
\end{align}
\end{lemma}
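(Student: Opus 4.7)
My strategy rests on two ingredients: a symmetry reduction and a direct manipulation of \eqref{B-relation5}.

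The presentation of $\hat{\mathbf{H}}(W_1)$ is invariant under the involution $\tau$ swapping $s_1 \leftrightarrow s_2$ and $D_1 \leftrightarrow D_2$: all rank-one and Coxeter relations are manifestly symmetric, and the two linear braid identities (iii) are exchanged. Since $\mathbf{K}_{12}(W_1)$ is defined symmetrically in the pair $\{1,2\}$, the Hopf ideal $\underline{\mathbf{J}}(W_1)$ is $\tau$-stable, so $\tau$ descends to an involutive automorphism of $\mathbf{H}(W_1)$. Applying $\tau$ sends \eqref{B-relation5} to \eqref{B-relation7}, \eqref{B-relation8} to \eqref{B-relation9}, and \eqref{B-relation10} to \eqref{B-relation11}. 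In particular \eqref{B-relation7} is immediate from \eqref{B-relation5}, and it remains only to prove \eqref{B-relation8} and \eqref{B-relation10} directly from \eqref{B-relation1}--\eqref{B-relation6}.

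For \eqref{B-relation10}, $D_1 s_2 D_1 s_2 = s_2 D_1 s_2 D_1$, I would right-multiply \eqref{B-relation5} by $s_2$ and use the rank-one relation \eqref{B-relation3} to rewrite the arising $D_1 D_2 s_2$ as $D_1 s_2 - D_1 - D_1 s_2 D_2$; analogously, I would left-multiply \eqref{B-relation5} by $s_2$ and use \eqref{B-relation3} to handle $s_2 D_2$ and the linear braid \eqref{B-relation4} to convert $s_2 s_1 s_2 D_1$ into $D_1 s_2 s_1 s_2$. Comparing the two resulting expressions and cancelling common terms should isolate the commutator $[D_1, s_2 D_1 s_2]$, which must therefore vanish. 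For \eqref{B-relation8}, I would right-multiply \eqref{B-relation5} by $D_1$ and use $D_1^2 = D_1$ to collapse the left-hand side, then compare with the original \eqref{B-relation5}; feeding in the quartic braid \eqref{B-relation6} after conjugation by $s_2$, together with \eqref{B-relation10} (now available) to commute $D_1$ past $s_2 D_1 s_2$, the extraneous terms cancel and \eqref{B-relation8} emerges.

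\textbf{Main obstacle.} The derivations are computationally heavy rather than conceptually subtle: each identity involves a dozen or so substitutions in which the order of the $s$'s and $D$'s must be tracked exactly. The principal pitfall is the asymmetry of \eqref{B-relation4}, which gives commutation of $D_i$ only with the full three-letter word $s_j s_i s_j$ and not with any shorter subword, so that every attempt to ``slide'' $D_i$ through a shorter string has to be mediated by \eqref{B-relation3} or \eqref{B-relation5}. The $\tau$-symmetry is the only genuine conceptual shortcut, halving the work.
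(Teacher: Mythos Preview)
Your symmetry reduction via the involution $\tau$ is correct and is a genuine simplification the paper does not make explicit: the paper derives \eqref{B-relation7} by hand (conjugating \eqref{B-relation5} by $s_1$ and simplifying with the rank-one and linear-braid relations) and then dismisses \eqref{B-relation9} and \eqref{B-relation11} with ``similarly,'' whereas your argument that $\tau$ descends from $\hat{\mathbf{H}}(W_1)$ to $\mathbf{H}(W_1)$ through the $\tau$-stability of $\mathbf{K}_{12}(W_1)$ makes this rigorous in one stroke.

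Where your plan diverges more riskily is the order in which you attack \eqref{B-relation8} and \eqref{B-relation10}. The paper proves \eqref{B-relation8} \emph{first}: it left-multiplies \eqref{B-relation7} by $D_1$, separately right-multiplies \eqref{B-relation5} by $D_1$, and compares; after using $s_1D_1+D_1s_1=s_1-1$ and the linear braid relation, the residual identity is verified directly. Only \emph{then} does the paper obtain \eqref{B-relation10}, and that derivation explicitly invokes \eqref{B-relation8}: one left-multiplies (a conjugate of) \eqref{B-relation5} by $D_1$, right-multiplies by $D_1$, and the difference collapses to $D_1s_2D_1s_2 - s_2D_1s_2D_1$ precisely because \eqref{B-relation8} kills the cubic remainder. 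Your plan to extract \eqref{B-relation10} first, from left- and right-multiplying \eqref{B-relation5} by $s_2$ alone, does not obviously isolate the commutator $[D_1,\,s_2D_1s_2]$: those manipulations produce terms such as $s_1s_2D_1s_2D_2$ and $D_2D_1s_2s_1s_2$ which do not cancel without a cubic identity of the shape of \eqref{B-relation8}. Your proposed use of \eqref{B-relation6} in deriving \eqref{B-relation8} is also unneeded in the paper's route; \eqref{B-relation8} follows from \eqref{B-relation5} and \eqref{B-relation7} alone. Keep your $\tau$-argument for \eqref{B-relation7}, \eqref{B-relation9}, \eqref{B-relation11}, but reverse your order for the remaining two: derive \eqref{B-relation8} from \eqref{B-relation5} and \eqref{B-relation7}, then feed it into \eqref{B-relation10}.
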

\begin{proof}
\eqref{B-relation7} From \eqref{B-relation5} we can get that \begin{align*}s_1D_2s_1s_2D_1s_1=s_1D_1D_2s_1s_2s_1+D_2D_1s_2s_1+s_2D_1D_2s_1+s_2D_1s_2s_1+D_2s_1s_2s_1.\end{align*}
By $D_2s_1s_2s_1=s_1s_2s_1D_2,$ $s_1D_2s_1s_2=s_2s_1D_2s_1,$ $s_1D_2s_1s_2D_1s_1=-s_2s_1D_2D_1+s_2s_1D_2-s_2s_1D_2s_1$ and $s_1D_1D_2s_1s_2s_1=-D_1s_2s_1D_2+s_2s_1D_2-D_2s_1s_2s_1$, we can easily get \eqref{B-relation7}.

\eqref{B-relation8} From \eqref{B-relation7} we can get that
$$D_1s_2s_1D_2=D_1D_2D_1s_2s_1+D_1s_2D_1D_2s_1+D_1s_2s_1D_2D_1+D_1s_2s_1D_2s_1+D_1s_2D_1s_2s_1.$$ So we have
\begin{align*}
D_1D_2D_1s_2+D_1s_2D_1D_2+&D_1s_2s_1D_2D_1s_1+D_1s_2s_1D_2+D_1s_2D_1s_2\\
&=D_2D_1s_2+s_2D_1D_2+s_2s_1D_2D_1s_1+s_2s_1D_2+s_2D_1s_2.
\end{align*}

From \eqref{B-relation5} we can get that
\begin{align*}
D_2s_1s_2D_1=D_1D_2s_1s_2D_1+s_1D_2D_1s_2D_1+s_1s_2D_1D_2D_1+s_1s_2D_1s_2D_1+s_1D_2s_1s_2D_1.
\end{align*}
So we have
\begin{align*}
D_2D_1s_2D_1+s_2D_1D_2D_1+&s_1D_1D_2s_1s_2D_1+s_2D_1s_2D_1+D_2s_1s_2D_1\\
&=s_1D_1D_2s_1s_2+D_2D_1s_2+s_2D_1D_2+s_2D_1s_2+D_2s_1s_2.
\end{align*}
In order to show \eqref{B-relation8}, it suffices to show that
\begin{align*}
D_1s_2s_1D_2D_1s_1+D_1s_2s_1D_2+&s_1D_1D_2s_1s_2+D_2s_1s_2\\
&=s_2s_1D_2D_1s_1+s_2s_1D_2+s_1D_1D_2s_1s_2D_1+D_2s_1s_2D_1.
\end{align*}

The left-hand side is equal to $-D_1s_2s_1D_2s_1D_1+D_1s_2s_1D_2s_1-D_1s_1D_2s_1s_2+s_1D_2s_1s_2=-D_1s_2s_1D_2s_1D_1+s_1D_2s_1s_2,$ and the right-hand side is equal to $-s_2s_1D_2s_1D_1+s_2s_1D_2s_1-D_1s_1D_2s_1s_2D_1+s_1D_2s_1s_2D_1=s_2s_1D_2s_1-D_1s_1D_2s_1s_2D_1.$ We are done.

\eqref{B-relation9} It can be proved similarly.

\eqref{B-relation10} From \eqref{B-relation5} we can get that
$$s_1D_2s_1s_2D_1=s_1D_1D_2s_1s_2+D_2D_1s_2+s_2D_1D_2+s_2D_1s_2+D_2s_1s_2.$$
From this we have
\begin{align*}
D_1s_1D_2s_1s_2D_1=D_1s_1D_1D_2s_1s_2+D_1D_2D_1s_2+D_1s_2D_1D_2+D_1s_2D_1s_2+D_1D_2s_1s_2,
\end{align*}
and
\begin{align*}
s_1D_2s_1s_2D_1=s_1D_1D_2s_1s_2D_1+D_2D_1s_2D_1+s_2D_1D_2D_1+s_2D_1s_2D_1+D_2s_1s_2D_1.
\end{align*}
Note that $s_1D_1+D_1s_1=s_1-1$ and $D_1s_1D_1=-D_1$. By \eqref{B-relation8} we can easily get that \eqref{B-relation10} holds.

\eqref{B-relation11} It can be proved similarly.
\end{proof}

Set $\alpha_{1}^{\vee} :=\varepsilon_{1}^{\vee}-\varepsilon_{2}^{\vee},$ $\alpha_{2}^{\vee} :=2\varepsilon_{2}^{\vee}$ and also $\alpha_1 :=\varepsilon_1-\varepsilon_2,$ $\alpha_2 :=\varepsilon_2,$ where $\varepsilon_1$ and $\varepsilon_2$ are a set of bases of the Euclidean space $\mathbb{R}^{2}$ and $\varepsilon_1^{\vee}$ and $\varepsilon_2^{\vee}$ are the dual bases. We have
\begin{align*}
a_{11} :=\langle\alpha_{1}^{\vee},\alpha_{1}\rangle=2,\quad a_{22} :=\langle\alpha_{2}^{\vee},\alpha_{2}\rangle=2,\quad a_{12} :=\langle\alpha_{2}^{\vee},\alpha_{1}\rangle=-2,\quad a_{21} :=\langle\alpha_{1}^{\vee},\alpha_{2}\rangle=-1.
\end{align*}
Let $\mathcal{Q}_1$ be the field of fractions of the Laurent polynomial ring $\mathcal{L}_1=\mathbb{Z}[t_1^{\pm 1}, t_2^{\pm 1}].$ We define the action of $W_1$ on $\mathcal{Q}_1$ by
\begin{align*}
s_1(t_1)=t_1^{-1},\quad s_1(t_2)=t_1^{-a_{12}}t_2=t_1^{2}t_2,\quad s_2(t_1)=t_2^{-a_{21}}t_1=t_1t_2,\quad s_2(t_2)=t_2^{-1}.
\end{align*}

Recall from [BK, Proposition 7.42] that the assignments $D_{i} \mapsto \frac{1}{1-t_{i}}(1-s_{i})$ and $s_{i}\mapsto s_{i}$ $(1\leq i\leq2),$ define a homomorphism of algebras $\hat{p}_{W_1} :\hat{\bf{H}}(W_{1})\rightarrow \mathcal{Q}_1\rtimes \mathbb{Z}W_{1}.$ Then we have the following lemma.
\begin{lemma}\label{b2-lemma1}
We have

$(1)$ $\hat{p}_{W_1}(\widehat{D}_2+\widehat{D}_3+\widehat{D}_4\widehat{D}_1-\widehat{D}_1\widehat{D}_2-\widehat{D}_2\widehat{D}_3-\widehat{D}_3\widehat{D}_4)=0.$

$(2)$ $\hat{p}_{W_1}(\widehat{D}_1\widehat{D}_2\widehat{D}_3\widehat{D}_4-\widehat{D}_4\widehat{D}_3\widehat{D}_2\widehat{D}_1)=0.$
\end{lemma}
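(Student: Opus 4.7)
The plan is to verify both identities by direct calculation in the skew-group algebra $\mathcal{Q}_1\rtimes\mathbb{Z}W_1$. Setting $\pi_k:=\hat{p}_{W_1}(\widehat{D}_k)$ and using the twist rule $w\cdot f=w(f)\cdot w$ together with the given actions $s_1(t_2)=t_1^{2}t_2$ and $s_2(t_1)=t_1t_2$, one obtains
\begin{align*}
\pi_1&=\tfrac{1}{1-t_1}(1-s_1), & \pi_2&=\tfrac{1}{1-t_1^{2}t_2}(1-s_1s_2s_1),\\
\pi_3&=\tfrac{1}{1-t_1t_2}(1-s_2s_1s_2), & \pi_4&=\tfrac{1}{1-t_2}(1-s_2).
\end{align*}
Each $\pi_k$ thus has the uniform shape $\frac{1}{1-u_k}(1-r_k)$, where $r_k=\bar{s}_k$ and $u_k$ is the associated positive-root exponent satisfying $r_k(u_k)=u_k^{-1}$; the only manipulations needed are repeated applications of $s_1$ and $s_2$ to monomials in $t_1,t_2$.

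For part (1), I would expand each of the six summands $\pi_2,\pi_3,\pi_4\pi_1,\pi_1\pi_2,\pi_2\pi_3,\pi_3\pi_4$ by pushing every reflection past the rational coefficient in the next factor, so that each summand becomes a $\mathbb{Z}$-linear combination of terms of the form $f(t_1,t_2)\cdot w$ with $w\in W_1$. Collecting by the group element $w$, the claimed vanishing reduces to a small list of rational-function identities, one per $w\in W_1$ that actually appears; each clears to a short polynomial identity in $\mathbb{Z}[t_1^{\pm 1},t_2^{\pm 1}]$ which can be checked directly. As only products of at most two $\pi_k$'s arise, this computation is short and systematic.

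For part (2), the same procedure is applied, but now $\pi_1\pi_2\pi_3\pi_4$ and $\pi_4\pi_3\pi_2\pi_1$ each expand into $2^{4}=16$ terms. A useful structural remark is that $r_1r_2r_3r_4=w_0=r_4r_3r_2r_1$ in $W_1$, where $w_0=s_1s_2s_1s_2$ is the longest element; consequently, after pushing all reflections to the right and collecting by group element, both sides are rational-function linear combinations of the same finite list of elements of $W_1$. The main obstacle is the combinatorial bookkeeping: for each of the $32$ raw summands one has to track the precise composition of $s_1,s_2$ that has been applied to the variables inside each of the nested denominators $1-u_k$. Once this is laid out in a table, matching coefficients of each $w\in W_1$ across the two sides reduces to a finite list of polynomial identities, and a case-by-case inspection completes the verification.
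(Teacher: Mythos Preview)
Your plan is correct and, for part~(1), essentially identical to the paper's: the paper also writes each $\hat{p}_{W_1}(\widehat{D}_k)$ as $\tau_{\bullet}(1-s_{\bullet})$, expands the six summands, collects by $w\in W_1$, and reduces the whole identity to the single rational-function relation $\tau_2\tau_1-\tau_1\tau_{12}-\tau_{12}\tau_{21}-\tau_{21}\tau_2+\tau_{12}+\tau_{21}=0$.

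For part~(2) your approach is valid but differs from the paper's. You propose a direct $16+16$ expansion of $\pi_1\pi_2\pi_3\pi_4-\pi_4\pi_3\pi_2\pi_1$ followed by matching coefficients of each $w\in W_1$. The paper instead first uses part~(1): since $\hat{p}_{W_1}$ already annihilates the element corresponding to relation~\eqref{B-relation5}, that relation holds in the image, and from it one deduces $\hat{p}_{W_1}(\widehat{D}_1\widehat{D}_2\widehat{D}_3\widehat{D}_4)=\hat{p}_{W_1}(\widehat{D}_1\widehat{D}_4\widehat{D}_1\widehat{D}_4)$ and $\hat{p}_{W_1}(\widehat{D}_4\widehat{D}_3\widehat{D}_2\widehat{D}_1)=\hat{p}_{W_1}(\widehat{D}_4\widehat{D}_1\widehat{D}_4\widehat{D}_1)$. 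This reduces the problem to checking $\hat{p}_{W_1}(D_1D_2D_1D_2-D_2D_1D_2D_1)=0$, which involves only the two basic operators $\tau_1(1-s_1)$ and $\tau_2(1-s_2)$; the resulting expansion has more internal symmetry (swapping the indices $1\leftrightarrow 2$ exchanges the two sides), so several of the coefficient checks pair off. Your direct route avoids invoking part~(1) but pays for it with four distinct root-coefficients $1/(1-u_k)$ in play and less symmetry to exploit; either way the verification is a finite and mechanical computation.
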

\begin{proof}
$(1)$ Set $\tau_1 :=\frac{1}{1-t_1}$ and $\tau_2 :=\frac{1}{1-t_2}$ such that $\hat{p}_{W_1}(\widehat{D}_1)=\tau_1(1-s_1)$ and $\hat{p}_{W_1}(\widehat{D}_4)=\tau_2(1-s_2).$ Therefore, we have
\begin{align*}
\hat{p}_{W_1}(\widehat{D}_2)=s_1\tau_2(1-s_2)s_1=\tau_{12}(1-s_{12}),\quad \mathrm{and}\quad \hat{p}_{W_1}(\widehat{D}_3)=s_2\tau_1(1-s_1)s_2=\tau_{21}(1-s_{21}),
\end{align*}
where $\tau_{12}=s_1\tau_{2}s_1=\frac{1}{1-t_1^{2}t_2},$ $\tau_{21}=s_2\tau_{1}s_2=\frac{1}{1-t_1t_2},$ $s_{12}=s_1s_2s_1$ and $s_{21}=s_2s_1s_2.$

Thus, we get that
\begin{align*}
\hat{p}_{W_1}(\widehat{D}_4\widehat{D}_1)=\tau_2(1-s_2)\tau_1(1-s_1)=\tau_2\tau_1(1-s_1)-\tau_2\tau_{21}s_2(1-s_1);
\end{align*}
\begin{align*}
\hat{p}_{W_1}(\widehat{D}_1\widehat{D}_2)=\tau_1(1-s_1)\tau_{12}(1-s_{12})=\tau_1\tau_{12}(1-s_{12})-\tau_1\tau_{2}s_1(1-s_{12});
\end{align*}
\begin{align*}
\hat{p}_{W_1}(\widehat{D}_2\widehat{D}_3)=\tau_{12}(1-s_{12})\tau_{21}(1-s_{21})=\tau_{12}\tau_{21}(1-s_{21})-\tau_{12}(1-\tau_{1})s_{12}(1-s_{21});
\end{align*}
\begin{align*}
\hat{p}_{W_1}(\widehat{D}_3\widehat{D}_4)=\tau_{21}(1-s_{21})\tau_{2}(1-s_{2})=\tau_{21}\tau_{2}(1-s_{2})-\tau_{21}(1-\tau_{12})s_{21}(1-s_{2});
\end{align*}
Therefore, we have
\begin{align}
&\hat{p}_{W_1}(\widehat{D}_2+\widehat{D}_3+\widehat{D}_4\widehat{D}_1-\widehat{D}_1\widehat{D}_2-\widehat{D}_2\widehat{D}_3-\widehat{D}_3\widehat{D}_4)
\notag\\=&\tau_2\tau_1-\tau_1\tau_{12}-\tau_{12}\tau_{21}-\tau_{21}\tau_2+\tau_{12}+\tau_{21}-\tau_2\tau_1s_1-\tau_2\tau_{21}s_2+
\tau_2\tau_{21}s_2s_1\notag\\
&+\tau_1\tau_{12}s_{12}+\tau_1\tau_2s_1-\tau_1\tau_2s_2s_1+\tau_{12}\tau_{21}s_{21}+\tau_{12}(1-\tau_1)s_{12}-\tau_{12}(1-\tau_1)s_2s_1\notag\\
&+\tau_{21}\tau_2s_2+\tau_{21}(1-\tau_{12})s_{21}-\tau_{21}(1-\tau_{12})s_{2}s_1-\tau_{12}s_{12}-\tau_{21}s_{21}.
\label{b2-lemma-equatlity1}
\end{align}
It is easy to check that $\tau_2\tau_1-\tau_1\tau_{12}-\tau_{12}\tau_{21}-\tau_{21}\tau_2+\tau_{12}+\tau_{21}=0.$ By this and \eqref{b2-lemma-equatlity1}, we can see that $(1)$ holds.

$(2)$ We can easily deduce that $\widehat{D}_1\widehat{D}_2\widehat{D}_3\widehat{D}_4=\widehat{D}_1\widehat{D}_4\widehat{D}_1\widehat{D}_4$ by \eqref{B-relation5}. Thus, by $(1)$, in order to check $(2),$ it suffices to prove that $\hat{p}_{W_1}(\widehat{D}_1\widehat{D}_4\widehat{D}_1\widehat{D}_4-\widehat{D}_4\widehat{D}_1\widehat{D}_4\widehat{D}_1)=0.$

We have
\begin{align}\label{b2-lemma-equatlity2}
&\hat{p}_{W_1}(\widehat{D}_1\widehat{D}_4\widehat{D}_1\widehat{D}_4-\widehat{D}_4\widehat{D}_1\widehat{D}_4\widehat{D}_1)\notag\\
=&\tau_1(1-s_1)\tau_2(1-s_2)\tau_1(1-s_1)\tau_2(1-s_2)-\tau_2(1-s_2)\tau_1(1-s_1)\tau_2(1-s_2)\tau_1(1-s_1)\notag\\
=&\big(\tau_1\tau_2(1-s_2)-\tau_1\tau_{12}s_1(1-s_2)\big)\big(\tau_1\tau_2(1-s_2)-\tau_1\tau_{12}s_1(1-s_2)\big)\notag\\
&-\big(\tau_2\tau_1(1-s_1)-\tau_2\tau_{21}s_2(1-s_1)\big)\big(\tau_2\tau_1(1-s_1)-\tau_2\tau_{21}s_2(1-s_1)\big)\notag\\
=&\tau_1\tau_2\tau_1\tau_2(1-s_2)+\tau_1\tau_2\tau_{21}(1-\tau_2)(1-s_2)-\tau_1\tau_2\tau_1\tau_{12}s_1(1-s_2)\notag\\
&+\tau_1\tau_2\tau_{21}\tau_{12}s_2s_1(1-s_2)-\tau_1\tau_{12}(1-\tau_1)\tau_{12}s_1(1-s_2)+\tau_1\tau_{12}\tau_{21}(1-\tau_{12})s_1s_2(1-s_2)\notag\\
&+\tau_1\tau_{12}(1-\tau_1)\tau_2(1-s_2)-\tau_1\tau_{12}\tau_{21}\tau_2s_1s_2s_1(1-s_2)\notag\\
&-\tau_2\tau_1\tau_2\tau_1(1-s_1)-\tau_2\tau_1\tau_{12}(1-\tau_1)(1-s_1)+\tau_2\tau_1\tau_2\tau_{21}s_2(1-s_1)\notag\\
&-\tau_2\tau_1\tau_{12}\tau_{21}s_1s_2(1-s_1)+\tau_2\tau_{21}(1-\tau_2)\tau_{21}s_2(1-s_1)-\tau_2\tau_{21}\tau_{12}(1-\tau_{21})s_2s_1(1-s_1)\notag\\
&-\tau_2\tau_{21}(1-\tau_2)\tau_1(1-s_1)-\tau_2\tau_{21}\tau_{12}\tau_1s_2s_1s_2(1-s_1).
\end{align}
It is easy to see that on the right-hand side of \eqref{b2-lemma-equatlity2}, the constant coefficient is equal to
\begin{align*}
\tau_1\tau_2\tau_1\tau_2+\tau_1\tau_2\tau_{21}(1-\tau_2)+\tau_1\tau_{12}(1-\tau_1)\tau_2-\tau_2\tau_1\tau_2\tau_1-
\tau_2\tau_1\tau_{12}(1-\tau_1)-\tau_2\tau_{21}(1-\tau_2)\tau_1=0;
\end{align*}
the coefficient of $s_1$ is equal to
\begin{align*}
&-\tau_1\tau_2\tau_1\tau_{12}-\tau_1\tau_{12}(1-\tau_1)\tau_{12}-\tau_1\tau_{12}\tau_{21}(1-\tau_{12})\notag\\
&\qquad \qquad +\tau_2\tau_1\tau_2\tau_1+\tau_2\tau_1\tau_{12}(1-\tau_1)+\tau_2\tau_{21}(1-\tau_2)\tau_1\notag\\
=&\tau_1\tau_2(\tau_1\tau_2+\tau_{12}+\tau_{21}-\tau_1\tau_{12}-\tau_2\tau_{21})+
\tau_1\tau_{12}(-\tau_{12}+\tau_1\tau_{12}+\tau_{12}\tau_{21}-\tau_{21}-\tau_1\tau_2)\notag\\
=&\tau_1\tau_2\tau_{12}\tau_{21}-\tau_1\tau_{12}\tau_{21}\tau_2=0;
\end{align*}
similarly, the coefficient of $s_2$ can be proved to be zero; the coefficient of $s_1s_2$ is equal to
\begin{align*}
&\tau_1\tau_2\tau_1\tau_{12}+\tau_1\tau_{12}(1-\tau_1)\tau_{12}+\tau_1\tau_{12}\tau_{21}(1-\tau_{12})-\tau_2\tau_1\tau_{12}\tau_{21}\notag\\
=&\tau_1\tau_{12}(\tau_1\tau_{2}+\tau_{12}-\tau_{1}\tau_{12}+\tau_{21}-\tau_{12}\tau_{21}-\tau_2\tau_{21})=0;
\end{align*}
similarly, the coefficient of $s_2s_1$ can be proved to be zero; the coefficient of $s_1s_2s_1$ is equal to $-\tau_1\tau_{12}\tau_{21}\tau_{2}+\tau_2\tau_1\tau_{12}\tau_{21}=0;$ similarly, the coefficient of $s_2s_1s_2$ and $s_1s_2s_1s_2$ can be proved to be zero. Thus, we can see that the right-hand side of \eqref{b2-lemma-equatlity2} is equal to zero, that is, $(2)$ is proved.
\end{proof}

Lemma \ref{b2-lemma1} immediately yields the following corollary.
\begin{corollary}\label{b2-corollary-module}
The assignments $D_{i} \mapsto \frac{1}{1-t_{i}}(1-s_{i})$ and $s_{i}\mapsto s_{i}$ $(1\leq i\leq2),$ define a homomorphism of algebras $p_{W_1} :\mathbf{H}(W_1)\rightarrow \mathcal{Q}_1\rtimes \mathbb{Z}W_{1}.$
\end{corollary}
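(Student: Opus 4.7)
The plan is essentially a one-line deduction: what remains after Lemma \ref{b2-lemma1} is to promote that computational fact into a universal-property statement for the quotient algebra $\mathbf{H}(W_1)$.

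First I would invoke [BK, Proposition 7.42] to obtain the algebra homomorphism $\hat{p}_{W_1}\colon \hat{\mathbf{H}}(W_1)\to \mathcal{Q}_1\rtimes \mathbb{Z}W_1$ defined on generators by $D_i\mapsto \frac{1}{1-t_i}(1-s_i)$ and $s_i\mapsto s_i$. By definition $\mathbf{H}(W_1)=\hat{\mathbf{H}}(W_1)/\underline{\mathbf{J}}(W_1)$, so the corollary reduces, via the universal property of quotients, to verifying the inclusion $\underline{\mathbf{J}}(W_1)\subseteq \ker\hat{p}_{W_1}$.

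Next I would recall from the proof of Theorem \ref{theoremb1} that $\underline{\mathbf{J}}(W_1)$ is the two-sided ideal generated by $K_{12}(W_1)$, and that the analysis carried out there (the direct verification following the solution of the linear system in the coefficients) shows that $K_{12}(W_1)$ is in turn generated, as a two-sided ideal of $\hat{\mathbf{D}}(W_1)$, by just the two elements $Q_{12}^{(1,1,1)}$ and $R_{12}^{(1,1,4)}$. Parts $(1)$ and $(2)$ of Lemma \ref{b2-lemma1} are precisely the statements that $\hat{p}_{W_1}$ sends these two generators to zero, so $\ker\hat{p}_{W_1}$, being a two-sided ideal, contains all of $\underline{\mathbf{J}}(W_1)$. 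Hence $\hat{p}_{W_1}$ descends to the desired homomorphism $p_{W_1}$.

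There is no real obstacle: all of the delicate computation has already been absorbed into Lemma \ref{b2-lemma1}, and the corollary is merely the universal-property repackaging of that verification. The only point to be careful about is that the target $\mathcal{Q}_1\rtimes \mathbb{Z}W_1$ is treated simply as an algebra, so one needs only the ideal inclusion $\underline{\mathbf{J}}(W_1)\subseteq \ker\hat{p}_{W_1}$, not any Hopf-algebra compatibility.
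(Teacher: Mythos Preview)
Your proposal is correct and takes essentially the same approach as the paper: the paper's proof is simply the sentence ``Lemma \ref{b2-lemma1} immediately yields the following corollary,'' and your argument is just a careful unpacking of that sentence via the universal property of the quotient. One minor wording point: the ideal $\underline{\mathbf{J}}(W_1)$ generated by $K_{12}(W_1)$ should be taken in $\hat{\mathbf{H}}(W_1)$ rather than $\hat{\mathbf{D}}(W_1)$, but this does not affect the argument.
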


We have a natural action of $\mathcal{Q}_1\rtimes \mathbb{Z}W_{1}$ on $\mathcal{Q}_1$ by $(tw)(t')=t\cdot w(t')$ for $t, t'\in \mathcal{Q}_1$ and $w\in W_{1}.$ This, composing with $\hat{p}_{W_1}$, gives an action of $\hat{\bf{H}}(W_{1})$ on $\mathcal{Q}_1$ such that $\mathcal{L}_1$ is invariant and $\mathcal{Q}_1$ and $\mathcal{L}_1$ are both module algebras over $\hat{\bf{H}}(W_{1}).$ By Corollary \ref{b2-corollary-module} we get that $p_{W_1}$ defines a structure of a module algebra of $\mathbf{H}(W_1)$ on $\mathcal{Q}_1$ such that $\mathcal{L}_1$ is a module subalgebra. Thus, we verify Conjecture \ref{introduction-conjecture1} for the Coxeter group $W_{1}.$

From the proof of Theorem \ref{theoremb1}, it is easy to see that Conjecture \ref{introduction-conjecture2} also holds for the Coxeter group $W_{1}.$

\section{Type $G_2$}
Let $W_2$ be the Coxeter group of type $G_2$ with generators $s_1,s_2$ and relations $s_1^{2}=s_2^{2}=1$ and $s_1s_2s_1s_2s_1s_2=s_2s_1s_2s_1s_2s_1.$ We then have the following theorem.

\begin{theorem}\label{theoremg2}
The Hecke-Hopf algebra $\mathbf{H}(W_2),$ associated to $W_2,$ has the following presentation$:$ it is generated by the elements $s_1,s_2,D_1,D_2$ with the following relations$:$
\begin{align}
s_1^{2}=s_2^{2}&=1, \quad s_1s_2s_1s_2s_1s_2=s_2s_1s_2s_1s_2s_1;\label{G-relation1}\\[0.1em]
D_{i}^{2}=&D_{i};\label{G-relation2}\\[0.1em]
s_{i}D_i+&D_{i}s_i=s_i-1;\label{G-relation3}\\[0.1em]
D_1s_2s_1&s_2s_1s_2=s_2s_1s_2s_1s_2D_1,\quad D_2s_1s_2s_1s_2s_1=s_1s_2s_1s_2s_1D_2;\label{G-relation4}\\[0.1em]
D_1s_2s_1&s_2s_1D_2=D_2D_1s_2s_1s_2s_1+s_2D_1D_2s_1s_2s_1+s_2s_1D_2D_1s_2s_1\notag\\
&\qquad \qquad +s_2s_1s_2D_1D_2s_1+s_2s_1s_2s_1D_2D_1+s_2s_1D_2s_1s_2s_1\notag\\
&\qquad \qquad +s_2s_1s_2s_1D_2s_1+s_2D_1s_2s_1s_2s_1+s_2s_1s_2D_1s_2s_1;\label{G-relation5}\\[0.1em]
D_2s_1s_2&D_1s_2s_1=s_1s_2D_1s_2s_1D_2;\label{G-relation6}\\[0.1em]
D_1s_2s_1&s_2D_1s_2=s_2s_1s_2D_1s_2D_1+s_2D_1s_2D_1s_2s_1+s_2s_1s_2D_1s_2s_1;\label{G-relation7}\\[0.1em]
D_2s_1s_2&s_1D_2s_1=s_1s_2s_1D_2s_1D_2+s_1D_2s_1D_2s_1s_2+s_1s_2s_1D_2s_1s_2;\label{G-relation8}\\[0.1em]
D_2D_1&D_2s_1s_2D_1+D_2s_1D_2D_1s_2D_1+D_2s_1D_2s_1s_2D_1\notag\\
&\qquad\qquad\qquad +D_2s_1s_2D_1D_2D_1+D_2s_1s_2D_1s_2D_1=\notag\\
&D_1D_2D_1D_2s_1s_2+D_1D_2s_1D_2D_1s_2+D_1D_2s_1D_2s_1s_2+D_1D_2s_1s_2D_1D_2\notag\\
&+D_1D_2s_1s_2D_1s_2+s_1D_2D_1D_2D_1s_2+s_1D_2D_1D_2s_1s_2+s_1D_2D_1s_2D_1D_2\notag\\
&+s_1D_2D_1s_2D_1s_2+s_1D_2s_1D_2D_1s_2+s_1D_2s_1D_2s_1s_2+s_1D_2s_1s_2D_1D_2\notag\\
&+s_1D_2s_1s_2D_1s_2+s_1s_2D_1D_2D_1D_2+s_1s_2D_1D_2D_1s_2+s_1s_2D_1s_2D_1D_2\notag\\
&+s_1s_2D_1s_2D_1s_2;\label{G-relation9}\\[0.1em]
s_1D_2s_1&D_2D_1D_2+s_1D_2D_1D_2s_1D_2+D_1D_2s_1D_2s_1D_2=\notag\\
&\qquad \qquad \qquad D_2s_1D_2s_1D_2D_1+D_2s_1D_2D_1D_2s_1+D_2D_1D_2s_1D_2s_1;\label{G-relation10}\\[0.1em]
s_2D_1s_2&D_1D_2D_1+s_2D_1D_2D_1s_2D_1+D_2D_1s_2D_1s_2D_1=\notag\\
&\qquad \qquad \qquad D_1s_2D_1s_2D_1D_2+D_1s_2D_1D_2D_1s_2+D_1D_2D_1s_2D_1s_2;\label{G-relation11}\\[0.1em]
D_1D_2&D_1D_2D_1D_2=D_2D_1D_2D_1D_2D_1.\label{G-relation12}
\end{align}
\end{theorem}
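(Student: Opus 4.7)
The strategy is to mimic the proof of Theorem~\ref{theoremb1}, with the four reflections of $W_1$ replaced by the six reflections of the dihedral group $W_2\cong I_2(6)$. Consequently the analysis of $K_{12}(W_2)\cap Ker~\varepsilon$ now takes place in degree $\leq 6$ rather than $\leq 4$. First I would fix notation for the six reflections, setting
\[
\widehat{D}_1 := D_{s_1},\ \widehat{D}_2 := s_1D_2s_1,\ \widehat{D}_3 := s_1s_2D_1s_2s_1,\ \widehat{D}_4 := s_2s_1D_2s_1s_2,\ \widehat{D}_5 := s_2D_1s_2,\ \widehat{D}_6 := D_2,
\]
together with the corresponding $\bar s_k$. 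Using \eqref{b2-module-algebra} I would tabulate the full list of values $\bar s_k(\widehat{D}_j)$, each of which equals either $\widehat{D}_{j'}$ or $1-\widehat{D}_{j'}$ for an index $j'$ determined by the dihedral combinatorics of $W_2$. The derivations $d_k$ extending $d_k(\widehat{D}_{k'})=\delta_{k,k'}$ by $d_k(xy)=d_k(x)y+\bar s_k(x)d_k(y)$ can then be evaluated mechanically on monomials.

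Next, by [BK, Proposition 7.33(b)], $K_{12}(W_2)$ is contained in the intersection of $Ker~\varepsilon$ with the common kernel of $d_1,\dots,d_6$ inside $\hat{\bf D}(W_2)_{\leq 6}$. I would write the evident $\Z W_2$-basis of $\hat{\bf D}(W_2)_{\leq 6}$ consisting of all products $\widehat{D}_{a_1}\cdots\widehat{D}_{a_\ell}$ with $\ell\leq 6$ and $a_t\neq a_{t+1}$, expand a generic element $x\in K_{12}(W_2)$ as an unknown integer combination of this basis, and impose $d_1(x)=\cdots=d_6(x)=0=\varepsilon(x)$. Solving the resulting homogeneous linear system degree by degree gives explicit generators of an ideal $J\supseteq K_{12}(W_2)$; translating these back into the variables $s_i,D_i$ produces exactly the relations \eqref{G-relation5}--\eqref{G-relation11}, while the residual top-degree generator $\widehat{D}_1\widehat{D}_2\widehat{D}_3\widehat{D}_4\widehat{D}_5\widehat{D}_6-\widehat{D}_6\widehat{D}_5\widehat{D}_4\widehat{D}_3\widehat{D}_2\widehat{D}_1$ yields \eqref{G-relation12} once \eqref{G-relation5} is used to rewrite $\widehat{D}_1\widehat{D}_2\widehat{D}_3\widehat{D}_4\widehat{D}_5\widehat{D}_6$ as the alternating product $D_1D_2D_1D_2D_1D_2$, exactly as \eqref{B-relation6} followed from \eqref{B-relation5} at the end of the proof of Theorem~\ref{theoremb1}.

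Finally, each generator of $J$ is verified to lie in $K_{12}(W_2)$ by identifying it, up to a $W_2$-action, with one of the elements $Q_{12}^{(\cdots)}$ or $R_{12}^{(\cdots)}$ constructed in [BK, Subsection 7.4], whose membership in $K_{12}(W_2)$ is guaranteed by [BK, Proposition 7.20]. The main obstacle is plainly the combinatorial bulk of the middle step: the $\Z W_2$-basis of $\hat{\bf D}(W_2)_{\leq 6}$ is an order of magnitude larger than in the $B_2$ case, so heavy use of the Dynkin involution $s_1\leftrightarrow s_2$ (i.e.\ $\widehat{D}_k\leftrightarrow \widehat{D}_{7-k}$) and a careful degree-by-degree reduction — peeling off the top-degree braid generator first, then the degree-$5$ quadratic-linear and Yang-Baxter type generators, and so on — are essential to keep the system of linear equations tractable and to isolate the relations \eqref{G-relation5}--\eqref{G-relation12} as the complete list of generators.
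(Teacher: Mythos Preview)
Your overall strategy is correct and matches the paper's own (omitted) proof, which simply says ``exactly the same as that of Theorem~\ref{theoremb1}, but much more complicated.'' The setup of $\widehat D_1,\dots,\widehat D_6$, the use of [BK, Proposition~7.33(b)], the linear-algebra computation in $\hat{\bf D}(W_2)_{\le 6}$, and the final rewriting of the top-degree generator via \eqref{G-relation5} are all right.

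There is, however, a genuine gap in your last paragraph. You assert that \emph{each} generator of $J$ can be identified, up to the $W_2$-action, with an element $Q_{12}^{(\cdots)}$ or $R_{12}^{(\cdots)}$ from [BK, Subsection~7.4]. This is precisely what fails in type~$G_2$. The generators corresponding to relations \eqref{G-relation9}, \eqref{G-relation10} and \eqref{G-relation11} --- written out as the elements \eqref{G2-K12W2-element1}, \eqref{G2-K12W2-element2}, \eqref{G2-K12W2-element3} later in the paper --- are \emph{not} of the quadratic-linear type~(a) nor of the Yang--Baxter type~(b) appearing in Conjecture~\ref{introduction-conjecture2}, and hence are not among the $Q_{12}^{(\cdots)}$, $R_{12}^{(\cdots)}$. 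Indeed, the very point of the $G_2$ computation in this paper is that these three extra generators furnish the counterexample to Conjecture~\ref{introduction-conjecture2}. If your verification step worked as stated, the conjecture would hold for $W_2$, contradicting the paper's conclusion.

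What you need instead for those three generators is a direct check that they lie in ${\bf K}(W_2)=\bigcap_{w\in W_2} w\,\hat{\bf D}(W_2)\,w^{-1}$ and in $Ker~\varepsilon$: i.e.\ verify by hand (using the table of values $\bar s_k(\widehat D_j)$ you already computed) that each of the three expressions is fixed under conjugation by $s_1$ and $s_2$ modulo the lower-degree generators already known to be in $K_{12}(W_2)$. Only the generators corresponding to \eqref{G-relation5}--\eqref{G-relation8} and \eqref{G-relation12} can be matched with the $Q$/$R$ families via [BK, Proposition~7.20].
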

\begin{proof}
The proof of this theorem is exactly the same as that of Theorem \ref{theoremb1}, but much more complicated. We skip the details.
\end{proof}

From the presentation in Theorem \ref{theoremg2}, we can get the following relations in $\mathbf{H}(W_2)$.
\begin{lemma} We have, in $\mathbf{H}(W_2),$
\begin{align}
D_2s_1s_2s_1s_2D_1=&D_1D_2s_1s_2s_1s_2+s_1D_2D_1s_2s_1s_2+s_1s_2D_1D_2s_1s_2\notag\\
&+s_1s_2s_1D_2D_1s_2+s_1s_2s_1s_2D_1D_2+s_1s_2D_1s_2s_1s_2\notag\\
&+s_1s_2s_1s_2D_1s_2+s_1D_2s_1s_2s_1s_2+s_1s_2s_1D_2s_1s_2.\label{G-relation13}
\end{align}
\begin{align}
&D_2D_1D_2s_1s_2s_1+D_2s_1D_2D_1s_2s_1+D_2s_1D_2s_1s_2s_1\notag\\
&\qquad \qquad +D_2s_1s_2D_1D_2s_1+D_2s_1s_2s_1D_2D_1+D_2s_1s_2s_1D_2s_1\notag\\
=&s_1s_2s_1D_2D_1D_2+s_1s_2D_1D_2s_1D_2+s_1s_2s_1D_2s_1D_2\notag\\
&\qquad \qquad+s_1D_2D_1s_2s_1D_2+D_1D_2s_1s_2s_1D_2+s_1D_2s_1s_2s_1D_2.\label{G-relation14}
\end{align}
\begin{align}
&D_1D_2D_1s_2s_1s_2+D_1s_2D_1D_2s_1s_2+D_1s_2D_1s_2s_1s_2\notag\\
&\qquad \qquad+D_1s_2s_1D_2D_1s_2+D_1s_2s_1s_2D_1D_2+D_1s_2s_1s_2D_1s_2\notag\\
=&s_2s_1s_2D_1D_2D_1+s_2s_1D_2D_1s_2D_1+s_2s_1s_2D_1s_2D_1\notag\\
&\qquad \qquad+s_2D_1D_2s_1s_2D_1+D_2D_1s_2s_1s_2D_1+s_2D_1s_2s_1s_2D_1.\label{G-relation15}
\end{align}
\begin{align}
&D_2D_1D_2s_1s_2s_1+D_2D_1s_2s_1D_2s_1+D_2s_1D_2D_1s_2s_1\notag\\
&\qquad+D_2s_1D_2s_1s_2D_1+D_2s_1s_2D_1D_2s_1+D_2s_1s_2s_1D_2D_1\notag\\
&\qquad\qquad+s_2D_1D_2s_1D_2s_1+s_2s_1D_2D_1D_2s_1+s_2s_1D_2s_1D_2D_1\notag\\
=&s_1s_2s_1D_2D_1D_2+s_1s_2D_1D_2s_1D_2+s_1D_2s_1s_2D_1D_2\notag\\
&\qquad+D_1s_2s_1D_2s_1D_2+s_1D_2D_1s_2s_1D_2+D_1D_2s_1s_2s_1D_2\notag\\
&\qquad\qquad+s_1D_2s_1D_2D_1s_2+s_1D_2D_1D_2s_1s_2+D_1D_2s_1D_2s_1s_2.\label{G-relation16}
\end{align}
\begin{align}
&D_1D_2D_1s_2s_1s_2+D_1D_2s_1s_2D_1s_2+D_1s_2D_1D_2s_1s_2\notag\\
&\qquad+D_1s_2D_1s_2s_1D_2+D_1s_2s_1D_2D_1s_2+D_1s_2s_1s_2D_1D_2\notag\\
&\qquad\qquad+s_1D_2D_1s_2D_1s_2+s_1s_2D_1D_2D_1s_2+s_1s_2D_1s_2D_1D_2\notag\\
=&s_2s_1s_2D_1D_2D_1+s_2s_1D_2D_1s_2D_1+s_2D_1s_2s_1D_2D_1\notag\\
&\qquad+D_2s_1s_2D_1s_2D_1+s_2D_1D_2s_1s_2D_1+D_2D_1s_2s_1s_2D_1\notag\\
&\qquad\qquad+s_2D_1s_2D_1D_2s_1+s_2D_1D_2D_1s_2s_1+D_2D_1s_2D_1s_2s_1.\label{G-relation17}
\end{align}
\begin{align}
D_1s_2D_1s_2D_1s_2=s_2D_1s_2D_1s_2D_1.\label{G-relation18}
\end{align}
\begin{align}
D_2s_1D_2s_1D_2s_1=s_1D_2s_1D_2s_1D_2.\label{G-relation19}
\end{align}
\begin{align}
D_1D_2&D_1s_2s_1D_2+D_1s_2D_1D_2s_1D_2+D_1s_2D_1s_2s_1D_2\notag\\
&\qquad\qquad\qquad +D_1s_2s_1D_2D_1D_2+D_1s_2s_1D_2s_1D_2=\notag\\
&D_2D_1D_2D_1s_2s_1+D_2D_1s_2D_1D_2s_1+D_2D_1s_2D_1s_2s_1+D_2D_1s_2s_1D_2D_1\notag\\
&+D_2D_1s_2s_1D_2s_1+s_2D_1D_2D_1D_2s_1+s_2D_1D_2D_1s_2s_1+s_2D_1D_2s_1D_2D_1\notag\\
&+s_2D_1D_2s_1D_2s_1+s_2D_1s_2D_1D_2s_1+s_2D_1s_2D_1s_2s_1+s_2D_1s_2s_1D_2D_1\notag\\
&+s_2D_1s_2s_1D_2s_1+s_2s_1D_2D_1D_2D_1+s_2s_1D_2D_1D_2s_1+s_2s_1D_2s_1D_2D_1\notag\\
&+s_2s_1D_2s_1D_2s_1;\label{G-relation20}
\end{align}
\begin{align}
&s_1s_2D_1D_2D_1D_2+s_1D_2D_1s_2D_1D_2+s_1D_2D_1D_2D_1s_2\notag\\
&\qquad+D_1s_2s_1D_2D_1D_2+D_1s_2D_1D_2s_1D_2+D_1D_2s_1s_2D_1D_2\notag\\
&\qquad\qquad+D_1D_2s_1D_2D_1s_2+D_1D_2D_1s_2s_1D_2+D_1D_2D_1D_2s_1s_2\notag\\
=&s_2s_1D_2D_1D_2D_1+s_2D_1D_2s_1D_2D_1+s_2D_1D_2D_1D_2s_1\notag\\
&\qquad+D_2s_1s_2D_1D_2D_1+D_2s_1D_2D_1s_2D_1+D_2D_1s_2s_1D_2D_1\notag\\
&\qquad\qquad+D_2D_1s_2D_1D_2s_1+D_2D_1D_2s_1s_2D_1+D_2D_1D_2D_1s_2s_1;\label{G-relation21}
\end{align}
\begin{align}
&s_1D_2D_1D_2D_1D_2+D_1D_2s_1D_2D_1D_2+D_1D_2D_1D_2s_1D_2\notag\\
&\qquad\quad=D_2s_1D_2D_1D_2D_1+D_2D_1D_2s_1D_2D_1+D_2D_1D_2D_1D_2s_1;\label{G-relation22}
\end{align}
\begin{align}
&D_1s_2D_1D_2D_1D_2+D_1D_2D_1s_2D_1D_2+D_1D_2D_1D_2D_1s_2\notag\\
&\qquad\quad=s_2D_1D_2D_1D_2D_1+D_2D_1s_2D_1D_2D_1+D_2D_1D_2D_1s_2D_1.\label{G-relation23}
\end{align}
\end{lemma}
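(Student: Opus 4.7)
My plan is to prove each of \eqref{G-relation13}--\eqref{G-relation23} by direct manipulation of the defining relations of $\mathbf{H}(W_2)$ given in Theorem \ref{theoremg2}, following exactly the bookkeeping strategy used in the proof of Lemma \ref{b2-lemma-relations}. The tools at my disposal are the rank-1 identities \eqref{G-relation2} and \eqref{G-relation3} (which yield the indispensable collapses $D_i s_i D_i = -D_i$ and $s_i D_i s_i = 1-s_i-D_i$), the linear braid relations \eqref{G-relation4}, the quadratic-linear relations \eqref{G-relation5}--\eqref{G-relation8}, and the cubic-type relations \eqref{G-relation9}--\eqref{G-relation11}. The braid relation \eqref{G-relation1} will be used silently to reorder pure $W_2$-monomials when needed.

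The derivation I have in mind proceeds in layers. First, for \eqref{G-relation13} (which is \eqref{G-relation5} with $1$ and $2$ formally swapped), I would start from \eqref{G-relation5}, multiply on the left by $s_1 s_2 s_1 s_2 s_1$ and on the right by $s_1$, and then push the outer $s_i$'s inward using \eqref{G-relation4} (together with $s_i D_i = s_i - 1 - D_i s_i$), so that the left side becomes $D_2 s_1 s_2 s_1 s_2 D_1$; alternatively, one may argue via the $\mathbb{Z}W_2$-module algebra action \eqref{b2-module-algebra} by the longest element. Next, to produce \eqref{G-relation14}, \eqref{G-relation15}, \eqref{G-relation16}, \eqref{G-relation17}, I would right-multiply (or left-multiply) \eqref{G-relation5} and \eqref{G-relation13} by $D_1$ or $D_2$, use $D_i^2 = D_i$ to expand the product, and then collect using the rank-1 relations — this is precisely the passage from \eqref{B-relation5} to \eqref{B-relation8}--\eqref{B-relation9}. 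The commutation relations \eqref{G-relation18} and \eqref{G-relation19} come from \eqref{G-relation14}/\eqref{G-relation15} by left-multiplying by $D_1$/$D_2$, collapsing through $D_i s_i D_i = -D_i$, and comparing with the analogue obtained by right-multiplication, in direct analogy to the proof of \eqref{B-relation10}.

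For the longest identities \eqref{G-relation20}--\eqref{G-relation23}, I would feed \eqref{G-relation9}, \eqref{G-relation10}, \eqref{G-relation11} into the same multiply-and-collapse machine: relations \eqref{G-relation20} and \eqref{G-relation21} arise from \eqref{G-relation14}/\eqref{G-relation15} after an additional right multiplication by $D_2$, while \eqref{G-relation22} and \eqref{G-relation23} come from multiplying \eqref{G-relation10} and \eqref{G-relation11} by $D_1$ on one side and using $D_1 s_1 D_1 = -D_1$ to eliminate the $s_1 D_2 s_1$ factors. The hard part is not conceptual but purely combinatorial: the intermediate expressions carry between six and seventeen terms per side, and without a fixed normal form the cancellations are easy to miscount. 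I would therefore commit at the outset to a single normal form (say, writing every monomial with all $s_i$'s pushed to the right of all $D_j$'s, reduced modulo \eqref{G-relation4}) and verify for each of \eqref{G-relation13}--\eqref{G-relation23} that, after rewriting both sides in this form and substituting the already-proved identities, the two sides coincide term-by-term.
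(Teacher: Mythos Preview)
Your overall strategy --- derive each identity by multiplying a defining relation of Theorem~\ref{theoremg2} on one or both sides by $D_i$ or $s_i$, collapse via the rank-$1$ identities $D_i^2=D_i$, $D_is_iD_i=-D_i$, $s_iD_i+D_is_i=s_i-1$, and compare two such products --- is exactly the paper's strategy, and your treatment of \eqref{G-relation13}--\eqref{G-relation15} matches the paper's essentially verbatim (the paper obtains \eqref{G-relation13} by conjugating \eqref{G-relation5} by $s_2$, which is your ``longest element'' alternative).

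Two places where your proposed route differs from the paper's, and where your sketch as written would likely not close without adjustment:

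\textbf{(i) Relations \eqref{G-relation18}--\eqref{G-relation19}.} You propose to extract these from \eqref{G-relation14}/\eqref{G-relation15} in analogy with the $B_2$ derivation of \eqref{B-relation10}. But \eqref{G-relation18} is a relation purely among $\widetilde D_1=D_1$, $\widetilde D_3=s_1s_2D_1s_2s_1$, $\widetilde D_5=s_2D_1s_2$, whereas \eqref{G-relation14}/\eqref{G-relation15} mix all six $\widetilde D_k$; it is not clear how your multiplication would force the $D_2$-terms to cancel. The paper instead goes directly to \eqref{G-relation7}, which in the $\widetilde D$-notation reads $\widetilde D_1\widetilde D_3+\widetilde D_3\widetilde D_5-\widetilde D_5\widetilde D_1-\widetilde D_3=0$; together with its $s_2$-conjugate $\widetilde D_5\widetilde D_3+\widetilde D_3\widetilde D_1-\widetilde D_1\widetilde D_5-\widetilde D_3=0$ one reads off $\widetilde D_1\widetilde D_5\widetilde D_1=\widetilde D_1\widetilde D_3\widetilde D_5=\widetilde D_5\widetilde D_3\widetilde D_1=\widetilde D_5\widetilde D_1\widetilde D_5$, which is \eqref{G-relation18}. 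This is both shorter and avoids the cancellation issue.

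\textbf{(ii) Relations \eqref{G-relation20}--\eqref{G-relation22}.} A single multiplication of a defining relation does not suffice here. In the paper, \eqref{G-relation20} starts from \eqref{G-relation9} conjugated by $s_2$, expands all the $s_2D_2$ and $D_2s_2$ factors via the rank-$1$ relations, then feeds in \eqref{G-relation15} \emph{twice} (once on each side) to reduce to an auxiliary seven-term identity (the paper's \eqref{G2-equality19}), which is finally handled by the same mechanism as \eqref{G-relation16}. Relation \eqref{G-relation21} is then obtained by combining \eqref{G-relation9} with the just-proved \eqref{G-relation20}, and \eqref{G-relation22} requires \eqref{G-relation9}, \eqref{G-relation20}, \eqref{G-relation10} and \eqref{G-relation19} together. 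So your ``multiply-and-collapse'' engine is the right one, but these long relations are obtained by \emph{chaining} several previously proved identities rather than by a single application to \eqref{G-relation10} or \eqref{G-relation14}; your plan should budget for that.
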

\begin{proof}
\eqref{G-relation13} From \eqref{G-relation5} we can easily get that
\begin{align*}
s_2D_1s_2s_1s_2s_1D_2s_2=&s_2D_2D_1s_2s_1s_2s_1s_2+D_1D_2s_1s_2s_1s_2+s_1D_2D_1s_2s_1s_2\\
&+s_1s_2D_1D_2s_1s_2+s_1s_2s_1D_2D_1s_2+s_1D_2s_1s_2s_1s_2\\
&+s_1s_2s_1D_2s_1s_2+D_1s_2s_1s_2s_1s_2+s_1s_2D_1s_2s_1s_2.
\end{align*}
From this, we can easily get that \eqref{G-relation13} holds.

\eqref{G-relation14} From \eqref{G-relation13} we get that
\begin{align*}
D_2s_1s_2s_1s_2D_1=&D_2D_1D_2s_1s_2s_1s_2+D_2s_1D_2D_1s_2s_1s_2+D_2s_1s_2D_1D_2s_1s_2\\
&+D_2s_1s_2s_1D_2D_1s_2+D_2s_1s_2s_1s_2D_1D_2+D_2s_1s_2D_1s_2s_1s_2\\
&+D_2s_1s_2s_1s_2D_1s_2+D_2s_1D_2s_1s_2s_1s_2+D_2s_1s_2s_1D_2s_1s_2.
\end{align*}
Thus, we have
\begin{align*}
&D_2D_1D_2s_1s_2s_1+D_2s_1D_2D_1s_2s_1+D_2s_1s_2D_1D_2s_1\\
&\qquad +D_2s_1s_2s_1D_2D_1+D_2s_1s_2s_1s_2D_1D_2s_2+D_2s_1s_2D_1s_2s_1\\
&\qquad \qquad+D_2s_1s_2s_1s_2D_1+D_2s_1D_2s_1s_2s_1+D_2s_1s_2s_1D_2s_1\\
=&D_1D_2s_1s_2s_1+s_1D_2D_1s_2s_1+s_1s_2D_1D_2s_1\\
&\qquad +s_1s_2s_1D_2D_1+s_1s_2s_1s_2D_1D_2s_2+s_1s_2D_1s_2s_1\\
&\qquad \qquad+s_1s_2s_1s_2D_1+s_1D_2s_1s_2s_1+s_1s_2s_1D_2s_1.
\end{align*}
Similarly, we have
\begin{align*}
&s_2D_2D_1s_2s_1s_2s_1+D_1D_2s_1s_2s_1+s_1D_2D_1s_2s_1\\
&\qquad+s_1s_2D_1D_2s_1+s_1s_2s_1D_2D_1+s_1D_2s_1s_2s_1\\
&\qquad \qquad+s_1s_2s_1D_2s_1+D_1s_2s_1s_2s_1+s_1s_2D_1s_2s_1\\
=&s_2D_2D_1s_2s_1s_2s_1D_2+D_1D_2s_1s_2s_1D_2+s_1D_2D_1s_2s_1D_2\\
&\qquad+s_1s_2D_1D_2s_1D_2+s_1s_2s_1D_2D_1D_2+s_1D_2s_1s_2s_1D_2\\
&\qquad \qquad+s_1s_2s_1D_2s_1D_2+D_1s_2s_1s_2s_1D_2+s_1s_2D_1s_2s_1D_2.
\end{align*}
Thus, in order to prove that \eqref{G-relation14} holds, it suffices to show that
\begin{align*}
&D_2s_1s_2s_1s_2D_1D_2s_2+D_2s_1s_2s_1s_2D_1+s_2D_2D_1s_2s_1s_2s_1+D_1s_2s_1s_2s_1\\
&\qquad\qquad=s_1s_2s_1s_2D_1D_2s_2+s_1s_2s_1s_2D_1+s_2D_2D_1s_2s_1s_2s_1D_2+D_1s_2s_1s_2s_1D_2.
\end{align*}
It is easy to see that the left-hand side and the right-hand side are both equal to
$$-D_2s_1s_2s_1s_2D_1s_2D_2+s_2D_1s_2s_1s_2s_1.$$

\eqref{G-relation15} It can be proved similarly.

\eqref{G-relation16} By \eqref{G-relation14}, in order to prove that \eqref{G-relation16} holds, it suffices to show that
\begin{align}\label{G2-equality1}
&s_1D_2s_1s_2D_1D_2+s_1D_2s_1D_2D_1s_2+s_1D_2D_1D_2s_1s_2+D_1s_2s_1D_2s_1D_2\notag\\
&\qquad+D_1D_2s_1D_2s_1s_2+D_2s_1D_2s_1s_2s_1+D_2s_1s_2s_1D_2s_1\notag\\
=&s_1D_2s_1s_2s_1D_2+s_1s_2s_1D_2s_1D_2+s_2s_1D_2s_1D_2D_1+s_2s_1D_2D_1D_2s_1\notag\\
&\qquad+s_2D_1D_2s_1D_2s_1+D_2s_1D_2s_1s_2D_1+D_2D_1s_2s_1D_2s_1.
\end{align}
By \eqref{G-relation8}, we see that, in order to show \eqref{G2-equality1}, it suffices to prove that
\begin{align}\label{G2-equality2}
&s_1D_2s_1s_2D_1D_2+s_1D_2s_1D_2D_1s_2+s_1D_2D_1D_2s_1s_2\notag\\
&\qquad+D_1s_2s_1D_2s_1D_2+D_1D_2s_1D_2s_1s_2+s_1D_2s_1D_2s_1s_2\notag\\
=&s_2s_1D_2s_1D_2D_1+s_2s_1D_2D_1D_2s_1+s_2D_1D_2s_1D_2s_1\notag\\
&\qquad +D_2s_1D_2s_1s_2D_1+D_2D_1s_2s_1D_2s_1+s_2s_1D_2s_1D_2s_1.
\end{align}
From \eqref{G-relation5} we get that
\begin{align}\label{G2-equality3}
s_1D_2s_2D_1s_2s_1s_2s_1D_2s_1=&s_1D_2s_2D_2D_1s_2s_1s_2+s_1D_2D_1D_2s_1s_2+s_1D_2s_1D_2D_1s_2\notag\\
&+s_1D_2s_1s_2D_1D_2+s_1D_2s_1s_2s_1D_2D_1s_1+s_1D_2s_1D_2s_1s_2\notag\\
&+s_1D_2s_1s_2s_1D_2+s_1D_2D_1s_2s_1s_2+s_1D_2s_1s_2D_1s_2;
\end{align}
From \eqref{G-relation13} we get that
\begin{align}\label{G2-equality4}
s_1D_2s_1s_2s_1s_2D_1s_2D_2s_1=&s_1D_1D_2s_1s_2s_1D_2s_1+D_2D_1s_2s_1D_2s_1+s_2D_1D_2s_1D_2s_1\notag\\
&+s_2s_1D_2D_1D_2s_1+s_2s_1s_2D_1D_2s_2D_2s_1+s_2D_1s_2s_1D_2s_1\notag\\
&+s_2s_1s_2D_1D_2s_1+D_2s_1s_2s_1D_2s_1+s_2s_1D_2s_1D_2s_1.
\end{align}
Note that $s_1D_2s_1s_2D_1s_2=s_2D_1s_2s_1D_2s_1,$ $s_1s_2s_1s_2D_1s_2=s_2D_1s_2s_1s_2s_1$ and $D_2s_2D_2=-D_2$. Thus, by \eqref{G2-equality3} and \eqref{G2-equality4} we get that
\begin{align}\label{G2-equality5}
&s_1D_2D_1D_2s_1s_2+s_1D_2s_1D_2D_1s_2+s_1D_2s_1s_2D_1D_2\notag\\
&\qquad +s_1D_2s_1s_2s_1D_2D_1s_1+s_1D_2s_1D_2s_1s_2+s_1D_2s_1s_2s_1D_2\notag\\
=&s_1D_1D_2s_1s_2s_1D_2s_1+D_2D_1s_2s_1D_2s_1+s_2D_1D_2s_1D_2s_1\notag\\
&\qquad +s_2s_1D_2D_1D_2s_1+D_2s_1s_2s_1D_2s_1+s_2s_1D_2s_1D_2s_1.
\end{align}
Thus, in order to show that \eqref{G2-equality2} holds, it suffices to show that
\begin{align}\label{G2-equality6}
&D_1s_2s_1D_2s_1D_2+D_1D_2s_1D_2s_1s_2+D_2s_1s_2s_1D_2s_1+s_1D_1D_2s_1s_2s_1D_2s_1\notag\\
&\qquad=s_2s_1D_2s_1D_2D_1+D_2s_1D_2s_1s_2D_1+s_1D_2s_1s_2s_1D_2+s_1D_2s_1s_2s_1D_2D_1s_1,
\end{align}
which is equivalent to
\begin{align}\label{G2-equality7}
&D_1s_2s_1D_2s_1D_2+D_1D_2s_1D_2s_1s_2-D_1s_1D_2s_1s_2s_1D_2s_1\notag\\
=&s_2s_1D_2s_1D_2D_1+D_2s_1D_2s_1s_2D_1-s_1D_2s_1s_2s_1D_2s_1D_1.
\end{align}
By \eqref{G-relation8} it is easy to see that the left-hand side and the right-hand side of \eqref{G2-equality7} are both equal to $-s_2s_1D_2s_1s_2D_1=-D_1s_2s_1D_2s_1s_2.$

\eqref{G-relation17} It can be proved similarly.

\eqref{G-relation18} Set $\widetilde{D}_1 :=D_1,$ $\widetilde{D}_2 :=s_1D_2s_1,$ $\widetilde{D}_3 :=s_1s_2D_1s_2s_1,$ $\widetilde{D}_4 :=s_2s_1D_2s_1s_2,$ $\widetilde{D}_5 :=s_2D_1s_2,$ and $\widetilde{D}_6 :=D_2.$ Thus, from \eqref{G-relation7} it is easy to see that
\begin{align}\label{G2-equality8}
\widetilde{D}_3\widetilde{D}_5+\widetilde{D}_1\widetilde{D}_3-\widetilde{D}_5\widetilde{D}_1-\widetilde{D}_3=0,
\end{align}
and
\begin{align}\label{G2-equality9}
\widetilde{D}_5\widetilde{D}_3+\widetilde{D}_3\widetilde{D}_1-\widetilde{D}_1\widetilde{D}_5-\widetilde{D}_3=0.
\end{align}
From \eqref{G2-equality8} we deduce that $\widetilde{D}_1\widetilde{D}_5\widetilde{D}_1=\widetilde{D}_1\widetilde{D}_3\widetilde{D}_5$; from \eqref{G2-equality9} we deduce that $\widetilde{D}_1\widetilde{D}_5\widetilde{D}_1=\widetilde{D}_5\widetilde{D}_3\widetilde{D}_1.$ From \eqref{G2-equality9} we also get that $\widetilde{D}_5\widetilde{D}_3\widetilde{D}_1=\widetilde{D}_5\widetilde{D}_1\widetilde{D}_5.$ So we have $\widetilde{D}_1\widetilde{D}_5\widetilde{D}_1=\widetilde{D}_5\widetilde{D}_1\widetilde{D}_5,$ which is exactly \eqref{G-relation18}.

\eqref{G-relation19} It can be proved similarly.

\eqref{G-relation20} By \eqref{G-relation9} we get that
\begin{align}
s_2D_2D_1&D_2s_1s_2D_1s_2+s_2D_2s_1D_2D_1s_2D_1s_2+s_2D_2s_1D_2s_1s_2D_1s_2\notag\\
&\qquad\qquad\qquad +s_2D_2s_1s_2D_1D_2D_1s_2+s_2D_2s_1s_2D_1s_2D_1s_2=\notag\\
&s_2D_1D_2D_1D_2s_1+s_2D_1D_2s_1D_2D_1+s_2D_1D_2s_1D_2s_1+s_2D_1D_2s_1s_2D_1D_2s_2\notag\\
&+s_2D_1D_2s_1s_2D_1+s_2s_1D_2D_1D_2D_1+s_2s_1D_2D_1D_2s_1+s_2s_1D_2D_1s_2D_1D_2s_2\notag\\
&+s_2s_1D_2D_1s_2D_1+s_2s_1D_2s_1D_2D_1+s_2s_1D_2s_1D_2s_1+s_2s_1D_2s_1s_2D_1D_2s_2\notag\\
&+s_2s_1D_2s_1s_2D_1+s_2s_1s_2D_1D_2D_1D_2s_2+s_2s_1s_2D_1D_2D_1+s_2s_1s_2D_1s_2D_1D_2s_2\notag\\
&+s_2s_1s_2D_1s_2D_1.\label{G2-equality10}
\end{align}
By the presentation we have
\begin{align}
s_2D_2D_1&D_2s_1s_2D_1s_2+s_2D_2s_1D_2D_1s_2D_1s_2+s_2D_2s_1D_2s_1s_2D_1s_2\notag\\
&\qquad\qquad +s_2D_2s_1s_2D_1D_2D_1s_2+s_2D_2s_1s_2D_1s_2D_1s_2=\notag\\
&(-D_2s_2+s_2-1)D_1D_2s_1s_2D_1s_2+(-D_2s_2+s_2-1)s_1D_2D_1s_2D_1s_2\notag\\
&\qquad+(-D_2s_2+s_2-1)s_1D_2s_1s_2D_1s_2+(-D_2s_2+s_2-1)s_1s_2D_1D_2D_1s_2\notag\\
&\qquad\qquad+(-D_2s_2+s_2-1)s_1s_2D_1s_2D_1s_2,\label{G2-equality11}
\end{align}

and

\begin{align}
s_2D_1&D_2s_1s_2D_1D_2s_2+s_2s_1D_2D_1s_2D_1D_2s_2+s_2s_1D_2s_1s_2D_1D_2s_2\notag\\
&\qquad\qquad +s_2s_1s_2D_1D_2D_1D_2s_2+s_2s_1s_2D_1s_2D_1D_2s_2=\notag\\
&s_2D_1D_2s_1s_2D_1(-s_2D_2+s_2-1)+s_2s_1D_2D_1s_2D_1(-s_2D_2+s_2-1)\notag\\
&\qquad+s_2s_1D_2s_1s_2D_1(-s_2D_2+s_2-1)+s_2s_1s_2D_1D_2D_1(-s_2D_2+s_2-1)\notag\\
&\qquad\qquad +s_2s_1s_2D_1s_2D_1(-s_2D_2+s_2-1).\label{G2-equality12}
\end{align}

By \eqref{G-relation15} we have
\begin{align}
(s_2D_1&D_2s_1s_2D_1+s_2s_1D_2D_1s_2D_1+s_2s_1D_2s_1s_2D_1\notag\\
&\qquad\quad+s_2s_1s_2D_1D_2D_1+s_2s_1s_2D_1s_2D_1)s_2D_2=\notag\\
&(D_1D_2D_1s_2s_1s_2+D_1s_2D_1D_2s_1s_2+D_1s_2D_1s_2s_1s_2\notag\\
&\qquad+D_1s_2s_1D_2D_1s_2+D_1s_2s_1s_2D_1D_2+D_1s_2s_1s_2D_1s_2\notag\\
&\qquad\qquad-D_2D_1s_2s_1s_2D_1-s_2D_1s_2s_1s_2D_1+s_2s_1D_2s_1s_2D_1)s_2D_2=\notag\\
&D_1D_2D_1s_2s_1D_2+D_1s_2D_1D_2s_1D_2+D_1s_2D_1s_2s_1D_2+D_1s_2s_1D_2D_1D_2\notag\\
&\qquad+D_1s_2s_1D_2s_1D_2-D_2D_1s_2s_1s_2D_1s_2D_2-s_2D_1s_2s_1s_2D_1s_2D_2.\label{G2-equality13}
\end{align}

By \eqref{G-relation15} we also have
\begin{align}
&s_2D_1D_2D_1s_2s_1+s_2D_1s_2D_1D_2s_1+s_2D_1s_2D_1s_2s_1\notag\\
&\qquad \qquad+s_2D_1s_2s_1D_2D_1+s_2D_1s_2s_1s_2D_1D_2s_2+s_2D_1s_2s_1s_2D_1\notag\\
=&s_1s_2D_1D_2D_1s_2+s_1D_2D_1s_2D_1s_2+s_1s_2D_1s_2D_1s_2\notag\\
&\qquad \qquad+D_1D_2s_1s_2D_1s_2+s_2D_2D_1s_2s_1s_2D_1s_2+D_1s_2s_1s_2D_1s_2.\label{G2-equality14}
\end{align}
Thus, we get that
\begin{align}
D_2s_2&(D_1D_2s_1s_2D_1s_2+s_1D_2D_1s_2D_1s_2+s_1D_2s_1s_2D_1s_2\notag\\
&\qquad\qquad +s_1s_2D_1D_2D_1s_2+s_1s_2D_1s_2D_1s_2)=\notag\\
&D_2s_2(s_2D_1D_2D_1s_2s_1+s_2D_1s_2D_1D_2s_1+s_2D_1s_2D_1s_2s_1\notag\\
&\qquad+s_2D_1s_2s_1D_2D_1+s_2D_1s_2s_1s_2D_1D_2s_2+s_2D_1s_2s_1s_2D_1\notag\\
&\qquad\qquad-s_2D_2D_1s_2s_1s_2D_1s_2-D_1s_2s_1s_2D_1s_2+s_1D_2s_1s_2D_1s_2)=\notag\\
&D_2D_1D_2D_1s_2s_1+D_2D_1s_2D_1D_2s_1+D_2D_1s_2D_1s_2s_1\notag\\
&\qquad +D_2D_1s_2s_1D_2D_1+D_2D_1s_2s_1s_2D_1D_2s_2+D_2D_1s_2s_1s_2D_1\notag\\
&\qquad\qquad-D_2D_1s_2s_1s_2D_1s_2-D_2s_2D_1s_2s_1s_2D_1s_2+D_2s_2s_1D_2s_1s_2D_1s_2.\label{G2-equality15}
\end{align}

So we have
\begin{align}
D_1D_2&D_1s_2s_1D_2+D_1s_2D_1D_2s_1D_2+D_1s_2D_1s_2s_1D_2\notag\\
&\qquad\qquad\qquad +D_1s_2s_1D_2D_1D_2+D_1s_2s_1D_2s_1D_2=\notag\\
&s_2D_1D_2D_1D_2s_1+s_2D_1D_2s_1D_2D_1+s_2D_1D_2s_1D_2s_1+s_2D_1D_2s_1s_2D_1\notag\\
&+s_2s_1D_2D_1D_2D_1+s_2s_1D_2D_1D_2s_1+s_2s_1D_2D_1s_2D_1+s_2s_1D_2s_1D_2D_1\notag\\
&+s_2s_1D_2s_1D_2s_1+s_2s_1D_2s_1s_2D_1+s_2s_1s_2D_1D_2D_1+s_2s_1s_2D_1s_2D_1\notag\\
&+D_1D_2s_1s_2D_1s_2+s_1D_2D_1s_2D_1s_2+s_1D_2s_1s_2D_1s_2+s_1s_2D_1D_2D_1s_2\notag\\
&+s_1s_2D_1s_2D_1s_2-s_2D_1D_2s_1s_2D_1-s_2s_1D_2D_1s_2D_1-s_2s_1D_2s_1s_2D_1\notag\\
&-s_2s_1s_2D_1D_2D_1-s_2s_1s_2D_1s_2D_1+D_2D_1D_2D_1s_2s_1+D_2D_1s_2D_1D_2s_1\notag\\
&+D_2D_1s_2D_1s_2s_1+D_2D_1s_2s_1D_2D_1+D_2D_1s_2s_1s_2D_1D_2s_2\notag\\
&+D_2D_1s_2s_1s_2D_1-D_2D_1s_2s_1s_2D_1s_2-D_2s_2D_1s_2s_1s_2D_1s_2\notag\\
&+D_2D_1s_2s_1D_2s_1+D_2D_1s_2s_1s_2D_1s_2D_2+s_2D_1s_2s_1s_2D_1s_2D_2.\label{G2-equality16}
\end{align}

Note that
\begin{align}
D_2D_1s_2s_1s_2D_1D_2s_2+D_2D_1s_2s_1s_2D_1-D_2D_1s_2s_1s_2D_1s_2+D_2D_1s_2s_1s_2D_1s_2D_2=0,\label{G2-equality17}
\end{align}
and
\begin{align}
-D_2s_2&D_1s_2s_1s_2D_1s_2+s_2D_1s_2s_1s_2D_1s_2D_2=\notag\\
&\qquad-D_2s_1s_2D_1s_2D_1-D_2D_1s_2D_1s_2s_1-D_2s_1s_2D_1s_2s_1\notag\\
&\qquad\qquad+s_1s_2D_1s_2D_1D_2+D_1s_2D_1s_2s_1D_2+s_1s_2D_1s_2s_1D_2=\notag\\
&-D_2s_1s_2D_1s_2D_1-D_2D_1s_2D_1s_2s_1+s_1s_2D_1s_2D_1D_2+D_1s_2D_1s_2s_1D_2.\label{G2-equality18}
\end{align}

By \eqref{G2-equality16}, in order to prove \eqref{G-relation20} it suffices to show that
\begin{align}
D_1D_2&s_1s_2D_1s_2+s_1D_2D_1s_2D_1s_2+s_1D_2s_1s_2D_1s_2+s_1s_2D_1D_2D_1s_2\notag\\
&\qquad+s_1s_2D_1s_2D_1s_2+s_1s_2D_1s_2D_1D_2+D_1s_2D_1s_2s_1D_2=\notag\\
s_2D_1&D_2D_1s_2s_1+s_2D_1s_2D_1D_2s_1+s_2D_1s_2D_1s_2s_1+s_2D_1s_2s_1D_2D_1\notag\\
&\qquad+s_2D_1s_2s_1D_2s_1+D_2s_1s_2D_1s_2D_1+D_2D_1s_2D_1s_2s_1,\label{G2-equality19}
\end{align}
which can be proved in exactly the same way as \eqref{G2-equality2} by noting that $s_1D_2s_1s_2D_1s_2=s_2D_1s_2s_1D_2s_1.$

\eqref{G-relation21} By \eqref{G-relation9} and \eqref{G-relation20} we get that
\begin{align}
D_1D_2D_1D_2&s_1s_2+D_1D_2s_1D_2D_1s_2+D_1D_2s_1D_2s_1s_2+D_1D_2s_1s_2D_1D_2\notag\\
+D_1D_2s_1&s_2D_1s_2+s_1D_2D_1D_2D_1s_2+s_1D_2D_1D_2s_1s_2+s_1D_2D_1s_2D_1D_2\notag\\
+s_1D_2&D_1s_2D_1s_2+s_1D_2s_1D_2D_1s_2+s_1D_2s_1D_2s_1s_2+s_1D_2s_1s_2D_1D_2\notag\\
+s_1&D_2s_1s_2D_1s_2+s_1s_2D_1D_2D_1D_2+s_1s_2D_1D_2D_1s_2+s_1s_2D_1s_2D_1D_2\notag\\
+&s_1s_2D_1s_2D_1s_2+D_1D_2D_1s_2s_1D_2+D_1s_2D_1D_2s_1D_2+D_1s_2D_1s_2s_1D_2\notag\\
&+D_1s_2s_1D_2D_1D_2+D_1s_2s_1D_2s_1D_2=\notag\\
D_2D_1D_2s_1&s_2D_1+D_2s_1D_2D_1s_2D_1+D_2s_1D_2s_1s_2D_1+D_2s_1s_2D_1D_2D_1\notag\\
+D_2s_1s_2&D_1s_2D_1+D_2D_1D_2D_1s_2s_1+D_2D_1s_2D_1D_2s_1+D_2D_1s_2D_1s_2s_1\notag\\
+D_2D_1&s_2s_1D_2D_1+D_2D_1s_2s_1D_2s_1+s_2D_1D_2D_1D_2s_1+s_2D_1D_2D_1s_2s_1\notag\\
+s_2&D_1D_2s_1D_2D_1+s_2D_1D_2s_1D_2s_1+s_2D_1s_2D_1D_2s_1+s_2D_1s_2D_1s_2s_1\notag\\
+&s_2D_1s_2s_1D_2D_1+s_2D_1s_2s_1D_2s_1+s_2s_1D_2D_1D_2D_1+s_2s_1D_2D_1D_2s_1\notag\\
&+s_2s_1D_2s_1D_2D_1+s_2s_1D_2s_1D_2s_1.\label{G2-equality20}
\end{align}

By \eqref{G2-equality20}, we see that, in order to prove \eqref{G-relation21}, it suffices to show that
\begin{align}
D_1D_2s_1&D_2s_1s_2+D_1D_2s_1s_2D_1s_2+s_1D_2D_1D_2s_1s_2+s_1D_2D_1s_2D_1s_2\notag\\
+s_1D_2&s_1D_2D_1s_2+s_1D_2s_1D_2s_1s_2+s_1D_2s_1s_2D_1D_2+s_1D_2s_1s_2D_1s_2\notag\\
+s_1&s_2D_1D_2D_1s_2+s_1s_2D_1s_2D_1D_2+s_1s_2D_1s_2D_1s_2\notag\\
+&D_1s_2D_1s_2s_1D_2+D_1s_2s_1D_2s_1D_2=\notag\\
D_2s_1D_2&s_1s_2D_1+D_2s_1s_2D_1s_2D_1+D_2D_1s_2D_1s_2s_1+D_2D_1s_2s_1D_2s_1\notag\\
+s_2D_1&D_2D_1s_2s_1+s_2D_1D_2s_1D_2s_1+s_2D_1s_2D_1D_2s_1+s_2D_1s_2D_1s_2s_1\notag\\
+s_2&D_1s_2s_1D_2D_1+s_2D_1s_2s_1D_2s_1+s_2s_1D_2D_1D_2s_1\notag\\
+&s_2s_1D_2s_1D_2D_1+s_2s_1D_2s_1D_2s_1.\label{G2-equality21}
\end{align}

By \eqref{G2-equality2}, we see that, in order to show \eqref{G2-equality21}, it suffices to prove that
\begin{align}
D_1D_2&s_1s_2D_1s_2+s_1D_2D_1s_2D_1s_2+s_1D_2s_1s_2D_1s_2+s_1s_2D_1D_2D_1s_2\notag\\
&\qquad +s_1s_2D_1s_2D_1D_2+s_1s_2D_1s_2D_1s_2+D_1s_2D_1s_2s_1D_2=\notag\\
D_2s_1&s_2D_1s_2D_1+D_2D_1s_2D_1s_2s_1+s_2D_1D_2D_1s_2s_1+s_2D_1s_2D_1D_2s_1\notag\\
&+s_2D_1s_2D_1s_2s_1+s_2D_1s_2s_1D_2D_1+s_2D_1s_2s_1D_2s_1,\label{G2-equality22}
\end{align}
which is exactly the equality \eqref{G2-equality19}.

\eqref{G-relation22} By \eqref{G-relation9} we get that
\begin{align}
D_2&D_1D_2D_1D_2s_1+D_2D_1D_2s_1D_2D_1+D_2D_1D_2s_1D_2s_1+D_2D_1D_2s_1s_2D_1D_2s_2\notag\\
&+D_2D_1D_2s_1s_2D_1+D_2s_1D_2D_1D_2D_1+D_2s_1D_2D_1D_2s_1+D_2s_1D_2D_1s_2D_1D_2s_2\notag\\
&+D_2s_1D_2D_1s_2D_1+D_2s_1D_2s_1D_2D_1+D_2s_1D_2s_1D_2s_1+D_2s_1D_2s_1s_2D_1D_2s_2\notag\\
&+D_2s_1D_2s_1s_2D_1+D_2s_1s_2D_1D_2D_1D_2s_2+D_2s_1s_2D_1D_2D_1\notag\\
&+D_2s_1s_2D_1s_2D_1D_2s_2+D_2s_1s_2D_1s_2D_1=\notag\\
D_1&D_2D_1D_2s_1+D_1D_2s_1D_2D_1+D_1D_2s_1D_2s_1+D_1D_2s_1s_2D_1D_2s_2\notag\\
&+D_1D_2s_1s_2D_1+s_1D_2D_1D_2D_1+s_1D_2D_1D_2s_1+s_1D_2D_1s_2D_1D_2s_2\notag\\
&+s_1D_2D_1s_2D_1+s_1D_2s_1D_2D_1+s_1D_2s_1D_2s_1+s_1D_2s_1s_2D_1D_2s_2\notag\\
&+s_1D_2s_1s_2D_1+s_1s_2D_1D_2D_1D_2s_2+s_1s_2D_1D_2D_1\notag\\
&+s_1s_2D_1s_2D_1D_2s_2+s_1s_2D_1s_2D_1.\label{G2-equality23}
\end{align}

Similarly, by \eqref{G-relation20} we get that
\begin{align}
s_2&D_2D_1D_2D_1s_2s_1D_2+s_2D_2D_1s_2D_1D_2s_1D_2+s_2D_2D_1s_2D_1s_2s_1D_2+s_2D_2D_1s_2s_1D_2D_1D_2\notag\\
&+s_2D_2D_1s_2s_1D_2s_1D_2+D_1D_2D_1D_2s_1D_2+D_1D_2D_1s_2s_1D_2+D_1D_2s_1D_2D_1D_2\notag\\
&+D_1D_2s_1D_2s_1D_2+D_1s_2D_1D_2s_1D_2+D_1s_2D_1s_2s_1D_2+D_1s_2s_1D_2D_1D_2\notag\\
&+D_1s_2s_1D_2s_1D_2+s_1D_2D_1D_2D_1D_2+s_1D_2D_1D_2s_1D_2\notag\\
&+s_1D_2s_1D_2D_1D_2+s_1D_2s_1D_2s_1D_2=\notag\\
s_2&D_2D_1D_2D_1s_2s_1+s_2D_2D_1s_2D_1D_2s_1+s_2D_2D_1s_2D_1s_2s_1+s_2D_2D_1s_2s_1D_2D_1\notag\\
&+s_2D_2D_1s_2s_1D_2s_1+D_1D_2D_1D_2s_1+D_1D_2D_1s_2s_1+D_1D_2s_1D_2D_1\notag\\
&+D_1D_2s_1D_2s_1+D_1s_2D_1D_2s_1+D_1s_2D_1s_2s_1+D_1s_2s_1D_2D_1\notag\\
&+D_1s_2s_1D_2s_1+s_1D_2D_1D_2D_1+s_1D_2D_1D_2s_1\notag\\
&+s_1D_2s_1D_2D_1+s_1D_2s_1D_2s_1.\label{G2-equality24}
\end{align}

By \eqref{G-relation10} and \eqref{G-relation19}, we see that, in order to prove \eqref{G-relation22}, it suffices to show that
\begin{align}
&D_2D_1D_2s_1s_2D_1D_2s_2+D_2D_1D_2s_1s_2D_1+D_2s_1D_2D_1s_2D_1D_2s_2+D_2s_1D_2D_1s_2D_1\notag\\
&+D_2s_1D_2s_1s_2D_1D_2s_2+D_2s_1D_2s_1s_2D_1+D_2s_1s_2D_1D_2D_1D_2s_2+D_2s_1s_2D_1D_2D_1\notag\\
&+D_2s_1s_2D_1s_2D_1D_2s_2+D_2s_1s_2D_1s_2D_1\notag\\
&+s_2D_2D_1s_2s_1D_2D_1+D_1s_2s_1D_2D_1+s_2D_2D_1s_2D_1D_2s_1+D_1s_2D_1D_2s_1\notag\\
&+s_2D_2D_1s_2s_1D_2s_1+D_1s_2s_1D_2s_1+s_2D_2D_1D_2D_1s_2s_1+D_1D_2D_1s_2s_1\notag\\
&+s_2D_2D_1s_2D_1s_2s_1+D_1s_2D_1s_2s_1=\notag\\
&D_1D_2s_1s_2D_1D_2s_2+D_1D_2s_1s_2D_1+s_1D_2D_1s_2D_1D_2s_2+s_1D_2D_1s_2D_1\notag\\
&+s_1D_2s_1s_2D_1D_2s_2+s_1D_2s_1s_2D_1+s_1s_2D_1D_2D_1D_2s_2+s_1s_2D_1D_2D_1\notag\\
&+s_1s_2D_1s_2D_1D_2s_2+s_1s_2D_1s_2D_1\notag\\
&+s_2D_2D_1s_2s_1D_2D_1D_2+D_1s_2s_1D_2D_1D_2+s_2D_2D_1s_2D_1D_2s_1D_2+D_1s_2D_1D_2s_1D_2\notag\\
&+s_2D_2D_1s_2s_1D_2s_1D_2+D_1s_2s_1D_2s_1D_2+s_2D_2D_1D_2D_1s_2s_1D_2+D_1D_2D_1s_2s_1D_2\notag\\
&+s_2D_2D_1s_2D_1s_2s_1D_2+D_1s_2D_1s_2s_1D_2,\label{G2-equality25}
\end{align}
which is equivalent to
\begin{align}
&D_2D_1D_2s_1s_2D_1(-s_2D_2+s_2)+D_2s_1D_2D_1s_2D_1(-s_2D_2+s_2)\notag\\
&+D_2s_1D_2s_1s_2D_1(-s_2D_2+s_2)+D_2s_1s_2D_1D_2D_1(-s_2D_2+s_2)\notag\\
&+D_2s_1s_2D_1s_2D_1(-s_2D_2+s_2)\notag\\
&+(-D_2s_2+s_2)D_1s_2s_1D_2D_1+(-D_2s_2+s_2)D_1s_2D_1D_2s_1\notag\\
&+(-D_2s_2+s_2)D_1s_2s_1D_2s_1+(-D_2s_2+s_2)D_1D_2D_1s_2s_1\notag\\
&+(-D_2s_2+s_2)D_1s_2D_1s_2s_1=\notag\\
&D_1D_2s_1s_2D_1(-s_2D_2+s_2)+s_1D_2D_1s_2D_1(-s_2D_2+s_2)\notag\\
&+s_1D_2s_1s_2D_1(-s_2D_2+s_2)+s_1s_2D_1D_2D_1(-s_2D_2+s_2)\notag\\
&+s_1s_2D_1s_2D_1(-s_2D_2+s_2)\notag\\
&+(-D_2s_2+s_2)D_1s_2s_1D_2D_1D_2+(-D_2s_2+s_2)D_1s_2D_1D_2s_1D_2\notag\\
&+(-D_2s_2+s_2)D_1s_2s_1D_2s_1D_2+(-D_2s_2+s_2)D_1D_2D_1s_2s_1D_2\notag\\
&+(-D_2s_2+s_2)D_1s_2D_1s_2s_1D_2,\label{G2-equality26}
\end{align}

By \eqref{G2-equality19} we have
\begin{align}
&s_2D_1s_2s_1D_2D_1+s_2D_1s_2D_1D_2s_1+s_2D_1D_2D_1s_2s_1\notag\\
&\qquad+D_2s_1s_2D_1s_2D_1+D_2D_1s_2D_1s_2s_1+s_2D_1s_2D_1s_2s_1=\notag\\
&s_1s_2D_1s_2D_1D_2+s_1s_2D_1D_2D_1s_2+s_1D_2D_1s_2D_1s_2\notag\\
&\qquad +D_1s_2D_1s_2s_1D_2+D_1D_2s_1s_2D_1s_2+s_1s_2D_1s_2D_1s_2.\label{G2-equality27}
\end{align}

By \eqref{G2-equality27}, we see that \eqref{G2-equality26} is equivalent to
\begin{align}
&-D_2\big(s_2D_1s_2s_1D_2D_1+s_2D_1s_2D_1D_2s_1+s_2D_1D_2D_1s_2s_1+D_2s_1s_2D_1s_2D_1\notag\\
&+D_2D_1s_2D_1s_2s_1+s_2D_1s_2D_1s_2s_1-s_1s_2D_1s_2D_1D_2-D_1s_2D_1s_2s_1D_2\big)D_2\notag\\
&+D_2\big(s_2D_1s_2s_1D_2D_1+s_2D_1s_2D_1D_2s_1+s_2D_1D_2D_1s_2s_1\notag\\
&+D_2s_1s_2D_1s_2D_1+D_2D_1s_2D_1s_2s_1+s_2D_1s_2D_1s_2s_1\notag\\
&-s_1s_2D_1s_2D_1D_2-D_1s_2D_1s_2s_1D_2\big)\notag\\
&+D_2s_1D_2s_1s_2D_1(-s_2D_2+s_2)+D_2\big(-s_2D_1s_2s_1D_2D_1-s_2D_1s_2D_1D_2s_1\notag\\
&-s_2D_1s_2s_1D_2s_1-s_2D_1D_2D_1s_2s_1-s_2D_1s_2D_1s_2s_1\big)\notag\\
&+s_2D_1s_2s_1D_2D_1+s_2D_1s_2D_1D_2s_1+s_2D_1s_2s_1D_2s_1\notag\\
&+s_2D_1D_2D_1s_2s_1+s_2D_1s_2D_1s_2s_1=\notag\\
&-D_2\big(s_2D_1s_2s_1D_2D_1+s_2D_1s_2D_1D_2s_1+s_2D_1s_2s_1D_2s_1+s_2D_1D_2D_1s_2s_1\notag\\
&+s_2D_1s_2D_1s_2s_1\big)D_2+\big(s_2D_1s_2s_1D_2D_1+s_2D_1s_2D_1D_2s_1+s_2D_1s_2s_1D_2s_1\notag\\
&+s_2D_1D_2D_1s_2s_1+s_2D_1s_2D_1s_2s_1-D_1D_2s_1s_2D_1s_2-s_1D_2D_1s_2D_1s_2\notag\\
&-s_1D_2s_1s_2D_1s_2-s_1s_2D_1D_2D_1s_2-s_1s_2D_1s_2D_1s_2\big)D_2\notag\\
&+D_1D_2s_1s_2D_1s_2+s_1D_2D_1s_2D_1s_2+s_1D_2s_1s_2D_1s_2\notag\\
&+s_1s_2D_1D_2D_1s_2+s_1s_2D_1s_2D_1s_2.\label{G2-equality28}
\end{align}

By noting that $s_2D_1s_2s_1D_2s_1=s_1D_2s_1s_2D_1s_2,$ we see that \eqref{G2-equality28} is equivalent to
\begin{align}
&-D_2\big(D_2s_1s_2D_1s_2D_1+D_2D_1s_2D_1s_2s_1-s_1s_2D_1s_2D_1D_2-D_1s_2D_1s_2s_1D_2\big)D_2\notag\\
&+D_2s_1D_2s_1s_2D_1(-s_2D_2+s_2)+D_2\big(D_2s_1s_2D_1s_2D_1+D_2D_1s_2D_1s_2s_1\notag\\
&-s_1s_2D_1s_2D_1D_2-D_1s_2D_1s_2s_1D_2-s_2D_1s_2s_1D_2s_1\big)\notag\\
&+s_1s_2D_1s_2D_1D_2+D_1s_2D_1s_2s_1D_2-D_2s_1s_2D_1s_2D_1-D_2D_1s_2D_1s_2s_1=\notag\\
&-D_2s_2D_1s_2s_1D_2s_1D_2+s_1s_2D_1s_2D_1D_2+D_1s_2D_1s_2s_1D_2\notag\\
&-D_2s_1s_2D_1s_2D_1D_2-D_2D_1s_2D_1s_2s_1D_2,\label{G2-equality29}
\end{align}
which is equivalent to
\begin{align}
-D_2s_1D_2s_1s_2D_1s_2D_2+D_2s_1D_2s_1s_2D_1s_2-D_2s_2D_1s_2s_1D_2s_1=-D_2s_2D_1s_2s_1D_2s_1D_2.\label{G2-equality30}
\end{align}
By $s_2D_1s_2s_1D_2s_1=s_1D_2s_1s_2D_1s_2,$ we see that \eqref{G2-equality30} is true.

\eqref{G-relation23} It can be proved similarly.
\end{proof}

Recall that $\widetilde{D}_1 :=D_1,$ $\widetilde{D}_2 :=s_1D_2s_1,$ $\widetilde{D}_3 :=s_1s_2D_1s_2s_1,$ $\widetilde{D}_4 :=s_2s_1D_2s_1s_2,$ $\widetilde{D}_5 :=s_2D_1s_2,$ and $\widetilde{D}_6 :=D_2.$

We have that \eqref{G-relation5} corresponds to the element
\begin{align*}
\widetilde{D}_6\widetilde{D}_1-\widetilde{D}_1\widetilde{D}_2-\widetilde{D}_2\widetilde{D}_3-\widetilde{D}_3\widetilde{D}_4-
\widetilde{D}_4\widetilde{D}_5-\widetilde{D}_5\widetilde{D}_6+\widetilde{D}_2+\widetilde{D}_3+\widetilde{D}_4+\widetilde{D}_5
\end{align*}
in $K_{12}(W_2);$

\eqref{G-relation6} corresponds to the element
$\widetilde{D}_3\widetilde{D}_6-\widetilde{D}_6\widetilde{D}_3$ in $K_{12}(W_2);$

\eqref{G-relation7} corresponds to the element
$\widetilde{D}_1\widetilde{D}_3+\widetilde{D}_3\widetilde{D}_5-\widetilde{D}_5\widetilde{D}_1-\widetilde{D}_3$ in $K_{12}(W_2);$

\eqref{G-relation8} corresponds to the element
$\widetilde{D}_4\widetilde{D}_2+\widetilde{D}_6\widetilde{D}_4-\widetilde{D}_2\widetilde{D}_6-\widetilde{D}_4$ in $K_{12}(W_2);$

\eqref{G-relation9} corresponds to the element
\begin{align}\label{G2-K12W2-element1}
&\widetilde{D}_6\widetilde{D}_1\widetilde{D}_6\widetilde{D}_3-\widetilde{D}_6\widetilde{D}_2\widetilde{D}_1\widetilde{D}_3+
\widetilde{D}_6\widetilde{D}_2\widetilde{D}_3-\widetilde{D}_6\widetilde{D}_3\widetilde{D}_2\widetilde{D}_3+\widetilde{D}_6\widetilde{D}_3\notag\\
&-\widetilde{D}_1\widetilde{D}_6\widetilde{D}_1\widetilde{D}_6+\widetilde{D}_1\widetilde{D}_6\widetilde{D}_2\widetilde{D}_1-
\widetilde{D}_1\widetilde{D}_6\widetilde{D}_2+\widetilde{D}_1\widetilde{D}_6\widetilde{D}_3\widetilde{D}_2-\widetilde{D}_1\widetilde{D}_6\widetilde{D}_3\notag\\
&-\widetilde{D}_2\widetilde{D}_1\widetilde{D}_2\widetilde{D}_1+\widetilde{D}_2\widetilde{D}_1+
\widetilde{D}_2\widetilde{D}_1\widetilde{D}_2-\widetilde{D}_2-\widetilde{D}_2\widetilde{D}_1\widetilde{D}_3\widetilde{D}_2\notag\\
&+\widetilde{D}_2\widetilde{D}_3\widetilde{D}_2+\widetilde{D}_2\widetilde{D}_1\widetilde{D}_3-\widetilde{D}_2\widetilde{D}_3
-\widetilde{D}_3\widetilde{D}_2\widetilde{D}_3\widetilde{D}_2+\widetilde{D}_3\widetilde{D}_2+\widetilde{D}_3\widetilde{D}_2\widetilde{D}_3-\widetilde{D}_3
\end{align}
in $K_{12}(W_2);$

\eqref{G-relation10} corresponds to the element
\begin{align}\label{G2-K12W2-element2}
&\widetilde{D}_2\widetilde{D}_6\widetilde{D}_1\widetilde{D}_6-\widetilde{D}_2\widetilde{D}_1\widetilde{D}_2\widetilde{D}_6+
\widetilde{D}_2\widetilde{D}_6+\widetilde{D}_1\widetilde{D}_6\widetilde{D}_2\widetilde{D}_6\notag\\
&\qquad-\widetilde{D}_6\widetilde{D}_2\widetilde{D}_6\widetilde{D}_1+\widetilde{D}_6\widetilde{D}_2\widetilde{D}_1\widetilde{D}_2-
\widetilde{D}_6\widetilde{D}_2-\widetilde{D}_6\widetilde{D}_1\widetilde{D}_6\widetilde{D}_2
\end{align}
in $K_{12}(W_2);$

\eqref{G-relation11} corresponds to the element
\begin{align}\label{G2-K12W2-element3}
&\widetilde{D}_5\widetilde{D}_1\widetilde{D}_6\widetilde{D}_1-\widetilde{D}_5\widetilde{D}_6\widetilde{D}_5\widetilde{D}_1+
\widetilde{D}_5\widetilde{D}_1+\widetilde{D}_6\widetilde{D}_1\widetilde{D}_5\widetilde{D}_1\notag\\
&\qquad-\widetilde{D}_1\widetilde{D}_5\widetilde{D}_1\widetilde{D}_6+\widetilde{D}_1\widetilde{D}_5\widetilde{D}_6\widetilde{D}_5-
\widetilde{D}_1\widetilde{D}_5-\widetilde{D}_1\widetilde{D}_6\widetilde{D}_1\widetilde{D}_5
\end{align}
in $K_{12}(W_2);$

\eqref{G-relation12} corresponds to the element
\begin{align*}
\widetilde{D}_1\widetilde{D}_2\widetilde{D}_3\widetilde{D}_4\widetilde{D}_5\widetilde{D}_6-
\widetilde{D}_6\widetilde{D}_5\widetilde{D}_4\widetilde{D}_3\widetilde{D}_2\widetilde{D}_1
\end{align*}
in $K_{12}(W_2).$

Set $\alpha_{1}^{\vee} :=\varepsilon_{1}^{\vee}-\varepsilon_{2}^{\vee},$ $\alpha_{2}^{\vee} :=2\varepsilon_{2}^{\vee}-\varepsilon_{1}^{\vee}-\varepsilon_{3}^{\vee}$ and also $\alpha_1 :=\varepsilon_1-\varepsilon_2,$ $\alpha_2 :=\frac{2}{3}\varepsilon_2-\frac{1}{3}\varepsilon_1-\frac{1}{3}\varepsilon_3,$ where $\varepsilon_1,$ $\varepsilon_2$ and $\varepsilon_3$ are a set of bases of the Euclidean space $\mathbb{R}^{3}$ and $\varepsilon_1^{\vee},$ $\varepsilon_2^{\vee}$ and $\varepsilon_3^{\vee}$ are the dual bases. We have
\begin{align*}
a_{11} :=\langle\alpha_{1}^{\vee},\alpha_{1}\rangle=2,\quad a_{22} :=\langle\alpha_{2}^{\vee},\alpha_{2}\rangle=2,\quad a_{12} :=\langle\alpha_{2}^{\vee},\alpha_{1}\rangle=-3,\quad a_{21} :=\langle\alpha_{1}^{\vee},\alpha_{2}\rangle=-1.
\end{align*}
Let $\mathcal{Q}_2$ be the field of fractions of the Laurent polynomial ring $\mathcal{L}_2=\mathbb{Z}[t_1^{\pm 1}, t_2^{\pm 1}].$ We define the action of $W_2$ on $\mathcal{Q}_2$ by
\begin{align*}
s_1(t_1)=t_1^{-1},\quad s_1(t_2)=t_1^{-a_{12}}t_2=t_1^{3}t_2,\quad s_2(t_1)=t_2^{-a_{21}}t_1=t_1t_2,\quad s_2(t_2)=t_2^{-1}.
\end{align*}

Recall from [BK, Proposition 7.42] that the assignments $D_{i} \mapsto \frac{1}{1-t_{i}}(1-s_{i})$ and $s_{i}\mapsto s_{i}$ $(1\leq i\leq2),$ define a homomorphism of algebras $\hat{p}_{W_2} :\hat{\bf{H}}(W_{2})\rightarrow \mathcal{Q}_2\rtimes \mathbb{Z}W_{2}.$ Then we have the following lemma.
\begin{lemma}\label{G2-lemma1}
We have

$(1)$ $\hat{p}_{W_2}(\widetilde{D}_6\widetilde{D}_1-\widetilde{D}_1\widetilde{D}_2-\widetilde{D}_2\widetilde{D}_3-\widetilde{D}_3\widetilde{D}_4-
\widetilde{D}_4\widetilde{D}_5-\widetilde{D}_5\widetilde{D}_6+\widetilde{D}_2+\widetilde{D}_3+\widetilde{D}_4+\widetilde{D}_5)=0.$

$(2)$ $\hat{p}_{W_2}(\widetilde{D}_3\widetilde{D}_6-\widetilde{D}_6\widetilde{D}_3)=0.$

$(3)$ $\hat{p}_{W_2}(\widetilde{D}_1\widetilde{D}_3+\widetilde{D}_3\widetilde{D}_5-\widetilde{D}_5\widetilde{D}_1-\widetilde{D}_3)=0.$

$(4)$ $\hat{p}_{W_2}(\widetilde{D}_4\widetilde{D}_2+\widetilde{D}_6\widetilde{D}_4-\widetilde{D}_2\widetilde{D}_6-\widetilde{D}_4)=0.$

$(5)$
\begin{align*}
\hat{p}_{W_2}&(\widetilde{D}_6\widetilde{D}_1\widetilde{D}_6\widetilde{D}_3-\widetilde{D}_6\widetilde{D}_2\widetilde{D}_1\widetilde{D}_3+
\widetilde{D}_6\widetilde{D}_2\widetilde{D}_3-\widetilde{D}_6\widetilde{D}_3\widetilde{D}_2\widetilde{D}_3+\widetilde{D}_6\widetilde{D}_3\\
&-\widetilde{D}_1\widetilde{D}_6\widetilde{D}_1\widetilde{D}_6+\widetilde{D}_1\widetilde{D}_6\widetilde{D}_2\widetilde{D}_1-
\widetilde{D}_1\widetilde{D}_6\widetilde{D}_2+\widetilde{D}_1\widetilde{D}_6\widetilde{D}_3\widetilde{D}_2-\widetilde{D}_1\widetilde{D}_6\widetilde{D}_3\\&
-\widetilde{D}_2\widetilde{D}_1\widetilde{D}_2\widetilde{D}_1+\widetilde{D}_2\widetilde{D}_1+
\widetilde{D}_2\widetilde{D}_1\widetilde{D}_2-\widetilde{D}_2-\widetilde{D}_2\widetilde{D}_1\widetilde{D}_3\widetilde{D}_2\\
&+\widetilde{D}_2\widetilde{D}_3\widetilde{D}_2+\widetilde{D}_2\widetilde{D}_1\widetilde{D}_3-\widetilde{D}_2\widetilde{D}_3
-\widetilde{D}_3\widetilde{D}_2\widetilde{D}_3\widetilde{D}_2+\widetilde{D}_3\widetilde{D}_2+\widetilde{D}_3\widetilde{D}_2\widetilde{D}_3-\widetilde{D}_3)
=0.
\end{align*}

$(6)$
\begin{align*}
\hat{p}_{W_2}(&\widetilde{D}_2\widetilde{D}_6\widetilde{D}_1\widetilde{D}_6-\widetilde{D}_2\widetilde{D}_1\widetilde{D}_2\widetilde{D}_6+
\widetilde{D}_2\widetilde{D}_6+\widetilde{D}_1\widetilde{D}_6\widetilde{D}_2\widetilde{D}_6\\
&\qquad-\widetilde{D}_6\widetilde{D}_2\widetilde{D}_6\widetilde{D}_1+\widetilde{D}_6\widetilde{D}_2\widetilde{D}_1\widetilde{D}_2-
\widetilde{D}_6\widetilde{D}_2-\widetilde{D}_6\widetilde{D}_1\widetilde{D}_6\widetilde{D}_2)=0.
\end{align*}

$(7)$ \begin{align*}
\hat{p}_{W_2}(&\widetilde{D}_5\widetilde{D}_1\widetilde{D}_6\widetilde{D}_1-\widetilde{D}_5\widetilde{D}_6\widetilde{D}_5\widetilde{D}_1+
\widetilde{D}_5\widetilde{D}_1+\widetilde{D}_6\widetilde{D}_1\widetilde{D}_5\widetilde{D}_1\\
&\qquad-\widetilde{D}_1\widetilde{D}_5\widetilde{D}_1\widetilde{D}_6+\widetilde{D}_1\widetilde{D}_5\widetilde{D}_6\widetilde{D}_5-
\widetilde{D}_1\widetilde{D}_5-\widetilde{D}_1\widetilde{D}_6\widetilde{D}_1\widetilde{D}_5)=0.
\end{align*}

$(8)$ $\hat{p}_{W_2}(\widetilde{D}_1\widetilde{D}_2\widetilde{D}_3\widetilde{D}_4\widetilde{D}_5\widetilde{D}_6-
\widetilde{D}_6\widetilde{D}_5\widetilde{D}_4\widetilde{D}_3\widetilde{D}_2\widetilde{D}_1)=0.$
\end{lemma}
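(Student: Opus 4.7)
The plan is to mirror the approach of Lemma~\ref{b2-lemma1}, working inside the skew group algebra $\mathcal{Q}_2\rtimes \mathbb{Z}W_2$. First I would compute, for each reflection $\widetilde{D}_k$, an explicit formula of the form $\hat{p}_{W_2}(\widetilde{D}_k)=\tau_k(1-\sigma_k)$, where $\sigma_k\in W_2$ is the corresponding reflection and $\tau_k\in\mathcal{Q}_2$ is a single rational function. Using the conjugation rule $w\cdot\tau\cdot w^{-1}$ in $\mathcal{Q}_2\rtimes\mathbb{Z}W_2$ and the given $W_2$-action on $t_1,t_2$, we get
\begin{align*}
\tau_1&=\tfrac{1}{1-t_1}, & \tau_2&=\tfrac{1}{1-t_1^3t_2}, & \tau_3&=\tfrac{1}{1-t_1^2t_2},\\
\tau_4&=\tfrac{1}{1-t_1^3t_2^2}, & \tau_5&=\tfrac{1}{1-t_1t_2}, & \tau_6&=\tfrac{1}{1-t_2},
\end{align*}
with $\sigma_1=s_1,\sigma_2=s_1s_2s_1,\sigma_3=s_1s_2s_1s_2s_1,\sigma_4=s_2s_1s_2s_1s_2,\sigma_5=s_2s_1s_2,\sigma_6=s_2$.

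Next, for each of the eight statements, I would expand each product $\hat{p}_{W_2}(\widetilde{D}_{k_1}\widetilde{D}_{k_2}\cdots)$ using the rule $\sigma\cdot\tau=\sigma(\tau)\cdot\sigma$, obtaining an $\mathcal{L}_2$-linear combination of the $12$ elements of $W_2$. Since $W_2$ is the dihedral group of order $12$, the identity $\hat{p}_{W_2}(\cdot)=0$ reduces to $12$ scalar equalities in $\mathcal{Q}_2$, one for each group element. Parts (1)-(4) are the analogues of Lemma~\ref{b2-lemma1}(1) for the lower-degree generators of $K_{12}(W_2)$ coming from (\ref{G-relation5})-(\ref{G-relation8}); the degree-two quadratic identities (2), (3), (4) (which are essentially $[\widetilde{D}_i,\widetilde{D}_j]$-type relations) follow from the single rational identities $\tau_3\tau_6=\tau_6\tau_3$-type calculations together with cancellation of mixed terms, while (1) reduces exactly as in the $B_2$ case to the key identity $\sum_{k}\tau_k-\sum\tau_k\tau_{k+1}=0$ for consecutive indices.

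For parts (5), (6), (7) — the cubic-in-degree relations corresponding to (\ref{G-relation9})-(\ref{G-relation11}) — I would first use (1)-(4) to simplify: by part (1), modulo shorter terms one can reduce the long sum in (5) to an identity involving only products $\widetilde{D}_i\widetilde{D}_j\widetilde{D}_k\widetilde{D}_\ell$ with adjacent indices, and similarly (6) and (7) simplify to verifying $\hat{p}_{W_2}$ applied to a Yang-Baxter-type expression in three generators. Concretely, for (7) I would set $\tilde{\tau}_a=\hat{p}(\widetilde{D}_a)$ and check by direct substitution that
$\tilde{\tau}_5\tilde{\tau}_1\tilde{\tau}_6\tilde{\tau}_1-\tilde{\tau}_5\tilde{\tau}_6\tilde{\tau}_5\tilde{\tau}_1+\tilde{\tau}_5\tilde{\tau}_1+\tilde{\tau}_6\tilde{\tau}_1\tilde{\tau}_5\tilde{\tau}_1
=\tilde{\tau}_1\tilde{\tau}_5\tilde{\tau}_1\tilde{\tau}_6-\tilde{\tau}_1\tilde{\tau}_5\tilde{\tau}_6\tilde{\tau}_5+\tilde{\tau}_1\tilde{\tau}_5+\tilde{\tau}_1\tilde{\tau}_6\tilde{\tau}_1\tilde{\tau}_5,$
coefficient by coefficient over the $12$ group elements; the analogous computation handles (6) by the symmetric argument exchanging $s_1\leftrightarrow s_2$.

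Finally, for part (8) I would use the observation (already exploited for $B_2$) that under (\ref{G-relation5}) one has $\widetilde{D}_1\widetilde{D}_2\widetilde{D}_3\widetilde{D}_4\widetilde{D}_5\widetilde{D}_6=D_1D_2D_1D_2D_1D_2$ and $\widetilde{D}_6\widetilde{D}_5\widetilde{D}_4\widetilde{D}_3\widetilde{D}_2\widetilde{D}_1=D_2D_1D_2D_1D_2D_1$, so combined with (1) it suffices to verify
\[\hat{p}_{W_2}\bigl((\widetilde{D}_1\widetilde{D}_6)^3-(\widetilde{D}_6\widetilde{D}_1)^3\bigr)=0,\]
which expands to checking that $12$ rational-function coefficients cancel. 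The main obstacle is this last step together with part (5): the expansion of $(\widetilde{D}_1\widetilde{D}_6)^3$ produces $64$ terms and the coefficient of each of the $12$ group elements is a rational function in $t_1,t_2$ involving up to six of the $\tau_k$. The cancellations rest on a handful of base identities of the type $\tau_k+\tau_{k+1}-\tau_k\tau_{k+1}=\tau_k\tau_{k+1}\cdot(\text{linear})$, valid because each $\tau_k$ has the shape $1/(1-t^\alpha)$ for a positive root $\alpha$; organizing and verifying all of these by hand is bookkeeping-heavy but mechanical, and this is the step I expect to be the bulk of the work.
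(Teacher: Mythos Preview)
Your proposal is correct and follows essentially the same approach as the paper: compute $\hat{p}_{W_2}(\widetilde{D}_k)=\tau_k(1-\sigma_k)$ explicitly (the paper obtains the identical six rational functions, written as $\tau_1,\tau_{12},\tau_{121},\tau_{212},\tau_{21},\tau_2$), expand each product over the basis of $W_2$, and verify that the twelve coefficient functions vanish. For parts (5)--(8) the paper simply writes ``They can be proved similarly by a direct calculation. We skip the details,'' so your plan of using (1)--(4) to simplify the higher-degree cases and your reduction of (8) to $(\widetilde{D}_1\widetilde{D}_6)^3-(\widetilde{D}_6\widetilde{D}_1)^3$ (mirroring the $B_2$ argument) is in fact more detailed than, but entirely consistent with, what the paper does.
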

\begin{proof}
$(1)$ Set $\tau_1 :=\frac{1}{1-t_1}$ and $\tau_2 :=\frac{1}{1-t_2}$ such that $\hat{p}_{W_2}(\widetilde{D}_1)=\tau_1(1-s_1)$ and $\hat{p}_{W_2}(\widetilde{D}_6)=\tau_2(1-s_2).$ Therefore, we have
\begin{align*}
\hat{p}_{W_2}(\widetilde{D}_2)=s_1\tau_2(1-s_2)s_1=\tau_{12}(1-s_{12}),
\end{align*}
\begin{align*}
\hat{p}_{W_2}(\widetilde{D}_3)=s_1s_2\tau_1(1-s_1)s_2s_1=\tau_{121}(1-s_{121}),
\end{align*}
\begin{align*}
\hat{p}_{W_2}(\widetilde{D}_4)=s_2s_1\tau_2(1-s_2)s_1s_2=\tau_{212}(1-s_{212}),
\end{align*}
\begin{align*}
\hat{p}_{W_2}(\widetilde{D}_5)=s_2\tau_1(1-s_1)s_2=\tau_{21}(1-s_{21}),
\end{align*}
where $\tau_{12}=s_1\tau_{2}s_1=\frac{1}{1-t_1^{3}t_2},$ $\tau_{21}=s_2\tau_{1}s_2=\frac{1}{1-t_1t_2},$ $\tau_{121}=s_1s_2\tau_{1}s_2s_1=\frac{1}{1-t_1^{2}t_2},$ $\tau_{212}=s_2s_1\tau_{2}s_1s_2=\frac{1}{1-t_1^{3}t_2^{2}},$ $s_{12}=s_1s_2s_1,$ $s_{121}=s_1s_2s_1s_2s_1,$ $s_{212}=s_2s_1s_2s_1s_2,$ and $s_{21}=s_2s_1s_2.$

Thus, we get that
\begin{align*}
\hat{p}_{W_2}(\widetilde{D}_6\widetilde{D}_1)=\tau_2(1-s_2)\tau_1(1-s_1)=\tau_2\tau_1(1-s_1)-\tau_2\tau_{21}s_2(1-s_1);
\end{align*}
\begin{align*}
\hat{p}_{W_2}(\widetilde{D}_1\widetilde{D}_2)=\tau_1(1-s_1)\tau_{12}(1-s_{12})=\tau_1\tau_{12}(1-s_{12})-\tau_1\tau_{2}s_1(1-s_{12});
\end{align*}
\begin{align*}
\hat{p}_{W_2}(\widetilde{D}_2\widetilde{D}_3)=\tau_{12}(1-s_{12})\tau_{121}(1-s_{121})=\tau_{12}\tau_{121}(1-s_{121})-\tau_{12}(1-\tau_{1})s_{12}(1-s_{121});
\end{align*}
\begin{align*}
\hat{p}_{W_2}(\widetilde{D}_3\widetilde{D}_4)=\tau_{121}(1-s_{121})\tau_{212}(1-s_{212})=\tau_{121}\tau_{212}(1-s_{212})-\tau_{121}(1-\tau_{12})s_{121}(1-s_{212});
\end{align*}
\begin{align*}
\hat{p}_{W_2}(\widetilde{D}_4\widetilde{D}_5)=\tau_{212}(1-s_{212})\tau_{21}(1-s_{21})=\tau_{212}\tau_{21}(1-s_{21})-\tau_{212}(1-\tau_{121})s_{212}(1-s_{21});
\end{align*}
\begin{align*}
\hat{p}_{W_2}(\widetilde{D}_5\widetilde{D}_6)=\tau_{21}(1-s_{21})\tau_{2}(1-s_{2})=\tau_{21}\tau_{2}(1-s_{2})-\tau_{21}(1-\tau_{212})s_{21}(1-s_{2});
\end{align*}
Therefore, we have
\begin{align}
&\hat{p}_{W_2}(\widetilde{D}_6\widetilde{D}_1-\widetilde{D}_1\widetilde{D}_2-\widetilde{D}_2\widetilde{D}_3-\widetilde{D}_3\widetilde{D}_4-
\widetilde{D}_4\widetilde{D}_5-\widetilde{D}_5\widetilde{D}_6+\widetilde{D}_2+\widetilde{D}_3+\widetilde{D}_4+\widetilde{D}_5)
\notag\\=&\tau_2\tau_1(1-s_1)-\tau_2\tau_{21}s_2(1-s_1)-\tau_1\tau_{12}(1-s_{12})+\tau_1\tau_{2}s_1(1-s_{12})\notag\\
&-\tau_{12}\tau_{121}(1-s_{121})+\tau_{12}(1-\tau_{1})s_{12}(1-s_{121})-\tau_{121}\tau_{212}(1-s_{212})\notag\\
&+\tau_{121}(1-\tau_{12})s_{121}(1-s_{212})-\tau_{212}\tau_{21}(1-s_{21})+\tau_{212}(1-\tau_{121})s_{212}(1-s_{21})\notag\\
&-\tau_{21}\tau_{2}(1-s_{2})+\tau_{21}(1-\tau_{212})s_{21}(1-s_{2})+\tau_{12}(1-s_{12})\notag\\
&+\tau_{121}(1-s_{121})+\tau_{212}(1-s_{212})+\tau_{21}(1-s_{21}).
\label{g2-lemma-equatlity1}
\end{align}
It is easy to check that $\tau_2\tau_1-\tau_1\tau_{12}-\tau_{12}\tau_{121}-\tau_{121}\tau_{212}-\tau_{212}\tau_{21}-\tau_{21}\tau_{2}+\tau_{12}+\tau_{121}+\tau_{212}+\tau_{21}=0.$ By this and \eqref{g2-lemma-equatlity1}, we can see that $(1)$ holds.

$(2)$ We have
\begin{align*}
\hat{p}_{W_2}(\widetilde{D}_3\widetilde{D}_6)=\tau_{121}(1-s_{121})\tau_{2}(1-s_{2})=\tau_{121}\tau_{2}(1-s_{2})-\tau_{121}\tau_{2}s_{121}(1-s_{2});
\end{align*}
and
\begin{align*}
\hat{p}_{W_2}(\widetilde{D}_6\widetilde{D}_3)=\tau_{2}(1-s_{2})\tau_{121}(1-s_{121})=\tau_{2}\tau_{121}(1-s_{121})-\tau_{2}\tau_{121}s_{2}(1-s_{121}).
\end{align*}
By this, it is easy to check that $(2)$ holds.

$(3)$ We have
\begin{align*}
\hat{p}_{W_2}(\widetilde{D}_3\widetilde{D}_5)=\tau_{121}(1-s_{121})\tau_{21}(1-s_{21})=\tau_{121}\tau_{21}(1-s_{21})-\tau_{121}(1-\tau_{1})s_{121}(1-s_{21});
\end{align*}
\begin{align*}
\hat{p}_{W_2}(\widetilde{D}_1\widetilde{D}_3)=\tau_{1}(1-s_{1})\tau_{121}(1-s_{121})=\tau_{1}\tau_{121}(1-s_{121})-\tau_{1}\tau_{21}s_{1}(1-s_{121});
\end{align*}
\begin{align*}
\hat{p}_{W_2}(\widetilde{D}_5\widetilde{D}_1)=\tau_{21}(1-s_{21})\tau_{1}(1-s_{1})=\tau_{21}\tau_{1}(1-s_{1})-\tau_{21}\tau_{121}s_{21}(1-s_{1}).
\end{align*}
Then we have
\begin{align}
&\hat{p}_{W_2}(\widetilde{D}_1\widetilde{D}_3+\widetilde{D}_3\widetilde{D}_5-\widetilde{D}_5\widetilde{D}_1-\widetilde{D}_3)
\notag\\=&\tau_{1}\tau_{121}(1-s_{121})-\tau_{1}\tau_{21}s_{1}(1-s_{121})+\tau_{121}\tau_{21}(1-s_{21})-\tau_{121}(1-\tau_{1})s_{121}(1-s_{21})\notag\\
&-\tau_{21}\tau_{1}(1-s_{1})+\tau_{21}\tau_{121}s_{21}(1-s_{1})-\tau_{121}(1-s_{121}).
\label{g2-lemma-equatlity2}
\end{align}
By this it is easy to check that $(3)$ holds.

$(4)$ It can be proved similarly.

$(5)$-$(8)$ They can be proved similarly by a direct calculation. We skip the details.
\end{proof}

Lemma \ref{G2-lemma1} immediately yields the following corollary.
\begin{corollary}\label{G2-corollary-module}
The assignments $D_{i} \mapsto \frac{1}{1-t_{i}}(1-s_{i})$ and $s_{i}\mapsto s_{i}$ $(1\leq i\leq2),$ define a homomorphism of algebras $p_{W_2} :\mathbf{H}(W_2)\rightarrow \mathcal{Q}_2\rtimes \mathbb{Z}W_{2}.$
\end{corollary}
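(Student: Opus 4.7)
The plan is to deduce the corollary directly from Lemma \ref{G2-lemma1} together with the explicit presentation given in Theorem \ref{theoremg2}. Recall that the Hecke-Hopf algebra $\mathbf{H}(W_2)$ is by construction the quotient of $\hat{\mathbf{H}}(W_2)$ by the Hopf ideal $\underline{\mathbf{J}}(W_2)$ generated by $K_{12}(W_2)$, and by [BK, Proposition 7.42] the map $\hat{p}_{W_2}:\hat{\mathbf{H}}(W_2)\to \mathcal{Q}_2\rtimes\mathbb{Z}W_2$ sending $D_i\mapsto \frac{1}{1-t_i}(1-s_i)$ and $s_i\mapsto s_i$ is already an algebra homomorphism. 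Hence the only thing to check is that $\hat{p}_{W_2}$ annihilates a generating set of $\underline{\mathbf{J}}(W_2)$.

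First I would list the generators of $K_{12}(W_2)$ that I need to kill. The elements displayed just before Lemma \ref{G2-lemma1} (coming from relations \eqref{G-relation5}--\eqref{G-relation12}) are exactly the ones appearing in the ideal of relations that, together with the Coxeter and rank-one relations, cut out $\mathbf{H}(W_2)$ from $\hat{\mathbf{H}}(W_2)$ according to Theorem \ref{theoremg2}. Lemma \ref{G2-lemma1}, parts (1)--(8), is precisely the statement that $\hat{p}_{W_2}$ sends each of these generators to $0$ in $\mathcal{Q}_2\rtimes\mathbb{Z}W_2$. It follows that $\hat{p}_{W_2}(\underline{\mathbf{J}}(W_2))=0$, so $\hat{p}_{W_2}$ factors uniquely through $\mathbf{H}(W_2)=\hat{\mathbf{H}}(W_2)/\underline{\mathbf{J}}(W_2)$, producing the desired algebra homomorphism $p_{W_2}:\mathbf{H}(W_2)\to \mathcal{Q}_2\rtimes\mathbb{Z}W_2$.

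Concretely, the verification proceeds by assembling the universal property: given any algebra map $\varphi$ out of $\hat{\mathbf{H}}(W_2)$ whose kernel contains every element of $K_{12}(W_2)$, it descends to $\mathbf{H}(W_2)$. Since all rank-one relations, Coxeter relations, and linear braid relations already hold in $\hat{\mathbf{H}}(W_2)$ and are preserved by $\hat{p}_{W_2}$, we only have to check the additional relations \eqref{G-relation5}--\eqref{G-relation12}. These correspond, one by one, to the eight elements of $K_{12}(W_2)$ whose vanishing under $\hat{p}_{W_2}$ is established in Lemma \ref{G2-lemma1}(1)--(8). No further computation is required.

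The main (and essentially only) obstacle here is not the corollary itself but the bookkeeping behind Lemma \ref{G2-lemma1}: the verifications for parts (5)--(8) involve rather lengthy manipulations with the elements $\tau_i$, $\tau_{12}$, $\tau_{121}$, $\tau_{212}$, $\tau_{21}$ in $\mathcal{Q}_2$ and the action of $W_2$ on them. Once those identities are in hand, the deduction of the corollary is purely formal, amounting to invoking the universal property of the quotient $\hat{\mathbf{H}}(W_2)\twoheadrightarrow \mathbf{H}(W_2)$.
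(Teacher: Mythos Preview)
Your proposal is correct and follows exactly the paper's approach: the paper simply states that Lemma \ref{G2-lemma1} immediately yields the corollary, and you have spelled out why---namely, the homomorphism $\hat{p}_{W_2}$ from [BK, Proposition 7.42] annihilates the generators of $\underline{\mathbf{J}}(W_2)$ by Lemma \ref{G2-lemma1}(1)--(8) and hence factors through the quotient $\mathbf{H}(W_2)$.
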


We have a natural action of $\mathcal{Q}_2\rtimes \mathbb{Z}W_{2}$ on $\mathcal{Q}_2$ by $(tw)(t')=t\cdot w(t')$ for $t, t'\in \mathcal{Q}_2$ and $w\in W_{2}.$ This, composing with $\hat{p}_{W_2}$, gives an action of $\hat{\bf{H}}(W_{2})$ on $\mathcal{Q}_2$ such that $\mathcal{L}_2$ is invariant and $\mathcal{Q}_2$ and $\mathcal{L}_2$ are both module algebras over $\hat{\bf{H}}(W_{2}).$ By Corollary \ref{G2-corollary-module} we get that $p_{W_2}$ defines a structure of a module algebra of $\mathbf{H}(W_2)$ on $\mathcal{Q}_2$ such that $\mathcal{L}_2$ is a module subalgebra. Thus, we verify Conjecture \ref{introduction-conjecture1} for the Coxeter group $W_{2}.$

From the presentation in Theorem \ref{theoremg2}, we can see that the elements \eqref{G2-K12W2-element1}, \eqref{G2-K12W2-element2} and \eqref{G2-K12W2-element3} do not belong to the elements $(a)$ and $(b)$ listed in Conjecture \ref{introduction-conjecture2}. Thus, Conjecture \ref{introduction-conjecture2} does not hold for the Coxeter group $W_{2}.$

\section{Type $I_{2}(5)$}
Let $W_3$ be the dihedral group of type $I_{2}(5)$ with generators $s_1,s_2$ and relations $s_1^{2}=s_2^{2}=1$ and $s_1s_2s_1s_2s_1=s_2s_1s_2s_1s_2.$ We then have the following theorem.

\begin{theorem}\label{theoremi25}
The Hecke-Hopf algebra $\mathbf{H}(W_3),$ associated to $W_3,$ has the following presentation$:$ it is generated by the elements $s_1,s_2,D_1,D_2$ with the following relations$:$
\begin{align}
s_1^{2}=&s_2^{2}=1,\quad s_1s_2s_1s_2s_1=s_2s_1s_2s_1s_2;\label{I25-relation1}\\[0.1em]
D_{i}^{2}=&D_{i};\label{I25-relation2}\\[0.1em]
s_{i}D_i&+D_{i}s_i=s_i-1;\label{I25-relation3}\\[0.1em]
D_2s_1&s_2s_1s_2=s_1s_2s_1s_2D_1;\label{I25-relation4}\\[0.1em]
D_2s_1&s_2s_1D_2=D_1D_2s_1s_2s_1+s_1D_2D_1s_2s_1+s_1D_2s_1s_2s_1\notag\\
&\qquad\qquad+s_1s_2D_1D_2s_1+s_1s_2D_1s_2s_1+s_1s_2s_1D_2D_1+s_1s_2s_1D_2s_1;\label{I25-relation5}\\[0.1em]
D_1s_2&s_1s_2D_1=D_2D_1s_2s_1s_2+s_2D_1D_2s_1s_2+s_2D_1s_2s_1s_2\notag\\
&\qquad\qquad+s_2s_1D_2D_1s_2+s_2s_1D_2s_1s_2+s_2s_1s_2D_1D_2+s_2s_1s_2D_1s_2;\label{I25-relation6}\\[0.1em]
D_1s_2&s_1D_2s_1+s_1D_2s_1s_2D_1=\notag\\
&\qquad\qquad s_2D_1s_2D_1s_2+D_2s_1D_2s_1s_2+s_2s_1D_2s_1D_2+s_2s_1D_2s_1s_2;\label{I25-relation7}\\[0.1em]
D_2D_1&D_2s_1D_2+D_2s_1D_2D_1D_2+D_2s_1D_2s_1D_2=\notag\\
&\qquad D_1D_2D_1D_2s_1+D_1D_2s_1D_2D_1+D_1D_2s_1D_2s_1+s_1D_2D_1D_2D_1\notag\\
&\qquad\qquad+s_1D_2D_1D_2s_1+s_1D_2s_1D_2D_1+s_1D_2s_1D_2s_1;\label{I25-relation8}\\[0.1em]
D_1D_2&D_1s_2D_1+D_1s_2D_1D_2D_1+D_1s_2D_1s_2D_1=\notag\\
&\qquad D_2D_1D_2D_1s_2+D_2D_1s_2D_1D_2+D_2D_1s_2D_1s_2+s_2D_1D_2D_1D_2\notag\\
&\qquad\qquad+s_2D_1D_2D_1s_2+s_2D_1s_2D_1D_2+s_2D_1s_2D_1s_2.\label{I25-relation9}
\end{align}
\end{theorem}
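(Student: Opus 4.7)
The plan is to mimic the proof of Theorem \ref{theoremb1}, adapted to the dihedral group $W_3 = I_2(5)$, where we now have five reflections in $\mathcal{S}$ rather than four. Explicitly, set $\widehat{D}_1 := D_{s_1} = D_1$, $\widehat{D}_2 := D_{s_1 s_2 s_1} = s_1 D_2 s_1$, $\widehat{D}_3 := D_{s_1 s_2 s_1 s_2 s_1} = s_1 s_2 D_1 s_2 s_1 = s_2 s_1 D_2 s_1 s_2$, $\widehat{D}_4 := D_{s_2 s_1 s_2} = s_2 D_1 s_2$, $\widehat{D}_5 := D_{s_2} = D_2$, together with corresponding reflections $\bar{s}_k$. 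Using [BK, Proposition 7.33(b)] we have
\[
K_{12}(W_3) \subseteq \{x \in \hat{\mathbf{D}}(W_3)_{\leq 5} \mid d_k(x) = 0 = \varepsilon(x),\ 1 \leq k \leq 5\},
\]
where $d_k(\widehat{D}_{k'}) = \delta_{k,k'}$ and $d_k$ is a twisted derivation with respect to $\bar{s}_k$ via the $\mathbb{Z}W$-module algebra structure \eqref{b2-module-algebra}. First I would write down explicitly the action of each $\bar{s}_k$ on $\widehat{D}_1, \ldots, \widehat{D}_5$ (there are no fixed points this time because $m_{ij}=5$ is odd, so each $\bar{s}_k$ acts nontrivially on all generators), and list a $\mathbb{Z}W$-basis of $\hat{\mathbf{D}}(W_3)_{\leq 5}$ consisting of reduced monomials in the $\widehat{D}_k$ of length at most $5$.

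Next, by writing a general element $x \in \hat{\mathbf{D}}(W_3)_{\leq 5}$ with unknown integer coefficients in this basis and imposing $\varepsilon(x) = 0$ together with $d_1(x) = \cdots = d_5(x) = 0$, one obtains a (large but linear) system of equations on the coefficients. Solving this system, as in the $B_2$ argument, expresses some coefficients in terms of free parameters; then each free parameter gives one generator of an ideal $J \supseteq K_{12}(W_3)$, and (after using the quadratic relations $\widehat{D}_k^2 = \widehat{D}_k$ to eliminate square monomials, which forces the analogues of the $h_{ij}$-coefficients to vanish) one reduces $J$ to be generated by a short list of elements. I would then check that, modulo the already-identified low-degree generators, the full $J$ collapses to just those listed in relations \eqref{I25-relation5}--\eqref{I25-relation9}, and that each of these elements lies in $K_{12}(W_3)$ by identifying them with the $Q_{12}^{(\cdot)}$ and $R_{12}^{(\cdot)}$ elements from [BK, Subsection 7.4], whose membership is guaranteed by [BK, Proposition 7.20].

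Finally, I would verify that the braid relation $D_1 D_2 D_1 D_2 D_1 = D_2 D_1 D_2 D_1 D_2$ (which should correspond to the top-degree element $\widehat{D}_1 \widehat{D}_2 \widehat{D}_3 \widehat{D}_4 \widehat{D}_5 - \widehat{D}_5 \widehat{D}_4 \widehat{D}_3 \widehat{D}_2 \widehat{D}_1$) is a consequence of \eqref{I25-relation5}--\eqref{I25-relation9}, analogously to the last paragraph of the proof of Theorem \ref{theoremb1} where one shows $\widehat{D}_1 \widehat{D}_2 \widehat{D}_3 \widehat{D}_4 = D_1 D_2 D_1 D_2$ modulo \eqref{B-relation5}. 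The point is that the linear braid relation \eqref{I25-relation4} together with the quadratic-linear relations \eqref{I25-relation5}, \eqref{I25-relation6} should allow the string $\widehat{D}_1 \widehat{D}_2 \widehat{D}_3 \widehat{D}_4 \widehat{D}_5$ to be rewritten in the subalgebra generated by $D_1, D_2$ as $D_1 D_2 D_1 D_2 D_1$, and similarly on the other side.

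The main obstacle is computational bulk: the basis of $\hat{\mathbf{D}}(W_3)_{\leq 5}$ contains a significantly larger list of monomials than in the $B_2$ case (since we now admit reduced words of length up to $5$ in five generators with no commuting pair), and correspondingly the system $d_1 = \cdots = d_5 = 0$ yields many more equations. Organizing this system efficiently, probably by grading monomials by length and processing degree-by-degree, and correctly identifying which independent parameters produce the Yang--Baxter-type relations \eqref{I25-relation7}, \eqref{I25-relation8}, \eqref{I25-relation9} versus the quadratic-linear relations \eqref{I25-relation5}, \eqref{I25-relation6}, is the delicate combinatorial task. Once this bookkeeping is done, everything else is formally identical to the proof of Theorem \ref{theoremb1}, and (as remarked after Theorem \ref{theoremg2}) the details would be omitted in the actual writeup.
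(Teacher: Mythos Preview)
Your proposal is correct and follows exactly the approach the paper takes: the paper's own proof of Theorem~\ref{theoremi25} reads in its entirety ``The proof of this theorem is exactly the same as that of Theorem~\ref{theoremb1}, but much more complicated. We skip the details,'' and your outline faithfully reproduces that template for the $I_2(5)$ case, including the setup of the $\widehat{D}_k$ (which the paper later writes as $\overline{D}_k$), the derivation constraints from [BK, Proposition~7.33(b)], and the reduction of the braid relation to a consequence of the listed relations (which the paper carries out in Lemma~4.2, equation~\eqref{I25-relation17}).

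One small caveat: you plan to verify that each generator lies in $K_{12}(W_3)$ by identifying it with a $Q_{12}^{(\cdot)}$ or $R_{12}^{(\cdot)}$ element via [BK, Proposition~7.20]. This works for the quadratic--linear generators corresponding to \eqref{I25-relation5}--\eqref{I25-relation7}, but the paper's own Remark after Lemma~4.2 points out that the degree-four element \eqref{i25-K12W3-element4-4} (corresponding to relation~\eqref{I25-relation8}) does \emph{not} belong to the families $(a)$, $(b)$ of Conjecture~\ref{introduction-conjecture2}. So for those two higher-degree generators you should instead verify membership in $K_{12}(W_3)$ directly, by checking $W_3$-invariance of the element in $\hat{\mathbf{D}}(W_3)$ under the module-algebra action~\eqref{b2-module-algebra}; this is a finite computation of the same flavour as the rest of the argument.
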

\begin{proof}
The proof of this theorem is exactly the same as that of Theorem \ref{theoremb1}, but much more complicated. We skip the details.
\end{proof}

Set $\overline{D}_{1} :=D_1,$ $\overline{D}_{2} :=s_1D_2s_1,$ $\overline{D}_{3} :=s_1s_2D_1s_2s_1=s_2s_1D_2s_1s_2,$ $\overline{D}_{4} :=s_2D_1s_2,$ and $\overline{D}_{5} :=D_2.$

We then have that \eqref{I25-relation5} corresponds to the element
\begin{align}\label{i25-K12W3-element1-1}
\overline{D}_{5}\overline{D}_{1}-\overline{D}_{1}\overline{D}_{2}-\overline{D}_{2}\overline{D}_{3}-\overline{D}_{3}\overline{D}_{4}-
\overline{D}_{4}\overline{D}_{5}+\overline{D}_{2}+\overline{D}_{3}+\overline{D}_{4}
\end{align}
in $K_{12}(W_3);$

\eqref{I25-relation6} corresponds to the element
\begin{align}\label{i25-K12W3-element2-2}
\overline{D}_{1}\overline{D}_{5}-\overline{D}_{2}\overline{D}_{1}-\overline{D}_{3}\overline{D}_{2}-\overline{D}_{4}\overline{D}_{3}-
\overline{D}_{5}\overline{D}_{4}+\overline{D}_{2}+\overline{D}_{3}+\overline{D}_{4}
\end{align}
in $K_{12}(W_3);$

\eqref{I25-relation7} corresponds to the element
\begin{align}\label{i25-K12W3-element3-3}
\overline{D}_{4}\overline{D}_{1}+\overline{D}_{5}\overline{D}_{2}-\overline{D}_{1}\overline{D}_{3}-\overline{D}_{2}\overline{D}_{4}-
\overline{D}_{3}\overline{D}_{5}+\overline{D}_{3}
\end{align}
in $K_{12}(W_3);$

\eqref{I25-relation8} corresponds to the element
\begin{align}\label{i25-K12W3-element4-4}
&\overline{D}_{5}\overline{D}_{1}\overline{D}_{5}\overline{D}_{2}-\overline{D}_{5}\overline{D}_{2}\overline{D}_{1}\overline{D}_{2}
+\overline{D}_{5}\overline{D}_{2}-\overline{D}_{1}\overline{D}_{5}\overline{D}_{1}\overline{D}_{5}+
\overline{D}_{1}\overline{D}_{5}\overline{D}_{2}\overline{D}_{1}\notag\\
&\qquad -\overline{D}_{1}\overline{D}_{5}\overline{D}_{2}-
\overline{D}_{2}\overline{D}_{1}\overline{D}_{2}\overline{D}_{1}+
\overline{D}_{2}\overline{D}_{1}+\overline{D}_{2}\overline{D}_{1}\overline{D}_{2}-\overline{D}_{2}
\end{align}
in $K_{12}(W_3);$

\eqref{I25-relation9} corresponds to the element
\begin{align}\label{i25-K12W3-element5-5}
&\overline{D}_{1}\overline{D}_{5}\overline{D}_{1}\overline{D}_{4}-\overline{D}_{1}\overline{D}_{4}\overline{D}_{5}\overline{D}_{4}
+\overline{D}_{1}\overline{D}_{4}-\overline{D}_{5}\overline{D}_{1}\overline{D}_{5}\overline{D}_{1}+
\overline{D}_{5}\overline{D}_{1}\overline{D}_{4}\overline{D}_{5}\notag\\
&\qquad -\overline{D}_{5}\overline{D}_{1}\overline{D}_{4}-
\overline{D}_{4}\overline{D}_{5}\overline{D}_{4}\overline{D}_{5}+
\overline{D}_{4}\overline{D}_{5}+\overline{D}_{4}\overline{D}_{5}\overline{D}_{4}-\overline{D}_{4}
\end{align}
in $K_{12}(W_3);$

From the presentation in Theorem \ref{theoremi25}, we can get the following relations in $\mathbf{H}(W_3)$.
\begin{lemma} We have, in $\mathbf{H}(W_3),$
\begin{align}
D_2s_1&s_2D_1s_2+s_2D_1s_2s_1D_2=\notag\\
&\qquad\qquad s_1D_2s_1D_2s_1+D_1s_2D_1s_2s_1+s_1s_2D_1s_2D_1+s_1s_2D_1s_2s_1;\label{I25-relation10}
\end{align}

\begin{align}
D_1D_2&s_1s_2D_1+s_1D_2D_1s_2D_1+s_1D_2s_1s_2D_1+s_1s_2D_1D_2D_1+s_1s_2D_1s_2D_1=\notag\\
&\qquad D_2D_1D_2s_1s_2+D_2s_1D_2D_1s_2+D_2s_1D_2s_1s_2+D_2s_1s_2D_1D_2+D_2s_1s_2D_1s_2;\label{I25-relation11}
\end{align}

\begin{align}
D_2D_1&s_2s_1D_2+s_2D_1D_2s_1D_2+s_2D_1s_2s_1D_2+s_2s_1D_2D_1D_2+s_2s_1D_2s_1D_2=\notag\\
&\qquad D_1D_2D_1s_2s_1+D_1s_2D_1D_2s_1+D_1s_2D_1s_2s_1+D_1s_2s_1D_2D_1+D_1s_2s_1D_2s_1;\label{I25-relation12}
\end{align}

\begin{align}
s_1s_2&s_1D_2D_1+s_1s_2D_1s_2D_1+s_1s_2D_1D_2s_1+s_1D_2s_1s_2D_1+s_1D_2s_1D_2s_1+\notag\\
&s_1D_2D_1s_2s_1+D_1s_2s_1s_2D_1+D_1s_2s_1D_2s_1+D_1s_2D_1s_2s_1+D_1D_2s_1s_2s_1=\notag\\
s_2s_1&s_2D_1D_2+s_2s_1D_2s_1D_2+s_2s_1D_2D_1s_2+s_2D_1s_2s_1D_2+s_2D_1s_2D_1s_2+\notag\\
&s_2D_1D_2s_1s_2+D_2s_1s_2s_1D_2+D_2s_1s_2D_1s_2+D_2s_1D_2s_1s_2+D_2D_1s_2s_1s_2;\label{I25-relation13}
\end{align}

\begin{align}
s_2D_1&s_2D_1D_2+s_2D_1D_2D_1s_2+D_2s_1D_2s_1D_2+D_2D_1s_2D_1s_2+s_2D_1s_2D_1s_2=\notag\\
&\qquad s_1D_2s_1D_2D_1+s_1D_2D_1D_2s_1+D_1s_2D_1s_2D_1+D_1D_2s_1D_2s_1+s_1D_2s_1D_2s_1;\label{I25-relation14}
\end{align}

\begin{align}
s_1s_2&D_1D_2D_1+s_1D_2s_1D_2D_1+s_1D_2D_1s_2D_1+s_1D_2D_1D_2s_1+D_1s_2s_1D_2D_1+\notag\\
&D_1s_2D_1s_2D_1+D_1s_2D_1D_2s_1+D_1D_2s_1s_2D_1+D_1D_2s_1D_2s_1+D_1D_2D_1s_2s_1=\notag\\
s_2s_1&D_2D_1D_2+s_2D_1s_2D_1D_2+s_2D_1D_2s_1D_2+s_2D_1D_2D_1s_2+D_2s_1s_2D_1D_2+\notag\\
&D_2s_1D_2s_1D_2+D_2s_1D_2D_1s_2+D_2D_1s_2s_1D_2+D_2D_1s_2D_1s_2+D_2D_1D_2s_1s_2;\label{I25-relation15}
\end{align}

\begin{align}
&s_1D_2D_1D_2D_1+D_1s_2D_1D_2D_1+D_1D_2s_1D_2D_1+D_1D_2D_1s_2D_1+D_1D_2D_1D_2s_1=\notag\\
&\qquad s_2D_1D_2D_1D_2+D_2s_1D_2D_1D_2+D_2D_1s_2D_1D_2+D_2D_1D_2s_1D_2+D_2D_1D_2D_1s_2;\label{I25-relation16}
\end{align}

\begin{align}
D_1D_2D_1D_2D_1=D_2D_1D_2D_1D_2.\label{I25-relation17}
\end{align}
\end{lemma}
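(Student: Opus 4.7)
The plan is to mimic the derivation of Lemma \ref{b2-lemma-relations}: each identity \eqref{I25-relation10}--\eqref{I25-relation17} will be established from the defining relations of $\mathbf{H}(W_3)$ in Theorem \ref{theoremi25} (namely \eqref{I25-relation5}--\eqref{I25-relation9}) by multiplying on the left or right by a generator $s_i$ or $D_i$ and simplifying with the rank-$1$ relations \eqref{I25-relation2}--\eqref{I25-relation3} and the linear braid relation \eqref{I25-relation4}. Two consequences of the rank-$1$ relations that I would extract at the outset and use throughout are $D_is_iD_i=-D_i$ (obtained by right-multiplying \eqref{I25-relation3} by $D_i$ and using $D_i^2=D_i$) and $s_iD_is_i=1-s_i-D_i$ (obtained by left-multiplying \eqref{I25-relation3} by $s_i$).

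Before proceeding I would observe two symmetries that cut the work roughly in half. Since $m_{12}=5$ is odd, the swap $(s_1,D_1)\leftrightarrow(s_2,D_2)$ is an automorphism of $W_3$ which lifts to $\mathbf{H}(W_3)$; it pairs \eqref{I25-relation5}$\leftrightarrow$\eqref{I25-relation6}, \eqref{I25-relation7}$\leftrightarrow$\eqref{I25-relation10}, \eqref{I25-relation8}$\leftrightarrow$\eqref{I25-relation9}, \eqref{I25-relation11}$\leftrightarrow$\eqref{I25-relation12} and \eqref{I25-relation14}$\leftrightarrow$\eqref{I25-relation15}. Hence \eqref{I25-relation10}, \eqref{I25-relation12} and \eqref{I25-relation15} are automatic from their partners, and analogously there is an anti-involution reversing order that pairs left-multiplication with right-multiplication arguments. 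It therefore suffices to derive one representative from each orbit.

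The remaining derivations then go as follows, in exact parallel with Lemma \ref{b2-lemma-relations}. Equation \eqref{I25-relation11} is obtained by left-multiplying \eqref{I25-relation5} by $D_1$ and collapsing the resulting monomials via $D_1^{2}=D_1$, $D_1s_1+s_1D_1=s_1-1$ and $D_1s_1D_1=-D_1$, in the same spirit in which \eqref{B-relation5} yielded \eqref{B-relation8}. For \eqref{I25-relation13} I would use the identity $s_1D_2s_1s_2s_1D_2\cdot s_1 = D_1s_2s_1s_2s_1D_1\cdot s_2$ coming from the braid relation $s_1s_2s_1s_2s_1=s_2s_1s_2s_1s_2$, expand both sides by \eqref{I25-relation5} and \eqref{I25-relation6} respectively, and subtract; the difference, after all cancellations guaranteed by the rank-$1$ relations, is precisely \eqref{I25-relation13}. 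Equation \eqref{I25-relation14} follows from \eqref{I25-relation7} by multiplying on both sides by suitable powers of $D_1$ and $D_2$ (so that the $s_iD_is_i$ terms collapse into the desired form), again mirroring the passage from \eqref{B-relation7} to \eqref{B-relation10}; \eqref{I25-relation16} is derived analogously from \eqref{I25-relation8}. Finally, for the braid relation \eqref{I25-relation17}, I would argue as in the last step of the proof of Theorem \ref{theoremb1}: using \eqref{I25-relation5}--\eqref{I25-relation7} and the definitions of $\overline{D}_1,\ldots,\overline{D}_5$ one checks directly that $\overline{D}_1\overline{D}_2\overline{D}_3\overline{D}_4\overline{D}_5=\overline{D}_1\overline{D}_5\overline{D}_1\overline{D}_5\overline{D}_1=D_1D_2D_1D_2D_1$ and, by the order-reversing anti-involution, $\overline{D}_5\overline{D}_4\overline{D}_3\overline{D}_2\overline{D}_1=\overline{D}_5\overline{D}_1\overline{D}_5\overline{D}_1\overline{D}_5=D_2D_1D_2D_1D_2$; the vanishing of the corresponding element of $K_{12}(W_3)$ in $\mathbf{H}(W_3)$ then yields \eqref{I25-relation17}.

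The main obstacle is purely combinatorial bookkeeping: the derivations of \eqref{I25-relation13}, \eqref{I25-relation14} and \eqref{I25-relation16} each involve roughly a dozen monomials that must be matched and cancelled, and the reduction to the identity of \eqref{I25-relation17} with $\overline{D}_1\overline{D}_5\overline{D}_1\overline{D}_5\overline{D}_1 = \overline{D}_5\overline{D}_1\overline{D}_5\overline{D}_1\overline{D}_5$ requires a careful iterated application of \eqref{I25-relation5}. The method is entirely routine but the explicit identification of cancelling pairs is best organised via systematic double-expansion, exactly as was done in the $G_2$ derivations \eqref{G-relation13}--\eqref{G-relation16}.
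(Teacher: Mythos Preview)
Your overall strategy is sound and matches the paper's in spirit, but there are two concrete errors that break the argument.

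\textbf{The symmetry pairing of \eqref{I25-relation14} and \eqref{I25-relation15} is wrong.} Apply the swap $(s_1,D_1)\leftrightarrow(s_2,D_2)$ to \eqref{I25-relation14}: the left side becomes the right side and vice versa, so \eqref{I25-relation14} is sent to itself, not to \eqref{I25-relation15}. The same happens with \eqref{I25-relation15}. Both identities (as well as \eqref{I25-relation13}, \eqref{I25-relation16}, \eqref{I25-relation17}) are individually invariant under the swap, so you cannot get one from the other this way. In the paper, \eqref{I25-relation14} is proved by rewriting it in the variables $\overline D_k$ and combining consequences of \eqref{I25-relation5} and \eqref{I25-relation7}; then \eqref{I25-relation15} is reduced, via \eqref{I25-relation11}, \eqref{I25-relation12}, \eqref{I25-relation7} and \eqref{I25-relation10}, to \eqref{I25-relation14}. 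Your plan omits an actual derivation of \eqref{I25-relation15}.

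\textbf{The argument for \eqref{I25-relation17} is circular.} You reduce $D_1D_2D_1D_2D_1=D_2D_1D_2D_1D_2$ to the vanishing of $\overline D_1\overline D_2\overline D_3\overline D_4\overline D_5-\overline D_5\overline D_4\overline D_3\overline D_2\overline D_1$ in $\mathbf H(W_3)$. But unlike the $B_2$ case (where \eqref{B-relation6} is itself one of the defining relations of Theorem~\ref{theoremb1}), no such top-degree relation appears among \eqref{I25-relation1}--\eqref{I25-relation9}; the element you invoke is precisely what must be derived from the presentation. The paper instead obtains \eqref{I25-relation17} directly from \eqref{I25-relation8}: right-multiply \eqref{I25-relation8} by $D_1$ (and separately by $s_1$), use $D_is_iD_i=-D_i$ to collapse the resulting telescoping terms, and conclude $D_2D_1D_2D_1D_2=D_1D_2D_1D_2D_1$ in two short steps. (Similarly, \eqref{I25-relation16} is not obtained from \eqref{I25-relation8} alone; it comes by adding the $D$-multiplied forms of \eqref{I25-relation8} and \eqref{I25-relation9} and then subtracting \eqref{I25-relation14}.)

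A minor point: for \eqref{I25-relation11} the paper does not left-multiply \eqref{I25-relation5} by $D_1$; it right-multiplies by $s_1$ and then left-multiplies by $D_2$, using $D_2^2=D_2$. Your description would not produce the stated identity.
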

\begin{proof}
\eqref{I25-relation10} By \eqref{I25-relation7} we get that
\begin{align}
s_1D_1s_2&s_1D_2s_1s_2+D_2s_1s_2D_1s_2=\notag\\
&\qquad\qquad s_1s_2D_1s_2D_1+s_1D_2s_1D_2s_1+s_1s_2s_1D_2s_1D_2s_2+s_1s_2s_1D_2s_1,\label{i25relation1}
\end{align}
from which we can easily get \eqref{I25-relation10}.

\eqref{I25-relation11} By \eqref{I25-relation5} we get that
\begin{align}
D_2D_1&D_2s_1s_2+D_2s_1D_2D_1s_2+D_2s_1D_2s_1s_2+D_2s_1s_2D_1D_2\notag\\
&\qquad\qquad+D_2s_1s_2D_1s_2+D_2s_1s_2s_1D_2D_1s_1+D_2s_1s_2s_1D_2=\notag\\
&D_1D_2s_1s_2+s_1D_2D_1s_2+s_1D_2s_1s_2+s_1s_2D_1D_2\notag\\
&\qquad\qquad+s_1s_2D_1s_2+s_1s_2s_1D_2D_1s_1+s_1s_2s_1D_2.\label{i25relation2}
\end{align}
Note that $D_2s_1s_2s_1D_2D_1s_1+D_2s_1s_2s_1D_2=-D_2s_1s_2s_1D_2s_1D_1+D_2s_1s_2s_1D_2s_1,$ from which we can easily get \eqref{I25-relation11}.

\eqref{I25-relation12} It can be proved similarly.

\eqref{I25-relation13} By \eqref{I25-relation7} and \eqref{I25-relation10} we get that \eqref{I25-relation13} is equivalent to the following equality:
\begin{align}
s_1s_2&s_1D_2D_1+s_1s_2D_1D_2s_1+s_1D_2D_1s_2s_1+D_1s_2s_1s_2D_1+D_1D_2s_1s_2s_1=\notag\\
&s_2s_1s_2D_1D_2+s_2s_1D_2D_1s_2+s_2D_1D_2s_1s_2+D_2s_1s_2s_1D_2+D_2D_1s_2s_1s_2.\label{i25relation3}
\end{align}
By \eqref{I25-relation5} and \eqref{I25-relation6} we get that \eqref{i25relation3} is equivalent to the following equality:
\begin{align}
s_2D_1s_2s_1s_2+s_2s_1D_2s_1s_2+s_2s_1s_2D_1s_2=s_1D_2s_1s_2s_1+s_1s_2D_1s_2s_1+s_1s_2s_1D_2s_1,\label{i25relation4}
\end{align}
which can be easily deduced from \eqref{I25-relation4}.

\eqref{I25-relation14} It is easily to see that \eqref{I25-relation14} is equivalent to the following equality:
\begin{align}
\overline{D}_{4}\overline{D}_{1}\overline{D}_{5}&-\overline{D}_{4}\overline{D}_{5}\overline{D}_{4}+
\overline{D}_{4}+\overline{D}_{5}\overline{D}_{2}\overline{D}_{5}+\overline{D}_{5}\overline{D}_{1}\overline{D}_{4}=\notag\\
&\overline{D}_{2}\overline{D}_{5}\overline{D}_{1}-\overline{D}_{2}\overline{D}_{1}\overline{D}_{2}+\overline{D}_{2}+
\overline{D}_{1}\overline{D}_{4}\overline{D}_{1}+\overline{D}_{1}\overline{D}_{5}\overline{D}_{2}.\label{i25relation5}
\end{align}
By \eqref{I25-relation7} we have that
\begin{align}
\overline{D}_{4}\overline{D}_{1}+\overline{D}_{5}\overline{D}_{2}-\overline{D}_{1}\overline{D}_{3}-\overline{D}_{2}\overline{D}_{4}-
\overline{D}_{3}\overline{D}_{5}+\overline{D}_{3}=0.\label{i25relation6}
\end{align}
By \eqref{i25relation6} we have that
\begin{align}
\overline{D}_{1}\overline{D}_{4}\overline{D}_{1}+\overline{D}_{1}\overline{D}_{5}\overline{D}_{2}-\overline{D}_{1}\overline{D}_{2}\overline{D}_{4}-
\overline{D}_{1}\overline{D}_{3}\overline{D}_{5}=0,\label{i25relation7}
\end{align}
and
\begin{align}
\overline{D}_{4}\overline{D}_{1}\overline{D}_{5}+\overline{D}_{5}\overline{D}_{2}\overline{D}_{5}-
\overline{D}_{1}\overline{D}_{3}\overline{D}_{5}-\overline{D}_{2}\overline{D}_{4}\overline{D}_{5}=0.\label{i25relation8}
\end{align}
Combing \eqref{i25relation7} and \eqref{i25relation8} we have that
\begin{align}
\overline{D}_{1}\overline{D}_{4}\overline{D}_{1}+\overline{D}_{1}\overline{D}_{5}\overline{D}_{2}+\overline{D}_{2}\overline{D}_{4}\overline{D}_{5}=
\overline{D}_{4}\overline{D}_{1}\overline{D}_{5}+\overline{D}_{5}\overline{D}_{2}\overline{D}_{5}+
\overline{D}_{1}\overline{D}_{2}\overline{D}_{4}.\label{i25relation9}
\end{align}
By \eqref{I25-relation5} we get that
\begin{align}
\overline{D}_{5}\overline{D}_{1}-\overline{D}_{1}\overline{D}_{2}-\overline{D}_{2}\overline{D}_{3}-\overline{D}_{3}\overline{D}_{4}-
\overline{D}_{4}\overline{D}_{5}+\overline{D}_{2}+\overline{D}_{3}+\overline{D}_{4}=0.\label{i25relation10}
\end{align}
By \eqref{i25relation10} we get that
\begin{align}
\overline{D}_{5}\overline{D}_{1}\overline{D}_{4}-\overline{D}_{1}\overline{D}_{2}\overline{D}_{4}-
\overline{D}_{2}\overline{D}_{3}\overline{D}_{4}-\overline{D}_{4}\overline{D}_{5}\overline{D}_{4}
+\overline{D}_{2}\overline{D}_{4}+\overline{D}_{4}=0.\label{i25relation11}
\end{align}
and
\begin{align}
\overline{D}_{2}\overline{D}_{5}\overline{D}_{1}-\overline{D}_{2}\overline{D}_{1}\overline{D}_{2}-\overline{D}_{2}\overline{D}_{3}\overline{D}_{4}-
\overline{D}_{2}\overline{D}_{4}\overline{D}_{5}+\overline{D}_{2}+\overline{D}_{2}\overline{D}_{4}=0.\label{i25relation12}
\end{align}
Combing \eqref{i25relation11} and \eqref{i25relation12} we get that
\begin{align}
\overline{D}_{2}\overline{D}_{5}\overline{D}_{1}-\overline{D}_{2}\overline{D}_{1}\overline{D}_{2}-
\overline{D}_{2}\overline{D}_{4}\overline{D}_{5}+\overline{D}_{2}=\overline{D}_{5}\overline{D}_{1}\overline{D}_{4}
-\overline{D}_{1}\overline{D}_{2}\overline{D}_{4}
-\overline{D}_{4}\overline{D}_{5}\overline{D}_{4}
+\overline{D}_{4}.\label{i25relation13}
\end{align}
By combing \eqref{i25relation9} and \eqref{i25relation13} we can easily see that \eqref{i25relation5} holds.

\eqref{I25-relation15} By \eqref{I25-relation11} and \eqref{I25-relation12} we have that \eqref{I25-relation15} is equivalent to the following equality:
\begin{align}
&s_2D_1s_2D_1D_2+s_2D_1D_2D_1s_2+D_2s_1D_2s_1D_2+D_2D_1s_2D_1s_2\notag\\
&\qquad -D_2s_1D_2s_1s_2-D_2s_1s_2D_1s_2-s_2D_1s_2s_1D_2-s_2s_1D_2s_1D_2=\notag\\
&s_1D_2s_1D_2D_1+s_1D_2D_1D_2s_1+D_1s_2D_1s_2D_1+D_1D_2s_1D_2s_1\notag\\
&\qquad -s_1D_2s_1s_2D_1-s_1s_2D_1s_2D_1-D_1s_2D_1s_2s_1-D_1s_2s_1D_2s_1.\label{i25relation14}
\end{align}
By \eqref{I25-relation7} and \eqref{I25-relation10} it is easy to see that \eqref{i25relation14} is equivalent to \eqref{I25-relation14}.

\eqref{I25-relation16} By \eqref{I25-relation8} and \eqref{I25-relation9} we have that
\begin{align}
&D_1D_2D_1D_2s_1+D_1D_2s_1D_2D_1+D_1D_2s_1D_2s_1+s_1D_2D_1D_2D_1+s_1D_2D_1D_2s_1\notag\\
&\qquad +s_1D_2s_1D_2D_1+s_1D_2s_1D_2s_1+D_1D_2D_1s_2D_1+D_1s_2D_1D_2D_1+D_1s_2D_1s_2D_1=\notag\\
&D_2D_1D_2s_1D_2+D_2s_1D_2D_1D_2+D_2s_1D_2s_1D_2+D_2D_1D_2D_1s_2+D_2D_1s_2D_1D_2\notag\\
&\qquad +D_2D_1s_2D_1s_2+s_2D_1D_2D_1D_2+s_2D_1D_2D_1s_2+s_2D_1s_2D_1D_2+s_2D_1s_2D_1s_2.\label{i25relation15}
\end{align}
Combing \eqref{I25-relation14} and \eqref{i25relation15} it is easy to see that \eqref{I25-relation16} holds.

\eqref{I25-relation17} By \eqref{I25-relation8} we have that
\begin{align}
D_2D_1&D_2D_1D_2+D_2D_1D_2s_1D_2D_1s_1+D_2D_1D_2s_1D_2+D_2s_1D_2D_1D_2D_1s_1\notag\\
&\qquad+D_2s_1D_2D_1D_2+D_2s_1D_2s_1D_2D_1s_1+D_2s_1D_2s_1D_2=\notag\\
&\qquad\qquad D_2D_1D_2s_1D_2s_1+D_2s_1D_2D_1D_2s_1+D_2s_1D_2s_1D_2s_1.\label{i25relation16}
\end{align}
By \eqref{I25-relation8} and \eqref{i25relation16} we have that
\begin{align}
D_2D_1D_2&D_1D_2=(D_2D_1D_2s_1D_2+D_2s_1D_2D_1D_2+D_2s_1D_2s_1D_2)s_1D_1=\notag\\
&D_1D_2D_1D_2D_1+D_1D_2s_1D_2D_1s_1D_1+D_1D_2s_1D_2D_1+s_1D_2D_1D_2D_1s_1D_1\notag\\
&\qquad +s_1D_2D_1D_2D_1+s_1D_2s_1D_2D_1s_1D_1+s_1D_2s_1D_2D_1\notag\\
&\qquad \qquad =D_1D_2D_1D_2D_1.\label{i25relation17}
\end{align}
\end{proof}

\begin{remark}
Recall from [BK, (1.3)] that it has been shown that the following relation holds in ${\bf D}(W_{3})$:
\begin{align}\label{i25-remark-relati123}
\overline{D}_{1}\overline{D}_{2}\overline{D}_{3}\overline{D}_{4}+(\overline{D}_{1}\overline{D}_{2}\overline{D}_{4}+
&\overline{D}_{2}\overline{D}_{3}\overline{D}_{4}-\overline{D}_{2}\overline{D}_{4})(\overline{D}_{5}-1)\notag\\
&\qquad=\overline{D}_{5}\overline{D}_{4}\overline{D}_{3}\overline{D}_{1}+
\overline{D}_{5}\overline{D}_{3}\overline{D}_{2}\overline{D}_{1}-\overline{D}_{5}\overline{D}_{3}\overline{D}_{1}.
\end{align}

If we admit that the equality \eqref{I25-relation6} in Theorem \ref{theoremi25} holds, that is, the element \eqref{i25-K12W3-element2-2} equals zero, then by a direct calculation it is easy to see that the right-hand side of \eqref{i25-remark-relati123} equals $\overline{D}_{5}\overline{D}_{1}\overline{D}_{5}\overline{D}_{1},$ and \eqref{i25-remark-relati123} is equivalent to the equality \eqref{I25-relation8}, that is, the fact that the element \eqref{i25-K12W3-element4-4} equals zero. It is easy to see that the element \eqref{i25-K12W3-element4-4} does not belong to the elements $(a)$ and $(b)$ listed in Conjecture \ref{introduction-conjecture2}. Thus, Conjecture \ref{introduction-conjecture2} does not hold for the Coxeter group $W_{3}.$
\end{remark}

\section{Proof of Conjecture \ref{introduction-conjecture1} and counterexample of Conjecture \ref{introduction-conjecture2}}
In this section we first give the proof of Conjecture \ref{introduction-conjecture1} by using the results in Sections 2 and 3.

Combining [BK, Theorem 1.24] with Theorems \ref{theoremb1}, \ref{theoremg2} and \ref{theoremi25}, we immediately get the following result.
\begin{theorem}\label{representation-theorem}
Assume that $W$ is a Coxeter group generated by the elements $s_i$ $(i\in I)$ such that the parameter $m_{ij},$ associated to any two distinct vertices $i$ and $j$ in the presentation of $W,$ is less than or equal to $6.$

Associated to $W,$ the Hecke-Hopf algebra $\mathbf{H}(W)$ has the following presentation$:$ it is generated by the elements $s_i$ and $D_{i}$ $(i\in I)$ with the following relations$:$
\begin{align}\label{representation-relation1-1}
s_i^2=1,\quad s_iD_i+D_is_i=s_i-1,\quad D_i^2=D_i \text{ for } i\in I;
\end{align}
if $m_{ij}=2$, we have
\begin{align}\label{representation-relation2-2}
s_js_i=s_is_j,\quad D_js_i=s_iD_j,\quad D_jD_i=D_iD_j;
\end{align}
if $m_{ij}=3$, we have
\begin{align}\label{representation-relation3-3}
s_js_is_j=s_is_js_i,\quad D_is_js_i=s_js_iD_j,\quad D_js_iD_j=s_iD_jD_i+D_iD_js_i+s_iD_js_i;
\end{align}
if $m_{ij}=4$, we have
\begin{align}\label{representation-relation4-4-1}
s_is_js_is_j=s_js_is_js_i,\quad D_is_js_is_j=s_js_is_jD_i,
\end{align}
\begin{align}\label{representation-relation4-4-2}
D_{j}s_is_{j}D_i=D_{i}D_{j}s_is_j+s_{i}D_{j}D_is_j+s_is_{j}D_{i}D_j+s_is_{j}D_is_j+s_{i}D_{j}s_is_j,
\end{align}
\begin{align}\label{representation-relation4-4-3}
D_{i}D_{j}D_{i}D_{j}=D_{j}D_{i}D_{j}D_{i};
\end{align}
if $m_{ij}=5$, we have
\begin{align}\label{representation-relation6-6-1}
s_{i}s_{j}s_{i}s_{j}s_{i}=s_{j}s_{i}s_{j}s_{i}s_{j},\quad D_js_{i}s_{j}s_{i}s_{j}=s_{i}s_{j}s_{i}s_{j}D_i,
\end{align}
\begin{align}\label{representation-relation6-6-2}
D_is_{j}&s_{i}s_{j}D_i=D_jD_is_{j}s_{i}s_{j}+s_{j}D_iD_js_{i}s_{j}+s_{j}D_is_{j}s_{i}s_{j}\notag\\
&\qquad\qquad+s_{j}s_{i}D_jD_is_{j}+s_{j}s_{i}D_js_{i}s_{j}+s_{j}s_{i}s_{j}D_iD_j+s_{j}s_{i}s_{j}D_is_{j},
\end{align}
\begin{align}\label{representation-relation6-6-3}
D_is_{j}&s_{i}D_js_{i}+s_{i}D_js_{i}s_{j}D_i=\notag\\
&\qquad\qquad s_{j}D_is_{j}D_is_{j}+D_js_{i}D_js_{i}s_{j}+s_{j}s_{i}D_js_{i}D_j+s_{j}s_{i}D_js_{i}s_{j},
\end{align}
\begin{align}\label{representation-relation6-6-4}
D_iD_j&D_is_{j}D_i+D_is_{j}D_iD_jD_i+D_is_{j}D_is_{j}D_i=\notag\\
&\qquad D_jD_iD_jD_is_{j}+D_jD_is_{j}D_iD_j+D_jD_is_{j}D_is_{j}+s_{j}D_iD_jD_iD_j\notag\\
&\qquad\qquad+s_{j}D_iD_jD_is_{j}+s_{j}D_is_{j}D_iD_j+s_{j}D_is_{j}D_is_{j};
\end{align}
if $m_{ij}=6$, we have
\begin{align}\label{representation-relation5-5-1}
s_is_js_is_js_is_j=s_js_is_js_is_js_i,\quad D_is_js_is_js_is_j=s_js_is_js_is_jD_i,
\end{align}
\begin{align}\label{representation-relation5-5-2}
D_is_js_i&s_js_iD_j=D_jD_is_js_is_js_i+s_jD_iD_js_is_js_i+s_js_iD_jD_is_js_i\notag\\
&\qquad \qquad +s_js_is_jD_iD_js_i+s_js_is_js_iD_jD_i+s_js_iD_js_is_js_i\notag\\
&\qquad \qquad +s_js_is_js_iD_js_i+s_jD_is_js_is_js_i+s_js_is_jD_is_js_i,
\end{align}
\begin{align}\label{representation-relation5-5-3}
D_js_{i}s_{j}D_is_{j}s_{i}=s_{i}s_{j}D_is_{j}s_{i}D_j,
\end{align}
\begin{align}\label{representation-relation5-5-4}
D_is_{j}s_{i}s_{j}D_is_{j}=s_{j}s_{i}s_{j}D_is_{j}D_i+s_{j}D_is_{j}D_is_{j}s_{i}+s_{j}s_{i}s_{j}D_is_{j}s_{i},
\end{align}
\begin{align}\label{representation-relation5-5-5}
D_jD_i&D_js_{i}s_{j}D_i+D_js_{i}D_jD_is_{j}D_i+D_js_{i}D_js_{i}s_{j}D_i\notag\\
&\qquad\qquad\qquad +D_js_{i}s_{j}D_iD_jD_i+D_js_{i}s_{j}D_is_{j}D_i=\notag\\
&D_iD_jD_iD_js_{i}s_{j}+D_iD_js_{i}D_jD_is_{j}+D_iD_js_{i}D_js_{i}s_{j}+D_iD_js_{i}s_{j}D_iD_j\notag\\
&+D_iD_js_{i}s_{j}D_is_{j}+s_{i}D_jD_iD_jD_is_{j}+s_{i}D_jD_iD_js_{i}s_{j}+s_{i}D_jD_is_{j}D_iD_j\notag\\
&+s_{i}D_jD_is_{j}D_is_{j}+s_{i}D_js_{i}D_jD_is_{j}+s_{i}D_js_{i}D_js_{i}s_{j}+s_{i}D_js_{i}s_{j}D_iD_j\notag\\
&+s_{i}D_js_{i}s_{j}D_is_{j}+s_{i}s_{j}D_iD_jD_iD_j+s_{i}s_{j}D_iD_jD_is_{j}+s_{i}s_{j}D_is_{j}D_iD_j\notag\\
&+s_{i}s_{j}D_is_{j}D_is_{j},
\end{align}
\begin{align}\label{representation-relation5-5-6}
s_{i}D_js_{i}&D_jD_iD_j+s_{i}D_jD_iD_js_{i}D_j+D_iD_js_{i}D_js_{i}D_j=\notag\\
&\qquad \qquad \qquad D_js_{i}D_js_{i}D_jD_i+D_js_{i}D_jD_iD_js_{i}+D_jD_iD_js_{i}D_js_{i},
\end{align}
\begin{align}\label{representation-relation5-5-7}
D_iD_jD_iD_jD_iD_j=D_jD_iD_jD_iD_jD_i.
\end{align}
\end{theorem}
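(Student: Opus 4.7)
The plan is to assemble the presentation by combining the rank-two results of Sections 2, 3, 4 with the simply-laced case from [BK, Theorem 1.24]. The key structural input is the definition $\mathbf{H}(W)=\hat{\mathbf{H}}(W)/\underline{\mathbf{J}}(W)$, where $\underline{\mathbf{J}}(W)$ is the ideal generated by the subspaces $\mathbf{K}_{ij}(W)\subseteq\hat{\mathbf{D}}(W_{\{i,j\}})$ for distinct $i,j\in I$. Since $\mathbf{K}_{ij}(W)$ is defined intrinsically inside the rank-two parabolic subalgebra $\hat{\mathbf{D}}(W_{\{i,j\}})$, it depends only on the value of $m_{ij}$, and hence it suffices to describe the defining relations one pair $(i,j)$ at a time.

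First I would record the relations already present in $\hat{\mathbf{H}}(W)$: namely \eqref{representation-relation1-1}, the Coxeter braid relations $(s_is_j)^{m_{ij}}=1$, and the linear braid relations of item (iii) in the definition of $\hat{\mathbf{H}}(W)$. Specialized to $m_{ij}\in\{2,3,4,5,6\}$, these produce exactly the $s$-only braid identity and the mixed $D$--$s$ braid identity that appear on the first line of each case, i.e. the two equalities in \eqref{representation-relation2-2} (first part), \eqref{representation-relation3-3} (first two), \eqref{representation-relation4-4-1}, \eqref{representation-relation6-6-1}, and \eqref{representation-relation5-5-1}. For the remaining ``$\mathbf{K}_{ij}$-relations'' I would invoke Theorem \ref{introduction-theorem} when $m_{ij}\in\{2,3\}$, Theorem \ref{theoremb1} when $m_{ij}=4$, Theorem \ref{theoremi25} when $m_{ij}=5$, and Theorem \ref{theoremg2} when $m_{ij}=6$; each of these identifies generators of $\mathbf{K}_{ij}$ modulo the linear braid relations, contributing the remaining displayed equalities: the third part of \eqref{representation-relation2-2} and \eqref{representation-relation3-3}; then \eqref{representation-relation4-4-2}--\eqref{representation-relation4-4-3}; then \eqref{representation-relation6-6-2}--\eqref{representation-relation6-6-4}; and finally \eqref{representation-relation5-5-2}--\eqref{representation-relation5-5-7}.

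The main obstacle has already been overcome in proving the four rank-two presentations: in each of them one had to compute $\mathbf{K}_{ij}$ explicitly via the $\mathbb{Z}W_{\{i,j\}}$-module-algebra action on $\hat{\mathbf{D}}(W_{\{i,j\}})$, solve a large linear system to pin down minimal generators, and verify that the resulting ideal is generated by the listed elements. Granting those inputs, the assembly at this stage is essentially bookkeeping: because $\mathbf{K}_{ij}(W)$ is local to $W_{\{i,j\}}$, there is no interaction between different pairs, so the union of the rank-one relations \eqref{representation-relation1-1}, the pairwise Coxeter and linear braid relations from $\hat{\mathbf{H}}(W)$, and the rank-two $\mathbf{K}_{ij}$-relations recalled above yields a complete defining set for $\mathbf{H}(W)$, matching the statement of the theorem precisely.
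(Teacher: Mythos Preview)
Your proposal is correct and follows exactly the paper's approach: the paper's proof consists of the single sentence ``Combining [BK, Theorem 1.24] with Theorems \ref{theoremb1}, \ref{theoremg2} and \ref{theoremi25}, we immediately get the following result,'' and you have simply spelled out the structural reason (locality of $\mathbf{K}_{ij}(W)$ to $W_{\{i,j\}}$) behind why this combination suffices.
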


\noindent{\bf Verification of Conjecture \ref{introduction-conjecture1}} Combining the results in Sections 2 and 3, we immediately get that Conjecture \ref{introduction-conjecture1} is true when the Coxeter group $W$ is crystallographic and non-simply-laced, that is, the parameter $m_{ij},$ associated to any two distinct vertices $i$ and $j$ in the presentation of $W$, equals $4$ and $6$.

\noindent{\bf Counterexample to Conjecture \ref{introduction-conjecture2}} By the results in Section 2, we immediately get that Conjecture \ref{introduction-conjecture2} is true when the parameter $m_{ij},$ associated to any two distinct vertices $i$ and $j$ in the presentation of the Coxeter group $W$, is less than or equal to $4.$ By the results in Section 3, we see that Conjecture \ref{introduction-conjecture2} does not hold when there exists some parameter $m_{kl},$ associated to two distinct vertices $k$ and $l$ in the presentation of $W$, which equals $6.$

Recall that in the definition of the Hecke-Hopf algebra $\mathbf{H}(W)$, it has been claimed that $\mathbf{H}(W)$ is a Hopf algebra (see [BK, Theorem 1.23]). In fact we have the following result.
\begin{theorem}\label{hopf-algebra-structure}
In the assumption of Theorem \ref{representation-theorem}, the Hecke-Hopf algebra $\mathbf{H}(W)$ is a Hopf algebra with the coproduct $\Delta,$ the counit $\varepsilon,$ and the antipode $S$ given respectively by $($for $i\in I):$
$$\Delta(s_i)=s_i\otimes s_i,~\Delta(D_i)=D_i\otimes 1+s_i\otimes D_i,~\varepsilon(w)=1, ~\varepsilon(D_i)=0,~ S(s_i)=s_i,~S(D_i)=-s_iD_i.$$
\end{theorem}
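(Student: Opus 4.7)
The plan is to deduce the theorem as an immediate consequence of two results of Berenstein--Kazhdan: the Hopf algebra structure on $\hat{\mathbf{H}}(W)$ from [BK, Theorem 1.15], and the fact from [BK, Theorem 1.23] that the ideal $\underline{\mathbf{J}}(W)$ is a Hopf ideal of $\hat{\mathbf{H}}(W)$. Since $\mathbf{H}(W)$ is defined to be the quotient $\hat{\mathbf{H}}(W)/\underline{\mathbf{J}}(W)$, and a Hopf algebra modulo a Hopf ideal carries a canonical induced Hopf algebra structure, the conclusion is automatic; only the formulas on the generators need to be read off.

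More explicitly, I would first recall from [BK, Theorem 1.15] that $\hat{\mathbf{H}}(W)$ is a Hopf algebra whose coproduct $\Delta$, counit $\varepsilon$, and antipode $S$ act on the generators $s_i$ and $D_i$ by precisely the formulas stated in the theorem. Next, I would invoke [BK, Theorem 1.23] to obtain $\Delta(\underline{\mathbf{J}}(W)) \subseteq \underline{\mathbf{J}}(W)\otimes \hat{\mathbf{H}}(W) + \hat{\mathbf{H}}(W)\otimes \underline{\mathbf{J}}(W)$, $\varepsilon(\underline{\mathbf{J}}(W))=0$, and $S(\underline{\mathbf{J}}(W)) \subseteq \underline{\mathbf{J}}(W)$. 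Standard Hopf algebra formalism then produces well-defined maps $\bar{\Delta}$, $\bar{\varepsilon}$, $\bar{S}$ on the quotient $\mathbf{H}(W)$ satisfying the bialgebra and antipode axioms, and the values of these induced maps on the images of $s_i$ and $D_i$ are precisely the formulas in the statement of Theorem \ref{hopf-algebra-structure}.

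The only point that could conceivably obstruct this argument is whether the explicit relations added in passing to the presentation of $\mathbf{H}(W)$ given in Theorem \ref{representation-theorem} (for instance \eqref{representation-relation4-4-2}, \eqref{representation-relation6-6-2}, and \eqref{representation-relation5-5-5}) really do lie inside the Hopf ideal $\underline{\mathbf{J}}(W)$. But this is exactly what was shown during the proofs of Theorems \ref{theoremb1}, \ref{theoremg2}, and \ref{theoremi25}, where each such relation was matched with an explicit element of some $\mathbf{K}_{ij}(W)$. Consequently no additional verification is needed, and Theorem \ref{hopf-algebra-structure} follows directly from combining Theorem \ref{representation-theorem} with the two cited results of Berenstein--Kazhdan.
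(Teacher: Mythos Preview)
Your proposal is correct and matches the paper's approach: the paper states Theorem \ref{hopf-algebra-structure} without a separate proof, simply noting beforehand that $\mathbf{H}(W)$ is a Hopf algebra by [BK, Theorem~1.23]; your argument is exactly the natural unpacking of that citation together with [BK, Theorem~1.15] and the identification of the presentation in Theorem \ref{representation-theorem} with the quotient $\hat{\mathbf{H}}(W)/\underline{\mathbf{J}}(W)$ established in Theorems \ref{theoremb1}, \ref{theoremg2}, and \ref{theoremi25}.
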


\section{Generalized affine Hecke algebras}
Let $G$ be a connected reductive group over $\mathbb{C}$ and $T$ a maximal torus of $G.$ Let $N_{G}(T)$ be the normalizer of $T$ in $G.$ Then $W_{0}=N_{G}(T)/T$ is a Weyl group, which acts on the co-character group $P^{\vee} :=\mathrm{Hom}(\mathbb{C}^{*}, T).$ Consider the semi-direct product $W=P^{\vee}\rtimes W_{0}.$ Let $R^{\vee}$ be the set of coroots, which spans the coroot lattice $Q^{\vee}.$ Assume that $\{s_{i}\:|\:i\in I\}$ is a set of simple reflections in $W_{0}$ and $\{\alpha_{i}^{\vee}\:|\:i\in I\}$ is a set of simple coroots in $R^{\vee}.$ For the rest of this section we will assume that $G$ is semisimple and simply connected.

We now define the affine Hecke algebra associated to $W$ following J. Bernstein (unpublished; see [Lu]).
\begin{definition}\label{definition-affine-Heck-algs}
Given a commutative unital ring $\Bbbk$ and $\underline{\bf q}=(q_{i})\in \Bbbk^{I}$ such that $q_{i}=q_{j}$ whenever $s_{i}$ and $s_{j}$ are conjugate in $W_{0}.$ The affine Hecke algebra $H_{\underline{\bf q}}(W)$ is a $\Bbbk$-algebra generated by the elements $T_{i}$ $(i\in I)$ and $X^{\lambda}$ $(\lambda\in P^{\vee})$ with the following relations:

$(1)$ $(T_{i}-1)(T_{i}+q_{i})=0$ for $i\in I;$\vspace{0.1cm}

$(2)$ $\underbrace{T_iT_jT_{i}\cdots}_{m_{ij}} =\underbrace{T_jT_iT_{j}\cdots}_{m_{ij}}$ for all distinct $i, j\in I;$\vspace{0.1cm}

$(3)$ $X^{\lambda}X^{\lambda'}=X^{\lambda'}X^{\lambda}$ for any $\lambda, \lambda'\in P^{\vee};$\vspace{0.1cm}

$(4)$ $T_{i}X^{\lambda}=X^{s_{i}(\lambda)}T_{i}+(1-q_{i})\frac{X^{\lambda}-X^{s_{i}(\lambda)}}{1-X^{-\alpha_{i}^{\vee}}}$ for $i\in I$ and $\lambda\in P^{\vee}.$
\end{definition}

Associated to $W,$ we next define the generalized affine Hecke algebra $\widetilde{{\bf H}}(W)$ over $\mathbb{Z}$ as follows.
\begin{definition}\label{definition-gene}
The generalized affine Hecke algebra $\widetilde{{\bf H}}(W)$ is a $\mathbb{Z}$-algebra generated by the elements $s_i,$ $D_{i}$ $(i\in I)$ and $X^{\lambda}$ $(\lambda\in P^{\vee})$ with the following relations$:$
\begin{align}\label{representation-relation6-1-1}
s_i^2=1,\quad s_iD_i+D_is_i=s_i-1,\quad D_i^2=D_i \text{ for } i\in I;
\end{align}
if $m_{ij}=2$, we have
\begin{align}\label{representation-relation6-2-2}
s_js_i=s_is_j,\quad D_js_i=s_iD_j,\quad D_jD_i=D_iD_j;
\end{align}
if $m_{ij}=3$, we have
\begin{align}\label{representation-relation6-3-3}
s_js_is_j=s_is_js_i,\quad D_is_js_i=s_js_iD_j,\quad D_js_iD_j=s_iD_jD_i+D_iD_js_i+s_iD_js_i;
\end{align}
if $m_{ij}=4$, we have
\begin{align}\label{representation-relation6-4-4-1}
s_is_js_is_j=s_js_is_js_i,\quad D_is_js_is_j=s_js_is_jD_i,
\end{align}
\begin{align}\label{representation-relation6-4-4-2}
D_{j}s_is_{j}D_i=D_{i}D_{j}s_is_j+s_{i}D_{j}D_is_j+s_is_{j}D_{i}D_j+s_is_{j}D_is_j+s_{i}D_{j}s_is_j,
\end{align}
\begin{align}\label{representation-relation6-4-4-3}
D_{i}D_{j}D_{i}D_{j}=D_{j}D_{i}D_{j}D_{i};
\end{align}
if $m_{ij}=6$, we have
\begin{align}\label{representation-relation6-5-5-1}
s_is_js_is_js_is_j=s_js_is_js_is_js_i,\quad D_is_js_is_js_is_j=s_js_is_js_is_jD_i,
\end{align}
\begin{align}\label{representation-relation5-5-2}
D_is_js_i&s_js_iD_j=D_jD_is_js_is_js_i+s_jD_iD_js_is_js_i+s_js_iD_jD_is_js_i\notag\\
&\qquad \qquad +s_js_is_jD_iD_js_i+s_js_is_js_iD_jD_i+s_js_iD_js_is_js_i\notag\\
&\qquad \qquad +s_js_is_js_iD_js_i+s_jD_is_js_is_js_i+s_js_is_jD_is_js_i,
\end{align}
\begin{align}\label{representation-relation6-5-5-3}
D_js_{i}s_{j}D_is_{j}s_{i}=s_{i}s_{j}D_is_{j}s_{i}D_j,
\end{align}
\begin{align}\label{representation-relation6-5-5-4}
D_is_{j}s_{i}s_{j}D_is_{j}=s_{j}s_{i}s_{j}D_is_{j}D_i+s_{j}D_is_{j}D_is_{j}s_{i}+s_{j}s_{i}s_{j}D_is_{j}s_{i},
\end{align}
\begin{align}\label{representation-relation6-5-5-5}
D_jD_i&D_js_{i}s_{j}D_i+D_js_{i}D_jD_is_{j}D_i+D_js_{i}D_js_{i}s_{j}D_i\notag\\
&\qquad\qquad\qquad +D_js_{i}s_{j}D_iD_jD_i+D_js_{i}s_{j}D_is_{j}D_i=\notag\\
&D_iD_jD_iD_js_{i}s_{j}+D_iD_js_{i}D_jD_is_{j}+D_iD_js_{i}D_js_{i}s_{j}+D_iD_js_{i}s_{j}D_iD_j\notag\\
&+D_iD_js_{i}s_{j}D_is_{j}+s_{i}D_jD_iD_jD_is_{j}+s_{i}D_jD_iD_js_{i}s_{j}+s_{i}D_jD_is_{j}D_iD_j\notag\\
&+s_{i}D_jD_is_{j}D_is_{j}+s_{i}D_js_{i}D_jD_is_{j}+s_{i}D_js_{i}D_js_{i}s_{j}+s_{i}D_js_{i}s_{j}D_iD_j\notag\\
&+s_{i}D_js_{i}s_{j}D_is_{j}+s_{i}s_{j}D_iD_jD_iD_j+s_{i}s_{j}D_iD_jD_is_{j}+s_{i}s_{j}D_is_{j}D_iD_j\notag\\
&+s_{i}s_{j}D_is_{j}D_is_{j},
\end{align}
\begin{align}\label{representation-relation6-5-5-6}
s_{i}D_js_{i}&D_jD_iD_j+s_{i}D_jD_iD_js_{i}D_j+D_iD_js_{i}D_js_{i}D_j=\notag\\
&\qquad \qquad \qquad D_js_{i}D_js_{i}D_jD_i+D_js_{i}D_jD_iD_js_{i}+D_jD_iD_js_{i}D_js_{i},
\end{align}
\begin{align}\label{representation-relation6-5-5-7}
D_iD_jD_iD_jD_iD_j=D_jD_iD_jD_iD_jD_i;
\end{align}
\begin{align}\label{representation-relation6-6-6-1}
s_iX^{\lambda}=X^{s_{i}(\lambda)}s_i\text{ for }i\in I\text{ and }\lambda\in P^{\vee};
\end{align}
\begin{align}\label{representation-relation6-7-7-1}
D_{i}X^{\lambda}=X^{s_{i}(\lambda)}D_{i}+\frac{X^{\lambda}-X^{s_{i}(\lambda)}}{1-X^{-\alpha_{i}^{\vee}}}\text{ for }i\in I\text{ and }\lambda\in P^{\vee}.
\end{align}
\end{definition}

The following theorem gives a PBW basis of $\widetilde{{\bf H}}(W).$
\begin{theorem}\label{theorem-pbw-basis}
The multiplication map defines an isomorphism of $\mathbb{Z}$-modules
\begin{align}\label{isomorphism-zmodules}
{\bf D}(W_{0})\otimes \mathbb{Z}[W_{0}]\otimes \mathbb{Z}[P^{\vee}]\overset{\sim}{\longrightarrow}\widetilde{{\bf H}}(W).
\end{align}
\end{theorem}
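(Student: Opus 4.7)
The plan is to identify $\widetilde{{\bf H}}(W)$ with a Hopf smash product of ${\bf H}(W_0)$ with $\mathcal{L}=\mathbb{Z}[P^\vee]$, and then invoke the PBW decomposition of ${\bf H}(W_0)$ supplied by Theorem~\ref{representation-theorem}. First I would observe that the cross relations \eqref{representation-relation6-6-6-1}--\eqref{representation-relation6-7-7-1} are of smash-product type: if $\mathcal{L}$ is equipped with the $\mathbb{Z}$-linear operators
\begin{equation*}
s_i(X^\mu)=X^{s_i(\mu)},\qquad D_i(X^\mu)=\frac{X^\mu-X^{s_i(\mu)}}{1-X^{-\alpha_i^\vee}},
\end{equation*}
then using the coproduct $\Delta(s_i)=s_i\otimes s_i$, $\Delta(D_i)=D_i\otimes 1+s_i\otimes D_i$ of Theorem~\ref{hopf-algebra-structure}, the generic smash-product identity $h\cdot a=\sum h_{(1)}(a)\cdot h_{(2)}$ specializes exactly to \eqref{representation-relation6-6-6-1} when $h=s_i$ and to \eqref{representation-relation6-7-7-1} when $h=D_i$.

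Second, I would check that $\mathcal{L}$ is genuinely an ${\bf H}(W_0)$-module algebra for the formulas above. The operator $D_i$ is well defined on $\mathcal{L}$ since the numerator is divisible by $1-X^{-\alpha_i^\vee}$; the twisted Leibniz rule $D_i(X^\lambda X^\mu)=D_i(X^\lambda)X^\mu+s_i(X^\lambda)D_i(X^\mu)$ is a one-line monomial computation; and compatibility with the remaining defining relations of ${\bf H}(W_0)$ is precisely Conjecture~\ref{introduction-conjecture1} applied to $\mathcal{L}$. This has been verified in the simply-laced case in [BK], and in the non-simply-laced crystallographic case by the rank-two results of Sections~2 and~3 combined with the parabolic reduction argument of Section~5.

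Third, I would assemble the algebra isomorphism $\widetilde{{\bf H}}(W)\cong\mathcal{L}\#{\bf H}(W_0)$. By the first step, the assignments $s_i\mapsto 1\#s_i$, $D_i\mapsto 1\#D_i$, $X^\lambda\mapsto X^\lambda\#1$ respect every defining relation in Definition~\ref{definition-gene}, producing a surjection $\Phi:\widetilde{{\bf H}}(W)\twoheadrightarrow\mathcal{L}\#{\bf H}(W_0)$. In the opposite direction, the subalgebras ${\bf H}(W_0)$ and $\mathcal{L}$ embed in $\widetilde{{\bf H}}(W)$ via the obvious generators, and the smash-product universal property yields the inverse homomorphism. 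Since the underlying $\mathbb{Z}$-module of $\mathcal{L}\#{\bf H}(W_0)$ is by construction $\mathcal{L}\otimes{\bf H}(W_0)$, composing with the PBW decomposition ${\bf H}(W_0)\cong{\bf D}(W_0)\otimes\mathbb{Z}[W_0]$ (which follows from Theorem~\ref{representation-theorem} by the straightening argument of [BK, Theorem 1.24]) yields exactly \eqref{isomorphism-zmodules}.

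The principal obstacle is the construction of the inverse to $\Phi$, or equivalently the injectivity of the multiplication map; this is where the consistency of the Hopf module-algebra structure on $\mathcal{L}$ really matters, and where the multi-term relations \eqref{representation-relation6-5-5-2}--\eqref{representation-relation6-5-5-7} must be shown to remain valid after all $X^\lambda$'s have been pushed to the right. Once Conjecture~\ref{introduction-conjecture1} has been established in every rank-two parabolic subcase---exactly what Sections~2 and~3 of this paper accomplish---the remainder of the argument is formal smash-product bookkeeping combined with the known PBW for ${\bf H}(W_0)$.
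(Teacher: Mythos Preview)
The paper states Theorem~\ref{theorem-pbw-basis} without proof (Section~6 lists it alongside Theorems~\ref{theorem-injective-homomor} and~\ref{theorem-modules-existence}, none of which is given an argument), so there is no proof in the paper to compare your proposal against. That said, your smash-product strategy is the natural one and is essentially correct: the cross relations \eqref{representation-relation6-6-6-1}--\eqref{representation-relation6-7-7-1} are exactly the smash-product commutation rules for the coproduct of Theorem~\ref{hopf-algebra-structure}, and once $\mathbb{Z}[P^\vee]$ is known to be an ${\bf H}(W_0)$-module algebra the identification $\widetilde{{\bf H}}(W)\cong\mathbb{Z}[P^\vee]\,\#\,{\bf H}(W_0)$ follows by matching presentations. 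Combined with the factorisation ${\bf H}(W_0)\cong{\bf D}(W_0)\otimes\mathbb{Z}[W_0]$ from~[BK] this yields~\eqref{isomorphism-zmodules}.

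Two points to tighten. First, Conjecture~\ref{introduction-conjecture1} is literally stated for $\mathcal{L}_I=\mathbb{Z}[t_i^{\pm1}]_{i\in I}$, the group algebra of the coroot lattice, whereas you need the module-algebra structure on the larger ring $\mathbb{Z}[P^\vee]$. The gap is harmless: what Lemmas~\ref{b2-lemma1} and~\ref{G2-lemma1} actually establish is that the generators of $\underline{\bf J}(W_0)$ vanish under $\hat p_{W_0}$ into $\mathcal{Q}\rtimes\mathbb{Z}W_0$, and since the operators $\tfrac{1}{1-X^{-\alpha_i^\vee}}(1-s_i)$ already lie in $\mathrm{Frac}(\mathbb{Z}[Q^\vee])\rtimes\mathbb{Z}W_0\subset\mathrm{Frac}(\mathbb{Z}[P^\vee])\rtimes\mathbb{Z}W_0$, the same vanishing holds over the larger ring, while the usual divisibility argument shows the Demazure operators preserve $\mathbb{Z}[P^\vee]$. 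Second, your smash product $\mathcal{L}\#{\bf H}(W_0)$ naturally places $\mathbb{Z}[P^\vee]$ on the left, whereas the statement orders the tensor factors with $\mathbb{Z}[P^\vee]$ on the right; the relations \eqref{representation-relation6-6-6-1}--\eqref{representation-relation6-7-7-1} allow straightening in either direction, so this is only a cosmetic adjustment.
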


The following theorem is the main result of this section.
\begin{theorem}\label{theorem-injective-homomor}
The assignments
\begin{align*}
T_{i}\mapsto s_{i}+(1-q_{i})D_{i} ~~~\text{ and }~~~X^{\lambda}\mapsto X^{\lambda},
\end{align*}
for $i\in I$ and $\lambda\in P^{\vee},$ define an injective homomorphism of $\Bbbk$-algebras $\underline{\varphi}_{W} :H_{\underline{\bf q}}(W)\rightarrow \Bbbk\otimes_{\mathbb{Z}} \widetilde{{\bf H}}(W).$
\end{theorem}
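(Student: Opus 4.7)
The plan is to verify first that the substitutions $\widetilde{T}_i := s_i + (1-q_i)D_i$ and $X^\lambda \mapsto X^\lambda$ respect the four families of defining relations of $H_{\underline{\bf q}}(W)$ listed in Definition \ref{definition-affine-Heck-algs}, and then to deduce injectivity from the PBW isomorphism of Theorem \ref{theorem-pbw-basis}.

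The quadratic relation $(\widetilde{T}_i-1)(\widetilde{T}_i+q_i)=0$ falls out of a one-line expansion of $\widetilde{T}_i^2$ using $s_i^2=1$, $D_i^2=D_i$ and $s_iD_i+D_is_i=s_i-1$. The commutation $X^\lambda X^{\lambda'}=X^{\lambda'}X^\lambda$ is tautological in $\widetilde{\bf H}(W)$ since $P^\vee$ is abelian. For the Bernstein--Lusztig relation I would simply compute
\begin{align*}
\widetilde{T}_i X^\lambda &= s_iX^\lambda + (1-q_i)D_iX^\lambda \\
&= X^{s_i(\lambda)}s_i + (1-q_i)\Bigl(X^{s_i(\lambda)}D_i + \frac{X^\lambda - X^{s_i(\lambda)}}{1-X^{-\alpha_i^\vee}}\Bigr) \\
&= X^{s_i(\lambda)}\widetilde{T}_i + (1-q_i)\frac{X^\lambda - X^{s_i(\lambda)}}{1-X^{-\alpha_i^\vee}},
\end{align*}
where the second equality uses the mixed relations \eqref{representation-relation6-6-6-1} and \eqref{representation-relation6-7-7-1}.

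The main obstacle is the braid relation $\underbrace{\widetilde{T}_i\widetilde{T}_j\cdots}_{m_{ij}} = \underbrace{\widetilde{T}_j\widetilde{T}_i\cdots}_{m_{ij}}$. For $m_{ij}\in\{2,3\}$ this is the content of the embedding $H_q(W_0)\hookrightarrow{\bf H}(W_0)$ due to Berenstein--Kazhdan; for $m_{ij}\in\{4,6\}$ it is a consequence of Theorem \ref{representation-theorem} of the present paper. The approach is to expand both sides into $2^{m_{ij}}$ monomials in $s_i,s_j,D_i,D_j$ weighted by powers of $(1-q_i),(1-q_j)$ and to match the coefficient of each such power using the explicit Hecke--Hopf relations from Sections 2--4 (notably \eqref{B-relation5}--\eqref{B-relation6} for the $B_2$ case and the identities in Theorem \ref{theoremg2} for the $G_2$ case). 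No new identity is needed; the task is a bookkeeping comparison that has already been streamlined by the rank-two analyses of the earlier sections.

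For injectivity, I would use the standard PBW basis $\{T_w X^\lambda : w\in W_0,\ \lambda\in P^\vee\}$ of $H_{\underline{\bf q}}(W)$ together with the PBW isomorphism of Theorem \ref{theorem-pbw-basis}. Filter $\widetilde{\bf H}(W)$ by total degree in the generators $D_i$; then for any reduced expression $w=s_{i_1}\cdots s_{i_\ell}$ the image $\underline{\varphi}_W(T_wX^\lambda)=\widetilde{T}_{i_1}\cdots\widetilde{T}_{i_\ell}X^\lambda$ equals $wX^\lambda$ plus terms of strictly positive $D$-degree. Since $\{wX^\lambda\}_{w\in W_0,\,\lambda\in P^\vee}$ is linearly independent in the degree-zero component $\mathbb{Z}[W_0]\otimes\mathbb{Z}[P^\vee]$, the triangular structure forces $\{\widetilde{T}_wX^\lambda\}$ to be linearly independent in $\widetilde{\bf H}(W)$, giving injectivity of $\underline{\varphi}_W$.
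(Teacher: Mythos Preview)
The paper states Theorem~\ref{theorem-injective-homomor} without proof, so there is nothing to compare your argument against. Your outline is sound and would serve as a proof. A few remarks:

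Your verification of relations (1), (3), (4) is correct as written. For the braid relations you are right that nothing beyond the rank-two identities of Sections~2--4 is required; in fact the statement that $T_i\mapsto s_i+(1-q_i)D_i$ defines an embedding $H_{\underline{\bf q}}(W_0)\hookrightarrow\Bbbk\otimes{\bf H}(W_0)$ is exactly [BK, Theorem~1.4] in the simply-laced case, and its non-simply-laced extension is what Theorems~\ref{theoremb1} and~\ref{theoremg2} of the present paper supply. So you could shorten this step by citing that embedding directly rather than re-expanding the $2^{m_{ij}}$ monomials.

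For injectivity, your triangularity argument is correct but deserves one clarification. The filtration on ${\bf D}(W_0)$ by the number of $D_s$-factors is only a filtration, not a grading, so ``strictly positive $D$-degree'' should be read as ``${\bf D}(W_0)$-component lying in the augmentation ideal $\ker\varepsilon$''. The inductive step then goes through cleanly: writing $\widetilde{T}_w=w+\sum_a d_a w_a$ with $d_a\in\ker\varepsilon$, one checks that $\widetilde{T}_w\widetilde{T}_i=ws_i+(\text{terms with }{\bf D}\text{-part in }\ker\varepsilon)$ using the conjugation rule $w'D_i=(D_{w's_i(w')^{-1}})w'$ or $(1-D_{w's_i(w')^{-1}})w'$ from~\eqref{b2-module-algebra}, together with the fact that $\varepsilon$ is multiplicative. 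The key point for the leading term is that $\ell(ws_i)>\ell(w)$ forces $wD_iw^{-1}=D_{ws_iw^{-1}}$ with no ``$1-$''.

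An equivalent and slightly cleaner route for injectivity is to observe that, under the two PBW decompositions (Bernstein's for $H_{\underline{\bf q}}(W)$ and Theorem~\ref{theorem-pbw-basis} here), the linear map $\underline{\varphi}_W$ is identified with $\varphi_{W_0}\otimes_\Bbbk\mathrm{id}_{\Bbbk[P^\vee]}$, where $\varphi_{W_0}:H_{\underline{\bf q}}(W_0)\hookrightarrow\Bbbk\otimes{\bf H}(W_0)$ is the Berenstein--Kazhdan embedding just mentioned; injectivity is then immediate.
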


The following result shows the existence of finite-dimensional $\widetilde{{\bf H}}(W)$-modules.
\begin{theorem}\label{theorem-modules-existence}
We have a natural action of $\widetilde{{\bf H}}(W)$ on $\mathbb{Z}[P^{\vee}]$ such that
\begin{align*}
(X^{\lambda}w)(X^{\lambda'})=X^{\lambda}\cdot w(X^{\lambda'})
\end{align*}
for $\lambda, \lambda'\in P^{\vee}$ and $w\in W_{0},$
and
\begin{align*}
D_{i}(f)=\frac{f-{}^{s_{i}}f}{1-X^{-\alpha_{i}^{\vee}}}
\end{align*}
for $f\in \mathbb{Z}[P^{\vee}].$
\end{theorem}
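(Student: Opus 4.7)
The plan is to verify that the three families of operators on $\mathbb{Z}[P^{\vee}]$ described in the theorem—$X^{\lambda}$ acting by multiplication, $s_i$ by the Weyl-group reflection, and $D_i$ by the Demazure operator $f\mapsto (f-{}^{s_i}f)/(1-X^{-\alpha_i^{\vee}})$—satisfy every defining relation of $\widetilde{\mathbf{H}}(W)$ listed in Definition~\ref{definition-gene}. First I would check that $D_i$ preserves $\mathbb{Z}[P^{\vee}]$: on a monomial one has
\begin{align*}
D_i(X^{\lambda})=X^{\lambda}\,\frac{1-X^{-\langle\lambda,\alpha_i\rangle\alpha_i^{\vee}}}{1-X^{-\alpha_i^{\vee}}},
\end{align*}
which lies in $\mathbb{Z}[P^{\vee}]$ because $1-X^{-k\alpha_i^{\vee}}$ is divisible by $1-X^{-\alpha_i^{\vee}}$ for every $k\in\mathbb{Z}$.

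I would then dispose of the easy families of defining relations. The commutativity of $\mathbb{Z}[P^\vee]$ handles $X^\lambda X^{\lambda'}=X^{\lambda'}X^\lambda$; the mixed relation $s_iX^\lambda=X^{s_i(\lambda)}s_i$ is immediate from the fact that $W_0$ acts on $\mathbb{Z}[P^\vee]$ by algebra automorphisms; and the twisted Leibniz relation $D_iX^\lambda=X^{s_i(\lambda)}D_i+(X^\lambda-X^{s_i(\lambda)})/(1-X^{-\alpha_i^{\vee}})$ is a one-line computation, since evaluating both sides on any $f\in\mathbb{Z}[P^\vee]$ produces the common value $(X^\lambda f-X^{s_i(\lambda)}\,{}^{s_i}f)/(1-X^{-\alpha_i^{\vee}})$. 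The remaining defining relations \eqref{representation-relation6-1-1}--\eqref{representation-relation6-5-5-7} involve only $s_i$ and $D_i$ and are precisely the defining relations of the Hecke-Hopf algebra $\mathbf{H}(W_0)$ in Theorem~\ref{representation-theorem}.

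I would obtain this last family from an algebra homomorphism $p_{W_0}\colon\mathbf{H}(W_0)\to\mathcal{Q}\rtimes\mathbb{Z}W_0$ sending $s_i\mapsto s_i$ and $D_i\mapsto(1-s_i)/(1-X^{-\alpha_i^{\vee}})$, where $\mathcal{Q}$ denotes the fraction field of $\mathbb{Z}[P^\vee]$. By [BK, Proposition 7.42] this assignment already factors through $\hat{\mathbf{H}}(W_0)$, so it suffices to show that every generator of $\mathbf{K}_{ij}(W_0)$ maps to zero. For each pair $(i,j)$, any such generator is a polynomial expression in $s_i,s_j,\tau_i:=(1-X^{-\alpha_i^{\vee}})^{-1}$, and $\tau_j:=(1-X^{-\alpha_j^{\vee}})^{-1}$, and therefore lies in the rank-$2$ skew-group subalgebra of $\mathcal{Q}\rtimes\mathbb{Z}W_0$ generated by these four elements. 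Under the identification $t_i\leftrightarrow X^{-\alpha_i^{\vee}}$—which is compatible with the Weyl-group action because $s_i(X^{-\alpha_j^{\vee}})=X^{-\alpha_j^{\vee}}\cdot X^{a_{ij}\alpha_i^{\vee}}=t_j\cdot t_i^{-a_{ij}}$—this subalgebra becomes the setup of Sections 2 and 3. The required vanishing then follows case by case: from [BK] when $m_{ij}\in\{2,3\}$, from Corollary~\ref{b2-corollary-module} when $m_{ij}=4$, and from Corollary~\ref{G2-corollary-module} when $m_{ij}=6$. Since $\mathbb{Z}[P^\vee]\subset\mathcal{Q}$ is stable under $s_i$ and $D_i$, the resulting $\mathbf{H}(W_0)$-action restricts to $\mathbb{Z}[P^\vee]$, and combining this with the easy families above gives the desired $\widetilde{\mathbf{H}}(W)$-action. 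The principal obstacle is exactly this rank-$2$ reduction, i.e.\ verifying that the operator $(1-s_i)/(1-X^{-\alpha_i^{\vee}})$ on $\mathbb{Z}[P^\vee]$ matches the Demazure-type action of Sections 2--4 under the change of variable $t_i\leftrightarrow X^{-\alpha_i^{\vee}}$; once this dictionary is in place, the nontrivial case-by-case computations have already been recorded in those sections.
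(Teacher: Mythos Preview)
The paper states Theorem~\ref{theorem-modules-existence} without proof, so there is nothing to compare against directly. Your argument is correct and is exactly the route the paper's earlier material is set up to support: the relations \eqref{representation-relation6-6-6-1}--\eqref{representation-relation6-7-7-1} involving $X^{\lambda}$ are immediate, and the remaining relations among the $s_i$ and $D_i$ are precisely the defining relations of $\mathbf{H}(W_0)$, whose validity under the Demazure assignment $D_i\mapsto(1-s_i)/(1-X^{-\alpha_i^{\vee}})$ is what the paper calls the ``Verification of Conjecture~\ref{introduction-conjecture1}'' in Section~5, resting on Corollaries~\ref{b2-corollary-module} and~\ref{G2-corollary-module} together with the simply-laced case from [BK]. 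Your dictionary $t_i\leftrightarrow X^{-\alpha_i^{\vee}}$ and the observation that each rank-$2$ identity from Lemmas~\ref{b2-lemma1} and~\ref{G2-lemma1} transfers verbatim (since those computations use only the $\langle s_i,s_j\rangle$-action on $t_i,t_j$, which matches the action on $X^{-\alpha_i^{\vee}},X^{-\alpha_j^{\vee}}$) is the right way to make this explicit.
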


\noindent{\bf Acknowledgements.}
The author sincerely thanks Dr. Junbin Dong for his help. The author was partially supported by the National Natural Science Foundation of China (No. 11601273).



\end{document}